\newtheorem{theorem}{Theorem}
\newtheorem{lemma}[theorem]{Lemma}
\newtheorem{definition}[theorem]{Definition}
\newtheorem{proposition}[theorem]{Proposition}
\newtheorem{remark}[theorem]{Remark}
\numberwithin{theorem}{section}
\numberwithin{equation}{section}
\newcommand{\mint}{- \mskip-19,5mu \int}
\def\N{\mathbb{N}}
\def\R{\mathbb{R}}
\def\bH{\boldsymbol{\mathcal H}}
\def\bC{{\mathcal C}}
\def\bQ{{\mathcal Q}}
\newcommand{\dHn}{\mathrm{d}\mathcal{H}^{n-1}}
\renewcommand{\d}{\mathrm{d}}
\newcommand{\dx}{\mathrm{d}x}
\newcommand{\dy}{\mathrm{d}y}
\newcommand{\dt}{\mathrm{d}t}
\newcommand{\ds}{\mathrm{d}s}
\newcommand{\dtau}{\mathrm{d}\tau}
\newcommand{\dr}{\mathrm{d}r}
\renewcommand{\epsilon}{\varepsilon}
\DeclareMathOperator{\spt}{spt}
\DeclareMathOperator{\trace}{trace}
\DeclareMathOperator{\Div}{div}
\DeclareMathOperator{\diam}{diam}
\DeclareMathOperator{\dist}{dist}
\DeclareMathOperator{\loc}{loc}
\renewcommand{\epsilon}{\varepsilon}
\newcommand{\eps}{\varepsilon}
\renewcommand{\rho}{\varrho}
\def\eqn#1$$#2$${\begin{equation}\label#1#2\end{equation}}
\newcommand{\wto}{\rightharpoondown}
\newcommand{\wsto}{\overset{\raisebox{-1ex}{\scriptsize $*$}}{\rightharpoondown}}
\newcommand{\w}{\hat u}
\def\Xint#1{\mathchoice
    {\XXint\displaystyle\textstyle{#1}}%
    {\XXint\textstyle\scriptstyle{#1}}%
    {\XXint\scriptstyle\scriptscriptstyle{#1}}%
    {\XXint\scriptscriptstyle\scriptscriptstyle{#1}}%
    \!\int}
\def\XXint#1#2#3{\setbox0=\hbox{$#1{#2#3}{\int}$}
    \vcenter{\hbox{$#2#3$}}\kern-0.5\wd0}
\def\bint{\Xint-}
\def\dashint{\Xint{\raise4pt\hbox to7pt{\hrulefill}}}
\def\Xiint#1{\mathchoice
    {\XXiint\displaystyle\textstyle{#1}}%
    {\XXiint\textstyle\scriptstyle{#1}}%
    {\XXiint\scriptstyle\scriptscriptstyle{#1}}%
    {\XXiint\scriptscriptstyle\scriptscriptstyle{#1}}%
    \!\iint}
\def\XXiint#1#2#3{\setbox0=\hbox{$#1{#2#3}{\iint}$}
    \vcenter{\hbox{$#2#3$}}\kern-0.5\wd0}
\def\biint{\Xiint{-\!-}}
\subjclass[2010]{35B65, 35K65, 35K40, 35K55}
\keywords{}
\author[V. B\"ogelein]{Verena B\"{o}gelein}
\address{Verena B\"ogelein\\
Fachbereich Mathematik, Universit\"at Salzburg\\
Hellbrunner Str. 34, 5020 Salzburg, Austria}
\email{verena.boegelein@sbg.ac.at}
\author[F. Duzaar]{Frank Duzaar}
\address{Frank Duzaar\\
Department Mathematik, Universit\"at Erlangen--N\"urnberg\\
Cauerstrasse 11, 91058 Erlangen, Germany}
\email{frank.duzaar@fau.de}
\author[N. Liao]{Naian Liao}
\address{Naian Liao\\
Fachbereich Mathematik, Universit\"at Salzburg\\
Hellbrunner Str. 34, 5020 Salzburg, Austria}
\email{naian.liao@sbg.ac.at}
\author[C. Scheven]{Christoph Scheven}
\address{Christoph Scheven\\ Fakult\"at f\"ur Mathematik, 
Universit\"at Duisburg-Essen\\45117 Essen, Germany}
\email{christoph.scheven@uni-due.de}
\begin{document}
\newcommand{\cl}{\centerline}
\newcommand{\sms}{\smallskip}
\newcommand{\ms}{\medskip}
\newcommand{\bs}{\bigskip}
\newcommand{\noi}{\noindent}
\newcommand{\itl}[1]{\textit{#1}}
\newcommand{\blf}[1]{\textbf{#1}}
\newcommand{\dsty}{\displaystyle}
\newcommand{\txty}{\textstyle}
\newcommand{\ssty}{\scriptstyle}
\newcommand{\tty}{\texttt}


\newcommand\Par{\mathhexbox278\,}


\newcommand{\al}{\alpha}
\newcommand{\Al}{\Alpha}
\newcommand{\be}{\beta}
\newcommand{\Be}{\Beta}
\newcommand{\Gm}{\Gamma}
\newcommand{\gm}{\gamma}
\newcommand{\dl}{\delta}
\newcommand{\Dl}{\Delta}
\newcommand{\lm}{\lambda}
\newcommand{\Lm}{\Lambda}
\newcommand{\kp}{\kappa}
\newcommand{\varep}{\varepsilon}
\newcommand{\vp}{\varphi}
\newcommand{\sig}{\sigma}
\newcommand{\Sig}{\Sigma}
\newcommand{\om}{\omega}
\newcommand{\Om}{\Omega}
\newcommand{\uom}{\mbox{\boldmath$\omega$}}
\newcommand{\btau}{\mbox{\boldmath$\tau$}}
\newcommand{\bnu}{\mbox{\boldmath$\nu$}}
\newcommand{\up}{\upsilon}
\newcommand{\z}{\zeta}


\newcommand{\df}[1]{\buildrel\mbox{\small def}\over{#1}}
\newcommand{\op}[1]{\buildrel\mbox{\tiny o}\over{#1}}
\newcommand{\db}{\prime\prime}
\newcommand{\bsl}{\backslash}
\newcommand{\lb}{\lbrack\!\lbrack}
\newcommand{\rb}{\rbrack\!\rbrack}
\newcommand\la{\langle}
\newcommand\ra{\rangle}
\newcommand{\ev}{\equiv}
\newcommand{\nev}{\not\equiv}
\newcommand{\qq}{\mathbb{Q}}
\newcommand{\zz}{\mathbb{Z}}
\newcommand{\rr}{\mathbb{R}}
\newcommand{\rn}{\rr^N}
\newcommand{\cc}{\mathbb{C}}
\newcommand{\id}{\mathbb{I}}
\newcommand{\bo}{\mathbb{O}}

\newcommand{\amsb}[1]{\mathbb{#1}}
\newcommand{\mcl}[1]{\mathcal{#1}}
\newcommand{\bl}[1]{\mathbf{#1}}
\newcommand{\ov}[1]{\overline{#1}}
\newcommand{\wt}[1]{\widetilde{#1}}
\newcommand{\wh}[1]{\widehat{#1}}

\newcommand{\llra}{\leftrightarrow}
\newcommand{\lra}{\longrightarrow}
\newcommand{\LLR}{\Longleftrightarrow}
\newcommand{\LRA}{\Longrightarrow}
\newcommand{\LLA}{\Longleftarrow}


\newcommand{\bbox}{\vrule height.6em width.6em 
depth0em} 
\newcommand{\os}{\vbox{\hrule \hbox{\vrule 
height.6em depth0pt 
\hskip.6em \vrule height.6em depth0em}
\hrule}} 


\newcommand{\dvg}{\operatorname{div}}
\newcommand{\curl}{\operatorname{curl}}
\newcommand{\supp}{\operatorname{supp}}
\newcommand{\essup}{\operatornamewithlimits{ess\,sup}}
\newcommand{\essinf}{\operatornamewithlimits{ess\,inf}}
\newcommand{\essosc}{\operatornamewithlimits{ess\,osc}}
\newcommand{\osc}{\operatornamewithlimits{osc}}
\newcommand{\sign}{\operatorname{sign}}
%


\newcommand{\overlim}{\mathop{\overline{\lim}}\limits}
\newcommand{\underlim}{\mathop{\underline{\lim}}\limits}
\newcommand{\ttop}[2]{\genfrac{}{}{0pt}{}{#1}{#2}}
\newcommand{\bcu}{\mathop{\txty{\bigcup}}\limits}
\newcommand{\bca}{\mathop{\txty{\bigcap}}\limits}
\newcommand{\bsu}{\mathop{\txty{\sum}}\limits}
\newcommand{\pro}{\mathop{\txty{\prod}}\limits}


\newcommand{\pl}{\partial}
\newcommand{\ptt}{\frac{\pl}{\pl t}}
\newcommand{\ppx}{\frac\pl{\pl x}}
\newcommand{\dds}{\frac d{ds}}
\newcommand{\ddt}{\frac d{dt}}

\newcommand{\intl}{\int\limits}
\newcommand{\iintl}{\iint\limits}
\def\Xint#1{\mathchoice
    {\XXint\displaystyle\textstyle{#1}}%
    {\XXint\textstyle\scriptstyle{#1}}%
    {\XXint\scriptstyle\scriptscriptstyle{#1}}%
    {\XXint\scriptscriptstyle\scriptscriptstyle{#1}}%
    \!\int}
\def\XXint#1#2#3{\setbox0=\hbox{$#1{#2#3}{\int}$}
    \vcenter{\hbox{$#2#3$}}\kern-0.5\wd0}
\def\bint{\Xint-}
\def\dashint{\Xint{\raise4pt\hbox to7pt{\hrulefill}}}
\def\dashiint{\bint\kern-0.15cm\bint}

\newcommand{\ovl}[3]{\int_{#1}^{#2}\kern-#3pt\raise4pt\hbox to7pt{\hrulefill}\ }

\newcommand{\ovll}[3]{\intl_{#1}^{#2}\kern-#3pt\raise4pt\hbox to7pt{\hrulefill}\ }

\newcommand{\tvl}[2]{\iint_{#1}\kern-#2pt\raise4pt\hbox to7pt{\hrulefill}\ }



\newcommand{\omt}{\Om_T}
\newcommand{\plo}{\partial\Omega}
\newcommand{\ovo}{\bar{\Om} }

%
\newcommand{\ci}[1]{C^\infty\!\left({#1}\right)}
\newcommand{\cio}[1]{C_o^\infty\!\left({#1}\right)}
\newcommand{\lloc}[1]{L_{\loc}\!\left({#1}\right)}
\newcommand{\xy}{|x-y|}


\newcommand{\intom}{\intl_{\Om}}
\newcommand{\intbo}{\intl_{\plo}}
\newcommand{\inom}{\int_{\Om}}
\newcommand{\inbo}{\int_{\plo}}
\newcommand{\intrn}{\intl_{\rn}}


\newcommand{\bye}{

\title[Boundary regularity for parabolic systems]{
Boundary regularity  for parabolic systems\\ in convex domains}
\date{\today}

\begin{abstract}
In a cylindrical space-time domain with a convex, spatial base,
we establish a local Lipschitz estimate for weak solutions to parabolic systems with Uhlenbeck structure up to the lateral boundary,
provided homogeneous Dirichlet data are assumed on that part of the lateral boundary.
\end{abstract}
\maketitle
\tableofcontents

\section{Introduction}
This paper studies boundary regularity of  weak solutions $u\colon \Omega_T\to\R^N$, $N\ge 1$, to  nonlinear parabolic systems of the type
\begin{equation}\label{system}
    \partial_t u^i - \sum_{\alpha=1}^{n}
    \big[ \mathbf a\big(|Du|\big) u^i_{x_{\alpha}}\big] _{x_{\alpha}}=b^i \quad
    \mbox{for $i=1,\dots, N$,}
\end{equation} 
in a space-time cylinder $\Omega_T=\Omega\times (0,T)$, where
$\Omega\subset\R^n$ is a bounded open convex
set, $n\ge 2$ and $T>0$.
We assume that $u$ satisfies a homogeneous Dirichlet boundary condition
on some part of the lateral boundary $(\partial\Omega)_T=\partial\Omega\times (0,T)$.
The nonlinearity
$\mathbf a\colon\R_{\ge 0}\to\R_{\ge 0}$   fulfills a growth condition of the type
$\mathbf a (r)+r\mathbf a'(r)\approx r^{p-2}$ for some growth
exponent $p>\frac{2n}{n+2}$. As such the diffusion part in \eqref{system}
is said to have the Uhlenbeck structure. For the  right-hand side we require $b\in L^\sigma (\Omega_T,\R^N)$
for some $\sigma >n+2$.

The primary purpose of this paper is to establish 
$$
	Du\in L^\infty \big( \Omega_T\cap Q_{\rho}(z_o),\R^{Nn}\big)
$$
whenever $u$ is a weak solution to the system \eqref{system}
satisfying $u\equiv 0$ on the subset $(\partial\Omega)_T\cap Q_{2\rho}(z_o)$ of
the lateral boundary. Here $Q_\varrho(z_o):=B_{\varrho}(x_o)\times(t_o-\rho^2,t_o)$
for some $z_o=(x_o,t_o)\in(\pl\Om)_T$. We only require that  $\Omega$ is a bounded open convex set.
 No further regularity of $\partial\Omega$ is assumed.
The qualitative assertion is confirmed by a quantitative
$L^\infty$-estimate for the spatial gradient $Du$.

Regularity problems for nonlinear equations or systems of the parabolic $p$-Laplacian type and their stationary counterparts
were very difficult to access in the past. The interior $C^{1,\lambda}$-regularity had been  longstanding open problems. 
The first major breakthrough was achieved  by Uraltseva \cite{Uraltseva} in 1968. She showed that solutions to $p$-Laplacian equations, whose model is given by
\begin{equation}\label{i:p-Laplace}
	-\Div\big( |Du|^{p-2}Du\big)=0\qquad\mbox{in $\Omega$,}
\end{equation}
are of class $C^{1,\lambda}$ in the interior of the domain $\Omega\subset\R^n$. 
This result was generalized  in 1977 by Uhlenbeck in her famous paper \cite{Uhlenbeck} to the $p$-Laplacian type systems
(i.e. elliptic version of \eqref{system})
\begin{equation}\label{i:p-Laplace-type}
	-
	\sum_{\alpha=1}^{n}
    	\big[ \mathbf a\big(|Du|\big) u^i_{x_{\alpha}}\big] _{x_{\alpha}}=0
	\quad
    	\mbox{for $i=1,\dots, N$.}
\end{equation}
 More general structures, replacing $|Du|^2$ by some quadratic expression
$Q(Du,Du)$ and including a sufficiently regular dependence on $x\in\Omega$,  have been considered by Tolksdorf \cite{Tolksdorf:1983}. Roughly speaking, in the weak formulation of  \eqref{i:p-Laplace-type} the nonlinear term $\mathbf a\big(|Du|\big)Du$ is replaced by $\mathbf a\big(x, Q(Du, Du)^\frac12\big)Q(Du,\,\cdot \,)$.
Similar $C^{1,\lambda}$-regularity results have been shown for
minimizers of  integral functionals with $p$-growth. The degenerate
case with growth exponent  $p\ge 2$ goes back to Giaquinta \& Modica
\cite{GiaquintaModica:1986-a}, while the singular case $1<p<2$ was
treated by Acerbi \& Fusco \cite{AcerbiFusco}.  For systems of
$p$-Laplacian type as in \eqref{i:p-Laplace-type}, sharp pointwise interior gradient  bounds in terms of  a nonlinear potential of the right-hand side $b$ have been established in \cite{DuzaarMingione:2010}.

Regarding the boundary regularity for $p$-Laplacian type systems the picture is  less complete.
Global $C^{1,\lambda}$-regularity is known only for homogeneous Dirichlet and Neumann boundary data; see Hamburger \cite{Hamburger}.  For general boundary data, it is still an open problem. However, local $L^\infty$-gradient bounds (Lipschitz estimates at the boundary)
have been established  by Foss \cite{Foss} for minimizers to
asymptotically regular integral functionals on domains with $C^{1,\lambda}$-boundary; see also
\cite{Foss-Passarelli-Verde, Lieberman}.  Again for homogeneous
Dirichlet or Neumann data, 
global Lipschitz estimates (in terms of the right-hand side $b$ of \eqref{i:p-Laplace-type} and under minimal assumptions on the regularity
of $\partial \Omega$ and $b$) have been proved by Cianchi \& Maz'ya in \cite{ Cianchi-Mazya-2, Cianchi-Mazya-1}. These results are global in nature and only valid if $u$ or its outer normal derivative $\partial_\nu u$ vanishes on the whole boundary $\partial\Omega$.
It is noteworthy that their results hold for  convex domains in particular. In contrast to
these global results, Banerjee \& Lewis \cite{BanerjeeLewis:2014} established   
local boundary Lipschitz estimates with homogeneous data
for convex domains.  Their result is of local nature as
 they only require the homogeneous boundary condition 
 on a part 
 of the boundary. 
 Inspired by the technique introduced in \cite{BanerjeeLewis:2014}, Marcellini, 
the first two, and the last author were able to establish the first local boundary Lipschitz estimate for integral functionals with
non-standard $p,q$-growth; see \cite{BDMS}. 

The interior $C^{1,\lambda}$ regularity theory for the parabolic $p$-Laplacian type systems \eqref{system} is a fundamental achievement by  DiBenedetto \& Friedman; see
\cite{DiBenedetto-Friedman, DiBenedetto-Friedman2,
  DiBenedetto-Friedman3} and the monograph \cite[Chapters~VIII, IX, X]{DB};
see also Chen \cite{Chen} and Wiegner \cite{Wiegner}. 
For systems without Uhlenbeck structure of the type
\begin{equation}
	\partial_t u^i - \sum_{\alpha=1}^{n}
    \big[ \mathbf  a_\alpha^i(Du) \big] _{x_{\alpha}}=0 \quad
    \mbox{for $i=1,\dots, N$,}
\end{equation}
with a nonlinear diffusion  $\mathbf a$ that behaves asymptotically like the $p$-Laplacian
at the
origin, i.e.~$s^{1-p}\mathbf a(s\xi)\to |\xi|^{p-2}\xi$ in the limit $s\downarrow 0$ for any $\xi$, partial $C^{1,\lambda}$-regularity has been established by B\"ogelein \& Duzaar \& Mingione \cite{BoeDuMin}. 

In contrast to the interior regularity  theory, the boundary regularity is largely an open problem. 
There were two results achieved by Chen \& DiBenedetto in \cite{Chen-DiBenedetto} for the parabolic systems with the Uhlenbeck structure
in $C^{1,\lambda}$-domains.
The first was about the H\"older continuity of a solution $u$ up to the lateral boundary with any H\"older exponent in $(0,1)$,
given sufficiently regular boundary data; see also \cite[Chapter X, Theorem 1.1]{DB}.
The second dealt with the H\"older continuity of $Du$ up to the lateral boundary, given homogeneous boundary data;
see  \cite[Chapter X, Theorem 1.2]{DB}. 
The results have been achieved by a  boundary flattening  procedure. This allows us, after freezing  the coefficients, to reduce the problem to the interior case via reflection along  the flat boundary. At this stage it is important that the transformed coefficients admit certain quantitative H\"older-regularity. In the course of the proof the authors established gradient sup-estimates 
for the model case of $p$-Laplacian systems with homogeneous Dirichlet data when the boundary is flat; see  \cite[Propositions 3.1, 3.1']{Chen-DiBenedetto}. These estimates serve as reference inequalities when comparing the solution with the one to the frozen system.
This is why $\partial\Omega$ and $\mathbf g$ are assumed to be $C^{1,\lambda}$. This approach fails in the case of Lipschitz domains.  

Boundary regularity for more general parabolic systems has been considered by the first author in \cite{B-boundary}. The main result ensures the boundedness up to the lateral boundary of the spatial derivative of weak solutions to  asymptotically regular parabolic systems. Roughly speaking this means that the $C^1$-coefficients of the diffusion part behave like the $p$-Laplacian when $|Du|$ becomes large.  The result holds true for  inhomogeneous boundary values. As in \cite{Chen-DiBenedetto}, the proof relies  on a  boundary flattening  procedure and comparison arguments. Therefore, $\partial\Omega$ and $\mathbf g$ have to be of class $C^{1,\lambda}$.

\subsection{Statement of the result}

We assume that the nonlinearity  $\mathbf a\colon\R_{\ge0}\to\R_{\ge0}$ is 
 of class  $C^1(\R_{>0},\R_{>0})$ and satisfies 
\begin{equation}
  \label{assumption:a(0)}
  \lim_{r\downarrow0}r\mathbf{a}(r)=0.
\end{equation}
Moreover, $\mathbf a$ fulfills a standard monotonicity and $p$-growth condition 
 \begin{equation}\label{assumption:a'}
  m(\mu^2+r^{2}) ^{\frac{p-2}{2}}
  \leq 
  \mathbf{a}(r)+r\mathbf{a}^{\prime }(r) 
  \leq 
  M(\mu^2+r^{2}) ^{\frac{p-2}{2}}
  \qquad\mbox{for all }r>0,
\end{equation}
with positive constants $0<m\le M$, some parameter $\mu\in[0,1]$, and  some growth exponent  
$\frac{2n}{n+2}<p<\infty$. Note that in the case $\mu>0$ the parabolic system 
\eqref{system} is non-degenerate, while for $\mu=0$ the diffusion part becomes either degenerate or singular at points with
$|Du|=0$.  For the inhomogeneity  $b=(b^1,\dots,b^N)\colon\Omega_T\to\R^N$ we assume the integrability condition
\begin{equation}
  \label{assumption:b}
 \mbox{$ b\in L^\sigma(\Omega_T,\R^N)\;$ for some $\sigma>n+2$.}
\end{equation}

\begin{definition}\label{def:weak-sol}\upshape
Assume that the nonlinearity $\mathbf a$  and the inhomogeneity $b$ satisfy the assumptions  \eqref{assumption:a(0)} -- \eqref{assumption:b}.  
A map $u \colon \Omega_T\to \R^N$ with
$$
	u\in C^0\big( [0, T ]; L^2 (\Omega ,\R^N)\big)\cap L^p\big(0,T;W^{1,p}(\Omega ,\R^N)\big)
$$	
is called a weak solution to the nonlinear parabolic system  \eqref{system}
if and only if 
\begin{equation}\label{weak-solution}
	\iint_{\Omega_T}\big[ u\cdot\varphi_t -\mathbf a(|Du|)Du\cdot D\varphi\big]\, \dx\dt
	=
	\iint_{\Omega_T}b\cdot\varphi\, \dx\dt
\end{equation}
for any test function $\varphi\in C^\infty_0(\Omega ,\R^N)$.\hfill$\Box$
\end{definition}

Throughout this article $d$ denotes the scaling deficit given by 
\begin{align}\label{def:deficit}
  d:=\left\{
    \begin{array}{cl}
      \frac p2,&\mbox{if $p\ge2$,}\\[1.2ex]
      \frac{2p}{p(n+2)-2n},& \mbox{if $\frac{2n}{n+2}<p<2$.}
    \end{array}
\right.
\end{align}
Now we can state our main result.

\begin{theorem}[$L^\infty$-gradient bound at the boundary]\label{thm:main}
Let $\Omega\subset\R^n$ be an open bounded convex set, and assume that the structural assumptions \eqref{assumption:a(0)} -- \eqref{assumption:b} are in force and let $u\in C^0([0,T];L^2(\Omega,\R^N))\cap
 L^p(0,T;W^{1,p}(\Omega,\R^N))$ be a weak solution to the parabolic
 system \eqref{system} in the sense of Definition~\ref{def:weak-sol}
 satisfying the homogeneous Dirichlet boundary condition
\begin{equation*}
    \mbox{$u\equiv0\;$
    on $(\partial\Omega)_T\cap Q_{2\rho}(z_o)$ in the sense of traces,}
\end{equation*}
where $z_o=(x_o,t_o)$ is a point with space center
$x_o\in\partial\Omega$ and time $t_o\in(0,T)$, and $\rho\in (0,\frac12\sqrt{t_o})$.
Then, we have 
$$
	Du\in L^\infty \big(\Omega_T\cap Q_{\rho/2}(z_o)\big).
$$
Moreover, the following quantitative $L^\infty$-gradient bound 
\begin{align*}
    \sup_{\Omega_T\cap Q_{\rho/2}(z_o)}|Du|
    \le
  	C\bigg[ \Big(1 + \rho^{n+2}\|b\|_{
  	L^\sigma(\Omega_T\cap Q_\rho(z_o))}^{\frac{(n+2)\sigma}{\sigma-n-2}}\Big)
  	\biint_{\Omega_T \cap Q_{\rho}(z_o)}\big(1+|Du|^p\big)
  	\,\dx\dt\bigg]^{\frac{d}{p}}
\end{align*}
holds with a constant $C$ depending on $n,N,p,\sigma,m,M$, and
the geometry of the boundary.
\end{theorem}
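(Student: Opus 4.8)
The plan is to carry over to the convex-domain, parabolic setting the interior gradient-boundedness argument of DiBenedetto \& Friedman \cite{DiBenedetto-Friedman, DB} (differentiate the system, show that a suitable energy density of $Du$ is a subsolution, then run an intrinsic-scaling De Giorgi iteration), combined with the observation of Banerjee \& Lewis \cite{BanerjeeLewis:2014} that the boundary integrals arising along the way acquire a favourable sign on a convex boundary carrying homogeneous Dirichlet data. After the parabolic rescaling $(x,t)\mapsto(x_o+\rho x,t_o+\rho^2t)$ one may assume $\rho=1$; the hypothesis $\rho<\tfrac12\sqrt{t_o}$ ensures that $Q_{2\rho}(z_o)$ does not reach the initial slice $\{t=0\}$, so only the lateral boundary enters. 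By a by-now standard approximation procedure --- regularizing $\Omega$ to a smooth, uniformly convex domain, replacing $\mathbf a$ by a smooth, non-degenerate nonlinearity still satisfying \eqref{assumption:a'} with the same $m,M$, and mollifying $b$ --- and since the $L^\infty$-bound to be proved is uniform in the regularization parameters, it suffices to establish the estimate for a solution that is smooth in the interior and, owing to the smoothness of the approximating domain and the non-degeneracy of the approximating nonlinearity, regular enough up to the lateral boundary (in particular $Du$ has a trace there and its spatial gradient is square-integrable up to the boundary) to justify the integrations by parts below; the general case then follows by passing to the limit, using weak lower semicontinuity and a.e.\ convergence of the gradients.

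Fixing such a solution, set $\mathbf v:=\mu^2+|Du|^2$. Differentiating \eqref{system} with respect to $x_\beta$, testing the differentiated system with functions of the form $\zeta^2u^i_{x_\beta}(\mathbf v-\ell)_+$ and summing over $i$ and $\beta$ --- here the Uhlenbeck structure \cite{Uhlenbeck} is essential --- one arrives at Caccioppoli inequalities for $(\mathbf v-\ell)_+$ on the half-cylinders $Q_r(z_o)\cap\Omega_T$ expressing that $\mathbf v$ is a weak subsolution, up to the lateral boundary, of a parabolic equation of $p$-Laplacian type with principal part comparable to $\mathbf v^{(p-2)/2}$ and right-hand side controlled by $\mathbf v$, $|b|$ and $|Db|$. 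The decisive point is the boundary integral over $(\partial\Omega)_T\cap Q_1(z_o)$ produced by these integrations by parts, of the schematic form $\int(\mathcal A\,D\mathbf v)\cdot\nu\,\zeta^2(\mathbf v-\ell)_+$ (with a matrix $\mathcal A$ comparable to $\mathbf v^{(p-2)/2}$) together with lower-order terms: since $u\equiv0$ on the lateral boundary, its tangential derivatives and $\partial_tu$ vanish there, so $Du=\partial_\nu u\otimes\nu$ on $(\partial\Omega)_T$, and computing $\partial_\nu\mathbf v=\partial_\nu|Du|^2$ in coordinates that flatten $\partial\Omega$ brings in the second fundamental form of $\partial\Omega$ with precisely the sign which, by convexity, forces $\partial_\nu\mathbf v\le0$, up to terms that are either absorbed on the right-hand side or dominated by $|b|$. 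Hence the boundary integrals are nonpositive (or harmless) and may be discarded, so that for the purpose of a supremum bound $\mathbf v$ behaves on $Q_r(z_o)\cap\Omega_T$, $\tfrac12\le r\le1$, exactly like an interior subsolution.

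The remaining step is the boundary counterpart of the interior sup-estimate: distinguishing the degenerate range $p\ge2$ and the singular range $\tfrac{2n}{n+2}<p<2$, one performs DiBenedetto's intrinsic-scaling De Giorgi iteration on the nested half-cylinders interpolating between $Q_1(z_o)\cap\Omega_T$ and $Q_{1/2}(z_o)\cap\Omega_T$. The scaling deficit $d$ of \eqref{def:deficit} is exactly the correction accounting for the lack of homogeneity of the equation under parabolic rescaling, while the hypothesis $\sigma>n+2$ in \eqref{assumption:b} supplies, via the parabolic Sobolev inequality, the smallness of the forcing term needed for the iteration to converge and produces the factor $1+\rho^{n+2}\|b\|_{L^\sigma(\Omega_T\cap Q_\rho(z_o))}^{(n+2)\sigma/(\sigma-n-2)}$ in the statement. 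This yields the claimed bound on $Q_{1/2}(z_o)\cap\Omega_T$ with a constant independent of the regularization; undoing the rescaling and passing to the limit finishes the proof.

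I expect the main obstacle to be the boundary-term analysis of the second step: verifying rigorously, for smooth uniformly convex domains, that the outer conormal derivative of the energy density $\mathbf v$ has the correct sign, and that none of the lower-order contributions generated by differentiating the system --- those stemming from $\partial_tu$, from the inhomogeneity $b$, and from the genuine nonlinearity $\mathbf a$ rather than the pure power $|Du|^{p-2}$ --- spoils this sign on $(\partial\Omega)_T$. A secondary technical difficulty, as always in the singular regime $p<2$, is to run the intrinsic-scaling machinery near the lateral boundary while keeping every constant independent of the regularization parameter $\mu$ and of the approximating domains, which requires the intrinsic cylinders in the iteration to be adapted to $\mathbf v$ on $Q_r(z_o)\cap\Omega_T$ rather than to interior cylinders.
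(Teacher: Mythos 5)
Your overall architecture matches the paper's: approximate by smooth convex domains and a smooth non-degenerate nonlinearity, exploit convexity and the homogeneous Dirichlet datum to give the boundary terms a sign (your flattening computation of $\partial_\nu|Du|^2$ is exactly what the paper formalizes through Grisvard's identity, Lemma~\ref{lem:Grisvard}, combined with the equation itself on the boundary, which is why the paper also needs $\spt b_\eps\Subset\Omega_\eps\times\R$), and then iterate. Your choice of a De Giorgi/intrinsic-scaling iteration on truncations $(\mathbf v-\ell)_+$ instead of the paper's Moser iteration on powers of $\sqrt{\mu^2+|Du|^2}$ is a legitimate variant: the paper's energy estimate (Proposition~\ref{prop:energy}) is stated for arbitrary non-negative increasing weights $\boldsymbol\Phi$, so truncations fit the same framework, and DiBenedetto's interior theorems~5.1, 5.2' of \cite{DB}, which the final bound mirrors, are themselves proved by De Giorgi iteration. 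Two caveats on this part: your Caccioppoli right-hand side involves $|Db|$, but only $b\in L^\sigma$ is assumed; after mollifying $b$ you must still arrange the estimate so that only $\|b\|_{L^\sigma}$ enters (the paper does this by integrating the $Du\cdot Db$ term by parts, using that the mollified $b$ is supported away from $\partial\Omega_\eps$ and that the spatial cut-off has compact support), otherwise the constant degenerates as the mollification parameter vanishes.

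The genuine gap is the limiting step, which you dismiss as ``standard'' but which occupies a large part of the paper (Sections~\ref{sec:regularization}--\ref{sec:proof}). The regularized solutions $u_\eps$ are solutions of Cauchy--Dirichlet problems on the larger domains $(\Omega_\eps)_T\cap Q_\rho$, not perturbations of $u$ on $\Omega_T$, so ``weak lower semicontinuity and a.e.\ convergence of the gradients'' does not by itself show that their limit is $u$, and one cannot test the original system with $u-u_\eps$ since $u_\eps$ does not vanish on $(\partial\Omega)_T\cap Q_\rho$. Moreover, the naive boundary datum for the approximating problems, the zero extension of $u$ to $\Omega_\eps$, in general has no distributional time derivative, so even energy estimates for $u_\eps$ uniform in $\eps$ are not automatic. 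The paper resolves this by prescribing the modified datum $g_\eps=\eta_\eps u$ with $\eta_\eps$ vanishing in an $\eps$-neighbourhood of $\partial\Omega$, controlling $\partial_t g_\eps$ in the dual space via Hardy's inequality on convex domains (Lemmas~\ref{lem:Hardy}--\ref{lem:time-u}), deriving a uniform energy estimate (Lemma~\ref{lem:energy}), extracting a weak limit $\tilde u$ with the correct lateral and initial boundary values, passing to the limit in the nonlinear term by interior $C^{1,\alpha}$ compactness together with \eqref{a-delta-convergence}, and finally identifying $\tilde u=u$ by uniqueness. Without an argument of this kind (or some substitute producing approximations that provably converge to the given weak solution $u$), your reduction to the smooth case is incomplete; the a priori estimate alone does not yield the theorem.
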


\begin{remark}
The dependence of the constant $C$ on the geometry of the boundary can be quantified in terms of the expression $\Theta_{\rho/2}(x_o)$ defined in Section
\ref{sec:convex-domains}.
\end{remark}

We point out that the gradient bound from the preceding theorem is the
exact analogue of the interior gradient bounds in 
\cite[Chapter VIII, Thms. 5.1, 5.2']{DB} for the case
$b=0$.

\subsection{Strategy of the proof}
The usual boundary flattening
 procedure via a local Lipschitz representation of $\partial\Omega$ 
leads to a nonlinearity depending on the gradient of the Lipschitz graph. Due to the limited regularity of $\partial \Omega$,
the  transformed nonlinearity admits only a measurable dependence on the spatial variables.
This prevents the reduction of the problem by freezing, comparing and reflection arguments
to the interior.  Therefore we pursue a different strategy, which is inspired by ideas from Banerjee \& Lewis 
\cite{BanerjeeLewis:2014}; see also \cite{BDMS} for the corresponding 
boundary estimate for minimizers to integral functionals with non-standard $p,q$ growth.
The present paper represents in some sense the parabolic counterpart of \cite{BanerjeeLewis:2014}.

We establish the 
sup-estimate of $Du$ in Theorem~\ref{thm:main}
as the limit of similar estimates for more regular approximating problems.
More precisely, we approximate the convex domain $\Omega$
in Hausdorff distance from outside by smooth convex domains $\Omega_\eps$,
regularize the  nonlinearity $\mathbf{a}$ into $\mathbf{a}_\eps$, extend $u$ and $b$ by zero outside of $\Omega_T$,
and mollify them properly into $g_\eps$ and $b_\eps$. 
Then we solve in $(\Omega_\eps)_T\cap
Q_\rho(x_o,t_o)$ the Cauchy-Dirichlet problem associated to $\mathbf{a}_\eps$ and $b_\eps$, with boundary values $g_\eps$
on the parabolic boundary of $(\Omega_\eps)_T\cap Q_\rho(x_o,t_o)$.
The unique solution $u_\eps$ -- which exists by standard methods  --
 fulfills the Dirichlet  condition $u_\eps=0$ on
$(\partial\Omega_\eps)_T \cap Q_\rho(x_o,t_o)$ by construction. Since the domain of $u_\epsilon$ is smooth we may use a reflection argument together with the interior  $C^{1,\lambda}$-regularity results and the Schauder estimates
for linear parabolic systems to show that these solutions are smooth up to the boundary; see
Appendix~\ref{app:smooth}. 

Next, we prove a quantitative sup-estimate for $Du_{\eps}$, which is uniform
in the parameter $\eps$. Its proof consists of two steps. In the first  step
we derive an energy estimate for the second order derivatives; see Proposition \ref{prop:energy}. 
The key ingredient is a differential geometric
identity from \cite{Grisvard:1985}; see Lemma~\ref{lem:Grisvard}. This identity allows us to exploit the convexity of $\partial\Omega_\eps$ in the sense that
the boundary integral, which cannot be controlled by integrals over $(\Omega_\eps)_T \cap Q_\rho(x_o,t_o)$,
admits a sign and can be discarded in the estimate. 
Based on the energy estimate, we then perform
 a Moser iteration, which leads to the sup-estimate for $Du_{\eps}$ in
Proposition~\ref{prop:apriori}.

Finally we pass to the limit $\eps\downarrow0$, which can be achieved by certain compactness
arguments. 
Decisive
for this argument are the uniform (in $\eps$) energy estimates for the solutions to the regularized problems.
The main obstruction at this stage is that testing the
original parabolic system by the difference $u-u_\eps$ is not allowed,
since $u_\epsilon$ does not admit zero boundary values on
$(\partial\Omega)_T\cap Q_\rho(x_o,t_o)$. Moreover, $u$ is not
sufficiently regular in time, i.e.  the
extension of $u$ by zero outside of $\Omega_T$ does not necessarily
admit a distributional time derivative on $(\Omega_\eps)_T\cap
Q_\rho(x_o,t_o)$. This is  why we will not choose the zero
extension of $u$
as boundary values for $u_\eps$, but the modified version
$g_\eps := \eta_\eps (x)u$. The cut-off function
$\eta_\eps$ is chosen to vanish on the set
$\{ x\in\Omega: {\rm dist}(x,\partial\Omega)<\eps\}$. With the aid of
Hardy's inequality, one checks that $g_\eps$ admits a time
derivative in the dual space on the domain $(\Omega_\eps)_T\cap
Q_\rho(x_o,t_o)$. 
Note that this choice of the boundary values does not affect the sup-estimate for $Du_{\eps}$.  On the other hand the choice allows us to derive  an appropriate uniform energy estimate for $u_\eps$. Thus, we can pass to a weakly convergent subsequence with the weak limit $\tilde u$.
Since the sup-estimate for $Du_{\eps}$ is uniform in $\eps$, it can be transferred to
 $D\tilde u$. 
 To conclude, it is left to show $\tilde u=u$. This however follows from the uniqueness.

\medskip
\noindent
{\it Acknowledgments.} V.~B\"ogelein  and N.~Liao have been supported by the FWF-Project P31956-N32
``Doubly nonlinear evolution equations".

\section{Preliminaries}\label{Sec:Preliminaries}

\subsection{A remark on convex domains}\label{sec:convex-domains}
The dependence of the constant from
  Theorem~\ref{thm:main} on the domain is given by the 
  quantity
\begin{equation}\label{def-Theta}
  \Theta_{\rho}(x_o):=\frac{\rho^n}{|\Omega\cap B_\rho(x_o)|},
  \qquad\mbox{for $x_o\in\partial\Omega$ and $\rho>0$.}
\end{equation}
Since every bounded convex
set satisfies a uniform cone condition, $\Theta_{\rho}(x_o)$ can be bounded
independently of $x_o\in\partial\Omega$ and $\rho>0$
by a constant only depending on the
domain $\Omega$. For a more detailed discussion, we
refer to \cite[Section 2.1]{BDMS}.



\subsection{A differential geometric identity}

For a $C^2$-domain $\Omega\subset\R^n$, the second fundamental
form of $\partial\Omega$ is defined by 
\begin{equation*}
  \boldsymbol{B}_x(\xi,\eta):=-\partial_\xi\nu(x)\cdot \eta
\end{equation*}
for any $x\in\partial\Omega$ and all tangential vectors $\xi,\eta\in
\mathrm{T}_x(\partial\Omega)$,
where $\nu\in C^1(\partial\Omega,\R^n)$ denotes the outer
unit normal vector field on $\partial\Omega$.

We will use the following differential geometric identity due to Grisvard
\cite[Eqn. 3,1,1,8]{Grisvard:1985}. 

\begin{lemma}\label{lem:Grisvard}
  Let $\Omega\subset\R^n$ be a bounded $C^2$-domain and $w\in
  C^1(\overline\Omega,\R^n)$ a vector field. Then we have the
  following identity on $\partial\Omega$:
  \begin{align*}
    &(w\cdot\nu)\Div w
    -
    \partial_w w\cdot \nu\\
    &\qquad=
    \Div_{\mathrm{T}}((w\cdot\nu)w_{\mathrm{T}})
    -
    (\trace\,\boldsymbol B)(w\cdot\nu)^2
    -
    \boldsymbol B(w_{\mathrm{T}},w_{\mathrm{T}})
    -
    2w_{\mathrm{T}}\cdot\nabla_{\mathrm{T}}(w\cdot\nu),
  \end{align*}
  where $w_{\mathrm{T}}:=w-(w\cdot\nu)\nu$ denotes the tangential component
  of $w$
  and $\nabla_{\mathrm{T}},$ $\Div_{\mathrm{T}}$ the gradient and the
  divergence, respectively, with regard to the
  tangential directions.
\end{lemma}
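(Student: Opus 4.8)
The plan is to verify the identity by a direct pointwise computation on $\partial\Omega$, reducing everything to first-order derivatives of $w$ and of the normal field. As a preparation I would extend the outer unit normal $\nu$ from $\partial\Omega$ to a one-sided neighbourhood inside $\overline\Omega$ as the gradient of the signed distance function of $\partial\Omega$; since $\partial\Omega$ is of class $C^2$, this extension is $C^1$, satisfies $|\nu|\equiv1$, has symmetric Jacobian $D\nu$, and obeys $\partial_\nu\nu=D\nu\,\nu\equiv0$ (differentiate $|\nu|^2\equiv1$ and use the symmetry of $D\nu$). Throughout I abbreviate $\phi:=w\cdot\nu$ and keep the notation $w_{\mathrm{T}}:=w-\phi\nu$; both are $C^1$ near $\partial\Omega$ and satisfy $w_{\mathrm{T}}\cdot\nu\equiv0$. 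I shall repeatedly use two elementary facts: for any $C^1$ vector field $V$ near $\partial\Omega$ there is the orthogonal splitting $\Div V=\Div_{\mathrm{T}}V+(\partial_\nu V)\cdot\nu$ of the Euclidean divergence into its tangential and normal parts, and, since $D\nu$ annihilates $\nu$ and restricts on tangential vectors to the operator $-\boldsymbol{B}$, its trace satisfies $\Div\nu=\trace D\nu=-\trace\boldsymbol{B}$ along $\partial\Omega$.

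I would first treat the convective term. Differentiating the scalar relation $w\cdot\nu=\phi$ in the direction $e_\beta$ gives $\sum_i(\partial_\beta w^i)\nu^i=\partial_\beta\phi-(D\nu\,w)_\beta$, where the symmetry of $D\nu$ enters; contracting this with $w^\beta$ and using $D\nu\,\nu=0$ together with $(D\nu\,w_{\mathrm{T}})\cdot\nu=0$ one obtains
\begin{equation*}
  \partial_w w\cdot\nu=\partial_w\phi-(D\nu\,w_{\mathrm{T}})\cdot w_{\mathrm{T}}=\partial_w\phi+\boldsymbol{B}(w_{\mathrm{T}},w_{\mathrm{T}}),
\end{equation*}
the last equality being the very definition of $\boldsymbol{B}$. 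For the divergence term I would split $w=w_{\mathrm{T}}+\phi\nu$ and compute, along $\partial\Omega$,
\begin{equation*}
  \Div w=\Div w_{\mathrm{T}}+\Div(\phi\nu)=\Div_{\mathrm{T}}w_{\mathrm{T}}+\partial_\nu\phi-\phi\,\trace\boldsymbol{B},
\end{equation*}
where $\Div w_{\mathrm{T}}=\Div_{\mathrm{T}}w_{\mathrm{T}}$ because its normal part $(\partial_\nu w_{\mathrm{T}})\cdot\nu=\partial_\nu(w_{\mathrm{T}}\cdot\nu)-w_{\mathrm{T}}\cdot\partial_\nu\nu$ vanishes (using $w_{\mathrm{T}}\cdot\nu\equiv0$ and $\partial_\nu\nu=0$), and $\Div(\phi\nu)=\partial_\nu\phi+\phi\,\Div\nu$.

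Multiplying the last identity by $\phi$, subtracting the formula for $\partial_w w\cdot\nu$, and decomposing $\partial_w\phi=w_{\mathrm{T}}\cdot\nabla_{\mathrm{T}}\phi+\phi\,\partial_\nu\phi$, the two terms $\phi\,\partial_\nu\phi$ cancel and one is left with
\begin{equation*}
  (w\cdot\nu)\Div w-\partial_w w\cdot\nu=\phi\,\Div_{\mathrm{T}}w_{\mathrm{T}}-w_{\mathrm{T}}\cdot\nabla_{\mathrm{T}}\phi-(\trace\boldsymbol{B})\phi^2-\boldsymbol{B}(w_{\mathrm{T}},w_{\mathrm{T}}).
\end{equation*}
Finally, the Leibniz rule for the tangential divergence, $\Div_{\mathrm{T}}(\phi\,w_{\mathrm{T}})=\phi\,\Div_{\mathrm{T}}w_{\mathrm{T}}+w_{\mathrm{T}}\cdot\nabla_{\mathrm{T}}\phi$, recasts the right-hand side as $\Div_{\mathrm{T}}(\phi\,w_{\mathrm{T}})-2\,w_{\mathrm{T}}\cdot\nabla_{\mathrm{T}}\phi-(\trace\boldsymbol{B})\phi^2-\boldsymbol{B}(w_{\mathrm{T}},w_{\mathrm{T}})$, and re-inserting $\phi=w\cdot\nu$ reproduces exactly the asserted identity.

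There is no genuine analytic difficulty in this argument: every derivative that occurs is of first order, so the hypotheses $w\in C^1(\overline\Omega,\R^n)$ and $\partial\Omega\in C^2$ are precisely what is needed. The only point that requires care is the consistent bookkeeping of tangential versus normal components of the various gradients — in particular the systematic use of $\partial_\nu\nu=0$ and of the symmetry of $D\nu$, which is what makes the two curvature terms $-(\trace\boldsymbol{B})(w\cdot\nu)^2$ and $-\boldsymbol{B}(w_{\mathrm{T}},w_{\mathrm{T}})$ appear with the signs stated (and, for a convex domain with outer normal $\nu$, with a favourable sign). Alternatively, one may simply invoke Grisvard's computation in \cite{Grisvard:1985}, of which the above is a streamlined rederivation.
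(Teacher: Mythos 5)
Your computation is correct, and it is worth noting that the paper itself does not prove this lemma at all: it simply quotes the identity from Grisvard's book (Eqn.\ 3.1.1.8 in \cite{Grisvard:1985}), so your argument is a genuine, self-contained rederivation rather than a variant of an in-paper proof. I checked the key steps against the paper's sign convention $\boldsymbol B(\xi,\eta)=-\partial_\xi\nu\cdot\eta$: extending $\nu=\nabla d$ via the signed distance gives $D\nu$ symmetric with $D\nu\,\nu=0$ and $\Div\nu=\trace D\nu=-\trace\boldsymbol B$; your convective identity $\partial_w w\cdot\nu=\partial_w\phi+\boldsymbol B(w_{\mathrm T},w_{\mathrm T})$ follows since $D\nu\,w=D\nu\,w_{\mathrm T}$ and $(D\nu\,w_{\mathrm T})\cdot\nu=w_{\mathrm T}\cdot D\nu\,\nu=0$; the splitting $\Div w=\Div_{\mathrm T}w_{\mathrm T}+\partial_\nu\phi-\phi\,\trace\boldsymbol B$ is right because $(\partial_\nu w_{\mathrm T})\cdot\nu=0$ on $\partial\Omega$; and the cancellation of the two $\phi\,\partial_\nu\phi$ terms together with the tangential Leibniz rule (legitimate here since $\phi\,w_{\mathrm T}$ is tangential) yields exactly the asserted identity, with all four boundary terms carrying the stated signs. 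The only hypotheses used are $w\in C^1(\overline\Omega,\R^n)$ and $\partial\Omega\in C^2$ (so that $d$, hence $\nu$, is $C^1$ near $\partial\Omega$), matching the lemma. What your approach buys over the paper's citation is transparency about where the curvature terms and their signs come from — precisely the point exploited later, since for convex $\Omega$ both $-(\trace\boldsymbol B)(w\cdot\nu)^2$ and $-\boldsymbol B(w_{\mathrm T},w_{\mathrm T})$ are non-negative.
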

Note that for a convex domain $\Omega$, our sign convention for the second
fundamental form implies
\begin{equation*}
  \boldsymbol{B}_x(\eta,\eta)\le0
  \mbox{\qquad for any }\eta\in\mathrm{T}_x(\partial\Omega).
\end{equation*}

\subsection{Properties of the coefficients $\mathbf{a}(r)$}

Keeping in mind assumption~\eqref{assumption:a(0)}, we observe
\begin{equation*}
  \mathbf{a}(r)
  =
  \frac1r\int_0^r\frac{\d}{\ds}[s\mathbf{a}(s)]\ds
  =
  \int_0^1\big[\mathbf{a}(r\sigma)+r\sigma\mathbf{a}'(r\sigma)\big]\d\sigma
  \quad\mbox{for any $r>0$.}
\end{equation*}
Therefore, assumption \eqref{assumption:a'} and 
standard estimates, cf. \cite[Lemma~2.1]{GiaquintaModica:1986}, \cite[Lemma~2.1]{AcerbiFusco}, imply
\begin{equation}\label{bounds-a}
	c^{-1}m\big( \mu^2+r^{2}\big)^{\frac{p-2}{2}}
	\leq \mathbf{a}(r)
	\leq
	cM\big( \mu^2+r^{2}\big)^{\frac{p-2}{2}}
\end{equation}
for all $r>0$ and a constant $c=c(p)$. For the derivatives of the coefficients
$\mathbf{a}(|\xi|)\xi_\alpha^i$ we compute
\begin{equation*}
  \partial_{\xi_\beta^j}\big[\mathbf{a}(|\xi|)\xi_\alpha^i\big] 
  =
  \mathbf{a}(|\xi|)\delta_{\alpha\beta}\delta^{ij}
  +\frac{\mathbf{a}^{\prime }( |\xi|) }{|\xi|}\xi_\alpha^i\xi_\beta^j
\end{equation*}
for any $\xi\in\R^{Nn}$ with $\xi\not=0$. This implies the monotonicity and growth property
\begin{equation}\label{monoton-coefficients}
	\boldsymbol h( |\xi| ) |\lambda | ^{2}
	\leq
	\sum_{i,j=1}^{N}\sum_{\alpha,\beta=1}^n \partial_{\xi_\beta^j}\big[\mathbf{a}(|\xi|)\xi_\alpha^i\big] 
        \lambda_{\alpha}^i\lambda _{\beta}^j
	\leq 
	\boldsymbol H( |\xi| ) |\lambda | ^{2},
\end{equation}
for any $\xi,\lambda\in\R^{Nn}$ with $\xi\neq0$, where we abbreviated
\begin{equation}\label{bounds-h-H}
  \left\{
  \begin{aligned}
	\boldsymbol{h}(r)
        &:=\min\{\mathbf{a}(r), \mathbf{a}(r)+r\mathbf{a}'(r)\}\ge c^{-1}m(\mu^2+r^2)^{\frac{p-2}2},\\
	\boldsymbol H(r)&:=\max\{\mathbf{a}(r), \mathbf{a}(r)+r\mathbf{a}'(r)\}
        \le
        cM(\mu^2+r^2)^{\frac{p-2}2}.
      \end{aligned}\right.
  \end{equation}
  The estimates follow from~\eqref{bounds-a}
  and~\eqref{assumption:a'} by distinguishing the cases $\mathbf{a}'(r)\ge 0$ and $\mathbf{a}'(r)<0$.

\subsection{Sobolev's constant on convex domains}

In order to determine the dependencies of the constants in the
Moser iteration, we rely on the following version of
Sobolev's embedding  valid for convex domains, cf.~\cite[Chapter~10, Thm.~8.1]{DiBenedetto:RealAnalysis} and \cite[Lemma~2.3]{BDMS}.

\begin{lemma}\label{lem:sobolev}
Let $K\subset\R^n$ be a bounded open convex set and $1\le p<n$. Then, for any
$w\in W^{1,p}(K)$ we have
\begin{equation*}
  \bigg[\mint_K| w|^{p^\ast}\,\dx\bigg]^\frac{1}{p^\ast}
  \le
  c(n,p)\frac{(\diam K)^n}{|K|}|K|^{\frac1n}\bigg[\mint_K|Dw|^{p}\,\dx\bigg]^\frac{1}{p}
  +
  \bigg[\mint_K| w|^{p}\,\dx\bigg]^\frac{1}{p},
\end{equation*}
with the Sobolev exponent $p^\ast=\frac{np}{n-p}$.
\end{lemma}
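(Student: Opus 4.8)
The plan is to reduce the inequality, via the Meyers--Serrin theorem, to the case $w\in C^\infty(K)\cap W^{1,p}(K)$ (both sides being stable under $W^{1,p}(K)$-convergence once the estimate is available for smooth functions), and then to produce a pointwise bound for $|w-w_K|$, where $w_K:=\mint_K w\,\dx$, in terms of the Riesz potential of order one applied to $|Dw|$. Convexity enters at exactly one point: every ray issued from a point $x\in K$ meets $\partial K$ exactly once, and the corresponding radial distance $\rho_x(\omega):=\sup\{r>0:x+r\omega\in K\}$ is bounded by $\diam K$; this is the sole source of the geometric factor $(\diam K)^n/|K|$, and no regularity of $\partial K$ is ever used.

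Concretely, for $x\in K$ I would write $w(x)-w_K=\frac1{|K|}\int_K(w(x)-w(y))\,\dy$ and, using that the segment $[x,y]$ lies in $K$, estimate $|w(x)-w(y)|\le\int_0^{\rho_x(\omega)}|Dw(x+r\omega)|\,\dr$ with $\omega=\frac{y-x}{|y-x|}$ and $|Dw|$ extended by zero outside $K$. Passing to polar coordinates centered at $x$, carrying out the radial integration to produce the factor $\rho_x(\omega)^n/n\le(\diam K)^n/n$, and converting the resulting integral back into one over $K$, one arrives at
\begin{equation*}
  |w(x)-w_K|\le\frac{(\diam K)^n}{n\,|K|}\int_K\frac{|Dw(y)|}{|x-y|^{n-1}}\,\dy
  \qquad\text{for a.e.\ }x\in K.
\end{equation*}
For $1<p<n$ the Hardy--Littlewood--Sobolev inequality bounds the $L^{p^\ast}(\R^n)$-norm of the order-one Riesz potential of $\chi_K|Dw|\in L^p(\R^n)$ by $c(n,p)\|Dw\|_{L^p(K)}$; hence $\|w-w_K\|_{L^{p^\ast}(K)}\le c(n,p)\frac{(\diam K)^n}{|K|}\|Dw\|_{L^p(K)}$. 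Dividing by $|K|^{1/p^\ast}$, using the identity $\frac1p-\frac1{p^\ast}=\frac1n$ to rewrite $|K|^{-1/p^\ast}|K|^{1/p}=|K|^{1/n}$, and finally adding and subtracting $w_K$ together with Jensen's bound $|w_K|\le(\mint_K|w|^p\,\dx)^{1/p}$, yields precisely the asserted inequality.

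The only genuinely delicate point is the borderline exponent $p=1$, for which the order-one Riesz potential is merely of weak type $L^1\to L^{n/(n-1)}$, so the last step cannot invoke fractional integration directly. Here I would instead run the standard truncation/layer-cake argument (in the spirit of Maz'ya) on the pointwise bound above, applied to the superlevel sets of $|w-w_K|$, or simply appeal to the classical statement in \cite[Chapter~10, Thm.~8.1]{DiBenedetto:RealAnalysis}. Apart from this endpoint, the proof is a routine bookkeeping of constants once the Riesz-potential bound has been established.
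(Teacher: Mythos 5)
Your proof is correct. The paper does not prove this lemma at all -- it simply cites \cite[Chapter~10, Thm.~8.1]{DiBenedetto:RealAnalysis} and \cite[Lemma~2.3]{BDMS} -- and your argument (the pointwise Riesz-potential bound $|w(x)-w_K|\le \frac{(\diam K)^n}{n|K|}\int_K|Dw(y)|\,|x-y|^{1-n}\,\dy$ obtained from integrating along segments, which is where convexity enters, followed by Hardy--Littlewood--Sobolev and the normalization $|K|^{-1/p^\ast}|K|^{1/p}=|K|^{1/n}$) is precisely the classical proof underlying those references. You also correctly flag the only delicate point, the endpoint $p=1$, where fractional integration must be replaced by a truncation/layer-cake argument; that fix is standard and sound.
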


\subsection{Auxiliary lemmata} The following elementary assertions will be used in
the Moser iteration. 

\begin{lemma}\label{lem:A}
Let $A>1$, $\theta>1$, $\gamma>0$ and $k\in\N$. Then, we have 
\begin{align}\label{A1}
	\prod_{j=1}^k
	A^{\frac{\theta^{k-j+1}}{\gamma(\theta^k-1)}}
	=
	A^\frac{\theta}{\gamma(\theta-1)}
	\qquad\mbox{for $A,\theta>0$.}
\end{align}	
and 
\begin{align}\label{A2}
	\prod_{j=1}^k
	A^{\frac{j\theta^{k-j+1}}{\gamma(\theta^k-1)}}
	\le
	A^{\frac{\theta^2}{\gamma(\theta-1)^2}} 
	\qquad\mbox{for $A,\theta>1$.}
\end{align}

\end{lemma}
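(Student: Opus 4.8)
The statement to prove is Lemma~\ref{lem:A}, an elementary computation about finite products of powers of a fixed base $A$. The plan is to reduce both assertions to closed-form evaluations of two finite sums appearing in the exponents, namely $S_1:=\sum_{j=1}^k\theta^{k-j+1}$ and $S_2:=\sum_{j=1}^k j\,\theta^{k-j+1}$, and then divide by the common factor $\gamma(\theta^k-1)$.

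For \eqref{A1}, I would reindex with $\ell=k-j+1$, so that $\ell$ runs from $1$ to $k$, giving $S_1=\sum_{\ell=1}^k\theta^{\ell}=\theta\,\frac{\theta^k-1}{\theta-1}$ by the geometric series formula (valid for $\theta\ne 1$; the claim's hypothesis $\theta>1$ certainly suffices). Hence the total exponent of $A$ on the left is $\frac{S_1}{\gamma(\theta^k-1)}=\frac{\theta}{\gamma(\theta-1)}$, which is exactly the exponent on the right; this proves \eqref{A1}. Note that the stated hypothesis ``$A,\theta>0$'' should really be ``$\theta\ne 1$'' for the geometric sum to take this form, but since the lemma is applied with $\theta>1$ this is harmless and I would just record the computation under $\theta>1$.

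For \eqref{A2}, the key is to evaluate (or rather bound) $S_2=\sum_{j=1}^k j\,\theta^{k-j+1}$. Again reindexing with $\ell=k-j+1$ gives $j=k-\ell+1$, so $S_2=\sum_{\ell=1}^k (k-\ell+1)\theta^\ell=(k+1)\sum_{\ell=1}^k\theta^\ell-\sum_{\ell=1}^k\ell\,\theta^\ell$. Both sums have standard closed forms: $\sum_{\ell=1}^k\theta^\ell=\theta\frac{\theta^k-1}{\theta-1}$ and $\sum_{\ell=1}^k\ell\theta^\ell=\frac{\theta\big(1-(k+1)\theta^k+k\theta^{k+1}\big)}{(\theta-1)^2}$ (differentiate the geometric series or prove by induction). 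Substituting and simplifying, one finds $S_2=\frac{\theta\big(\theta^{k+1}-(k+1)\theta+k\big)}{(\theta-1)^2}$. Since $A>1$, it suffices to show $S_2\le \frac{\theta^2}{(\theta-1)^2}\,(\theta^k-1)$, i.e. that $\theta\big(\theta^{k+1}-(k+1)\theta+k\big)\le \theta^2(\theta^k-1)$, which after dividing by $\theta>0$ reduces to $\theta^{k+1}-(k+1)\theta+k\le \theta^{k+1}-\theta$, i.e. $-(k+1)\theta+k\le-\theta$, i.e. $k\le k\theta$, which holds since $\theta>1$ and $k\ge 1$. Taking powers of $A>1$ preserves the inequality, giving \eqref{A2}.

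The only mild obstacle is bookkeeping: getting the closed form of $\sum_{\ell=1}^k\ell\theta^\ell$ right and carrying out the algebraic simplification of $S_2$ without sign errors. To sidestep any risk, I would alternatively prove the bound $S_2\le\frac{\theta^2}{(\theta-1)^2}(\theta^k-1)$ directly: write $S_2=\sum_{j=1}^k j\,\theta^{k-j+1}\le\sum_{j=1}^\infty j\,\theta^{k-j+1}$ is divergent, so that does not work; instead use $j\le \theta^{j-1}\cdot\text{(something)}$ is too crude. The cleanest rigorous route is therefore the exact evaluation above followed by the one-line inequality $k\le k\theta$; this is short, self-contained, and uses only $\theta>1$, $k\in\N$, and $A>1$, exactly as hypothesized.
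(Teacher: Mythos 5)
Your proof is correct and proceeds essentially as the paper does: both arguments reduce the claims to the exponent sums, with \eqref{A1} a plain geometric series and \eqref{A2} the arithmetic--geometric sum $\sum_{j=1}^k j\,\theta^{k-j+1}$, which the paper bounds via the derivative of $\sum_{j=0}^k t^j$ at $t=\theta^{-1}$ while you evaluate it in closed form and reduce the required inequality to $k\le k\theta$ --- the same elementary computation in slightly different clothing. One immaterial slip: your parenthetical claim that $\sum_{j=1}^\infty j\,\theta^{k-j+1}$ diverges is false for $\theta>1$ (it converges to $\theta^{k+2}/(\theta-1)^2$, which is, however, too crude to yield \eqref{A2} directly), but since you discard that route anyway, your argument is unaffected.
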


\begin{proof}
For the first product we compute
\begin{align*}
	\prod_{j=1}^k
	A^{\frac{\theta^{k-j+1}}{\gamma(\theta^k-1)}}
	&=
	\exp\bigg[
	\log A\sum_{j=1}^k\frac{\theta^{k-j+1}}{\gamma(\theta^k-1)}\bigg]\\\nonumber
	&=
	\exp\bigg[
	\frac{\log A}{\gamma(1-\theta^{-k})}
	\sum_{j=1}^k\theta^{-j+1}	\bigg]\\\nonumber
	&=
	\exp\bigg[
	\frac{\log A}{\gamma (1-\theta^{-1})} 
	\bigg] 
	=
	A^\frac{\theta}{\gamma(\theta-1)}.
\end{align*}	
Similarly, we re-write the second product in the form
\begin{align*}
  	\prod_{j=1}^k 
  	A^{\frac{j\theta^{k-j+1}}{\gamma(\theta^k-1)}}
  	=
  	\exp\bigg[\log A
	\sum_{j=1}^k \frac{j\theta^{k-j+1}}{\gamma(\theta^k-1)}\bigg]
  	=
  	\exp\bigg[\frac{\log A }{\gamma(1-\theta^{-k})}
	\sum_{j=1}^k j\theta^{-j+1}\bigg].
\end{align*}
To estimate the right-hand side further, we observe that for any $t\in(0,1)$ we have
\begin{align*}
	\frac{1}{1-t^k}\sum_{j=1}^kj t^{j-1}
	=
	\frac{1}{1-t^k}\frac{\d}{\d t}\sum_{j=0}^k t^{j}
        =\frac{1}{1-t^k}
	\frac{\d}{\d t}\frac{1-t^{k+1}}{1-t}
        \le
        \frac{1}{(1-t)^2}.
\end{align*}
We use this estimate with the choice $t=\theta^{-1}\in(0,1)$ and obtain 
\begin{align*}
  	\prod_{j=1}^k
	A^{\frac{j\theta^{k-j+1}}{\gamma(\theta^k-1)}}
  	&\le
  	\exp\bigg[\frac{\log A}{\gamma(1-\theta^{-1})^2}\bigg]
  	=
  	A^{\frac{\theta^2}{\gamma(\theta-1)^2}} .
\end{align*}
This proves the claim.
\end{proof}

\section{A priori estimates for smooth solutions}\label{sec:apriori}
We begin by proving the desired gradient bound in the case of 
regular data. More precisely, we additionally assume that the boundary
$\partial\Omega$ is of class $C^2$ and that the solution is of class
$C^3$. Moreover, we consider parabolic systems that are
non-degenerate, i.e. $\mu>0$, and inhomogeneities with $\spt b\Subset\Omega\times\R$.
The precise statement reads as follows.

\begin{proposition}\label{prop:apriori}
    Let $\Omega\subset\R^n$ be a bounded convex domain with
    $C^2$-boundary, $B_\rho(x_o)$ a ball with
    $\frac{\rho^n}{2^n|\Omega\cap B_{\rho/2}(x_o)|}\le \Theta$ 
    for some constant $\Theta>0$, and $(t_o-\rho^2,t_o)\subset(0,T)$. Moreover, we assume that
    $u\in C^3(\overline\Omega_T\cap
    Q_\rho(z_o),\R^N)$ is a solution to
the parabolic system 
\begin{equation}\label{reg: parabolic system}
  \partial_tu^i
  -
	\sum_{\alpha=1}^{n}
	\big[\mathbf{a}(|Du|) u^i_{x_\alpha}\big]_{x_\alpha}
	=
	b^i 
	\qquad\mbox{in $\Omega_T\cap Q_\rho(z_o)$, }
\end{equation}
for $i=1,\ldots,N$, where $\mathbf a$ and $b$ satisfy assumptions \eqref{assumption:a(0)} -- \eqref{assumption:b} with $\mu\in(0,1]$ and
  \begin{equation*}
    \spt b\Subset\Omega\times\R.
  \end{equation*}
 Moreover, $u\equiv0$ on $(\pl\Om)_T\cap Q_{2\rho}(z_o)$.
  Then we have the gradient sup-estimate
\begin{align}\label{Lipschitz-estimate}\nonumber
    &\sup_{\Omega_T\cap Q_{\rho/2}(z_o)}|Du|\\
    &\qquad\le
        C\bigg[\Big(1+ \rho^{n+2}\| b\|_{
  L^\sigma(\Omega_T\cap Q_\rho(z_o))}^{\frac{(n+2)\sigma}{\sigma-n-2}}\Big) \biint_{\Omega_T \cap Q_{3\rho/4}(z_o)}
	\big( 1+|Du|^p\big)
	\,\dx\dt\bigg]^{\frac{d}p},
\end{align}
for some constant $C$ that depends at most on $n,N,p,\sigma,m,M$,
and $\Theta$ and where $d$ denotes the scaling deficit defined in \eqref{def:deficit}.
\end{proposition}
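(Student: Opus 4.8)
The plan is to run the two--step scheme outlined in the introduction, following \cite{BanerjeeLewis:2014, BDMS}: first a Caccioppoli--type energy estimate for the second spatial derivatives of $u$ that is valid \emph{up to the lateral boundary}, then a Moser iteration built on it; away from the boundary everything parallels the interior argument in \cite[Chapter~VIII, Thms.~5.1, 5.2']{DB}. \textbf{Step 1 (energy estimate).} Write $v:=\mu^2+|Du|^2$ and choose a cut--off $\zeta=\zeta(x,t)$ that equals $1$ on $Q_{\rho/2}(z_o)$, is supported in $Q_r(z_o)$ for some $r\in(\tfrac{\rho}{2},\tfrac{3\rho}{4}]$, vanishes on the spherical cap $\Omega\cap\partial B_r(x_o)$ and near the bottom time level, but \emph{not} on the lateral part $(\partial\Omega)\cap B_r(x_o)$, where $u\equiv0$. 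Testing \eqref{reg: parabolic system} with $-\partial_{x_k}(\zeta^2 v^{q}u^i_{x_k})$, summed over $k$ and $i$ --- which avoids differentiating the merely integrable $b$ --- and integrating by parts in the diffusion term produces, besides the interior terms controlled by \eqref{monoton-coefficients}--\eqref{bounds-h-H}, a boundary integral over $(\partial\Omega)_T\cap Q_r(z_o)$. On this set $u\equiv0$, so its tangential derivatives vanish and $Du=\partial_\nu u\otimes\nu$; applying Grisvard's identity (Lemma~\ref{lem:Grisvard}) to the gradient fields $\nabla u^i$ --- whose tangential components $\nabla_{\mathrm T}u^i$ vanish there, so the term $2w_{\mathrm T}\cdot\nabla_{\mathrm T}(w\cdot\nu)$ disappears and the $\Div_{\mathrm T}$--term integrates to zero against $\zeta$ --- the remaining curvature contributions carry the favourable sign because $\boldsymbol B_x(\eta,\eta)\le0$ on the convex $\Omega$, and the boundary integral can be discarded. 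After absorbing the $|D^2u|$--terms and estimating the worst $b$--term $\iint|b|\,\zeta^2 v^{q}|D^2u|\,\dxt$ by Young's inequality, one is left with an estimate of the form
\[
  \sup_t\int_{\Omega\cap B_r}\zeta^2 v^{q+1}\dx
  +\iint\zeta^2 v^{q+\frac{p-2}{2}}|D^2u|^2\dxt
  \le C\iint|\zeta\zeta_t|\,v^{q+1}\dxt
  +C\iint|D\zeta|^2 v^{q+\frac{p}{2}}\dxt
  +C\iint\zeta^2|b|^2 v^{q-\frac{p-2}{2}}\dxt ,
\]
with $C=C(n,N,p,m,M)$; this is the content of Proposition~\ref{prop:energy}. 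The mismatch between the exponents $q+1$ (with $\zeta\zeta_t$ and the supremum) and $q+\tfrac p2$ (with $|D\zeta|^2$) reflects the lack of scale invariance for $p\ne2$ and is the source of the scaling deficit.

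\textbf{Step 2 (gain of integrability).} Since $v^{q+\frac{p-2}{2}}|D^2u|^2\gtrsim|Dw|^2$ for $w:=v^{\frac{q}{2}+\frac{p}{4}}$, the left--hand side controls $\iint\zeta^2|Dw|^2$ and $\sup_t\int\zeta^2 w^{2a}$ with $a:=\tfrac{2(q+1)}{2q+p}$ --- so $a\le1$ for $p\ge2$, $a\ge1$ for $p<2$, and $a\to1$ as $q\to\infty$. Interpolating between these two bounds by means of the Sobolev inequality on convex domains (Lemma~\ref{lem:sobolev}, whose constant is controlled by $\Theta$ thanks to convexity and the hypothesis on $B_\rho(x_o)$) and a Gagliardo--Nirenberg interpolation in time yields a reverse--H\"older step: $v$ passes from $L^{q_0}$ on $\Omega_T\cap Q_{r_1}(z_o)$ to $L^{\kappa q_0}$ on $\Omega_T\cap Q_{r_2}(z_o)$, for concentric cylinders, with Sobolev gain $\kappa=1+\tfrac2n$ (resp.\ the corresponding singular value when $p<2$) and a constant that blows up like a negative power of $r_1-r_2$ times a power of the energy. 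It is here that the hypothesis $\sigma>n+2$ from \eqref{assumption:b} is used: H\"older's inequality applied to the term $\iint\zeta^2|b|^2 v^{q-\frac{p-2}{2}}$ inflates the power of $v$ by the factor $\tfrac{\sigma}{\sigma-2}$, and $\tfrac{\sigma}{\sigma-2}<\kappa$ is exactly $\sigma>n+2$, leaving room for the iteration to proceed.

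\textbf{Step 3 (iteration and conclusion).} I start from $q_0=\tfrac p2$ --- admissible because $v^{p/2}\le C(1+|Du|^p)\in L^1(\Omega_T\cap Q_{3\rho/4}(z_o))$ by hypothesis --- and iterate Step~2 with exponents $q_j=\kappa^j q_0$ and radii $r_j$ decreasing geometrically from $\tfrac{3\rho}{4}$ to $\tfrac\rho2$. Letting $j\to\infty$ bounds $\sup_{\Omega_T\cap Q_{\rho/2}(z_o)}v$ by an infinite product of the step constants --- convergent by Lemma~\ref{lem:A} --- times a power of $\biint_{\Omega_T\cap Q_{3\rho/4}(z_o)}v^{q_0}$. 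The energy factors collected along the way, because of the exponent mismatch from Step~1, compound into precisely the scaling deficit $d$ of \eqref{def:deficit}, so the exponent of $\biint_{\Omega_T\cap Q_{3\rho/4}(z_o)}(1+|Du|^p)$ comes out as $d/p$, while the $b$--contributions reassemble into the factor $1+\rho^{n+2}\|b\|_{L^\sigma(\Omega_T\cap Q_\rho(z_o))}^{(n+2)\sigma/(\sigma-n-2)}$; this is exactly \eqref{Lipschitz-estimate}.

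\textbf{Expected main obstacle.} The delicate point is Step~1: the test function must stay admissible up to $\partial\Omega$ without destroying the homogeneous Dirichlet condition, the boundary integrand must be computed precisely enough to match Grisvard's structure, and one must verify that \emph{every} boundary contribution --- not merely the leading curvature one --- is either of the good sign or a pure tangential divergence; in particular the terms coming from $\nabla_{\mathrm T}(w\cdot\nu)$ have to be shown to vanish using $u\equiv0$ on $(\partial\Omega)_T\cap Q_r(z_o)$. A secondary difficulty is to carry Steps~2--3 through with all constants depending on the domain only through $\Theta$, which forces the use of the convex versions of Sobolev's inequality and of the cone condition, and to track both the scaling deficit and the precise power of $\|b\|_{L^\sigma}$ at each stage.
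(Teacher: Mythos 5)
Your outline follows the paper's own route (differentiated energy estimate with Grisvard's identity plus convexity, then Moser iteration with the convex-domain Sobolev inequality), but two steps would fail as you describe them. First, the boundary term: after applying Lemma~\ref{lem:Grisvard} to $w=\nabla u^i$ (whose tangential part vanishes), what survives on $(\partial\Omega)_T\cap Q_\rho(z_o)$ is \emph{not} just a signed curvature term plus tangential divergences. You get the good-signed piece $-(\trace\boldsymbol B)(u_\nu^i)^2$ together with $u_\nu^i\,\Delta u^i$, and recombining the latter with the $\mathbf a'$-part of the coefficients yields $\sum_i u^i_\nu\sum_\gamma\big[\mathbf a(|Du|)u^i_{x_\gamma}\big]_{x_\gamma}$, which has no sign and is not a tangential divergence. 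The paper disposes of it by evaluating the system itself on the lateral boundary: it equals $u_\nu\cdot\partial_t u-u_\nu\cdot b$, which vanishes because $u\equiv0$ on $(\partial\Omega)_T\cap Q_{2\rho}(z_o)$ forces $\partial_t u=0$ there, while $\spt b\Subset\Omega\times\R$ makes $b$ vanish near $\partial\Omega$ (this is the real reason that support hypothesis is in the statement, not only to avoid differentiating $b$). Your proposed criterion ``either of the good sign or a pure tangential divergence'' cannot close this step; the missing idea is the use of the PDE on the boundary.

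Second, the singular range $\tfrac{2n}{n+2}<p<2$: starting the iteration ``from $q_0=p/2$'' is not available. The sup/time terms in the energy estimate always carry the exponent $2+2\alpha$ (your $v^{q+1}$), so each reverse-H\"older step has $\bH(Du)^{\max\{p,2\}+2\alpha}$ on the right-hand side; for $p<2$ the iteration therefore only produces a sup bound in terms of $\biint(1+|Du|^2)\,\dx\dt$, which is \emph{not} controlled by $\biint(1+|Du|^p)\,\dx\dt$. The paper needs a separate final interpolation: bound part of the integrand by its supremum, apply Young's inequality with exponents $\frac{4-n(2-p)}{2(2-p)}$ and $\frac{4-n(2-p)}{p(n+2)-2n}$ (possible exactly because $p>\frac{2n}{n+2}$), and absorb the sup via the standard iteration lemma over radii $\frac{\rho}{2}\le r<s\le\frac{3\rho}{4}$; it is this step, not a mere compounding of energy factors along the Moser chain, that produces the deficit $d=\frac{2p}{p(n+2)-2n}$. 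For $p\ge2$ your scheme is essentially the paper's and goes through once the boundary term is handled as above.
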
 

The proof is given in the following subsections.

\subsection{Energy estimates for second order derivatives}\label{sec:energy-est}
The first step in the proof of Proposition~\ref{prop:apriori} is the
derivation of an energy  estimate
for smooth solutions to the parabolic system
\eqref{reg: parabolic system}.

\begin{proposition}[Energy estimate for second derivatives] \label{prop:energy}
 Suppose the hypotheses in Proposition~\ref{prop:apriori} hold.
  Then, for  any non-negative increasing
 $C^1$-function $\boldsymbol\Phi \colon \R_{\ge 0} \to  \R_{\ge 0} $,
 any cut-off function
 $\phi\in C_0^\infty (B_\rho(x_o),\R_{\ge0})$  and any non-negative Lipschitz continuous function $\chi\colon [t_o-\rho^2,t_o]\to 
 \R_{\ge 0}$ we have the estimate
 \begin{align}\label{eq:energy-est-sys1}\nonumber
       \iint_{\Omega_T\cap
         Q_\rho}&
       \chi\phi^2\bigg[\partial_t\big[\boldsymbol\Psi(|Du|)\big]
       +
	\tfrac12 
	\boldsymbol \Phi(|Du|)
	 \sum_{\alpha,\beta,\gamma=1}^n \sum_{i,j=1}^N
	\mathbf{b}_{\alpha\beta}^{ij}
	u^i_{x_\alpha x_\gamma }u^j_{x_\beta x_\gamma}
	\bigg]\dx\dt\\\nonumber
	&\le
	2\iint_{\Omega_T\cap Q_\rho} \chi \boldsymbol \Phi(|Du|)
	 \sum_{\alpha,\beta,\gamma=1}^n \sum_{i,j=1}^N
	 \mathbf{b}_{\alpha\beta}^{ij}
         \phi_{x_\alpha}u_{x_\gamma}^i \phi_{x_\beta}
         u_{x_\gamma}^j\dx\dt\\[4pt]
         &\phantom{\le\,}
	\,+\iint_{\Omega_T\cap Q_\rho}
	\chi\phi^2 \boldsymbol \Phi(|Du|)Du\cdot Db\,\dx\dt,
\end{align}
where we abbreviated
\begin{equation}\label{def-chi}
        \boldsymbol\Psi(s):=\int_0^{s}\boldsymbol\Phi(\tau)\tau\d\tau
\end{equation}
and
\begin{equation}\label{def:b}
  \mathbf{b}_{\alpha\beta}^{ij}
  :=
	\mathbf{a}(|Du|)\delta_{\alpha\beta}\delta^{ij}
	+
	\frac{\mathbf{a}'(|Du|)}{|Du|}
        u_{x_\alpha}^i u_{x_{\beta}}^j
      \end{equation}
      for $\alpha,\beta=1,\ldots, n$ and $i,j=1,\ldots, N$. 

\end{proposition}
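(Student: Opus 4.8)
The plan is to differentiate the parabolic system \eqref{reg: parabolic system} with respect to $x_\gamma$, test the resulting system with $\chi\phi^2\boldsymbol\Phi(|Du|)u_{x_\gamma}$, and sum over $\gamma=1,\dots,n$. Since $u$ is assumed to be $C^3$ up to the boundary and $\mu>0$, all computations are justified classically. Writing $v:=u_{x_\gamma}$, the $\gamma$-differentiated system reads $\partial_t v^i-\sum_\alpha\big[\sum_{\beta,j}\mathbf b^{ij}_{\alpha\beta}v^j_{x_\beta}\big]_{x_\alpha}=b^i_{x_\gamma}$, where $\mathbf b^{ij}_{\alpha\beta}$ is exactly the coefficient matrix \eqref{def:b} arising from $\partial_{\xi^j_\beta}[\mathbf a(|\xi|)\xi^i_\alpha]$ evaluated at $\xi=Du$.

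The heart of the argument is the integration by parts in the elliptic term. I would integrate $\sum_\alpha\big[\sum_{\beta,j}\mathbf b^{ij}_{\alpha\beta}u^j_{x_\beta x_\gamma}\big]_{x_\alpha}$ against $\chi\phi^2\boldsymbol\Phi(|Du|)u^i_{x_\gamma}$ over $\Omega_T\cap Q_\rho$. This produces three kinds of terms: (i) the ``good'' quadratic term $-\chi\phi^2\boldsymbol\Phi(|Du|)\sum\mathbf b^{ij}_{\alpha\beta}u^i_{x_\alpha x_\gamma}u^j_{x_\beta x_\gamma}$ together with an extra contribution coming from differentiating $\boldsymbol\Phi(|Du|)$, which (using $\partial_{x_\alpha}|Du|=\sum_{i,\gamma}u^i_{x_\gamma}u^i_{x_\alpha x_\gamma}/|Du|$ and the structure of $\mathbf b$) is again sign-definite and can be absorbed or discarded; (ii) the term with derivatives falling on $\phi^2$, namely $2\chi\phi\boldsymbol\Phi(|Du|)\sum\mathbf b^{ij}_{\alpha\beta}\phi_{x_\beta}u^j_{x_\gamma}u^i_{x_\alpha x_\gamma}$, which after a Cauchy--Schwarz (Young) splitting with respect to the positive-definite form $\mathbf b$ yields, up to absorbing half of the good term, precisely the first term on the right-hand side of \eqref{eq:energy-est-sys1}; (iii) a \emph{boundary integral} over $(\partial\Omega)_T\cap Q_\rho$. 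For the time term, testing $\partial_t v^i$ against $\chi\phi^2\boldsymbol\Phi(|Du|)v^i$ and summing over $\gamma$ gives $\chi\phi^2\partial_t\boldsymbol\Psi(|Du|)$ with $\boldsymbol\Psi$ as in \eqref{def-chi}, since $\sum_{i,\gamma}\boldsymbol\Phi(|Du|)u^i_{x_\gamma}\partial_t u^i_{x_\gamma}=\boldsymbol\Phi(|Du|)|Du|\,\partial_t|Du|=\partial_t\boldsymbol\Psi(|Du|)$. The right-hand side $b^i_{x_\gamma}$ tested against $\chi\phi^2\boldsymbol\Phi(|Du|)u^i_{x_\gamma}$ and summed gives exactly $\chi\phi^2\boldsymbol\Phi(|Du|)Du\cdot Db$, the last term in \eqref{eq:energy-est-sys1}.

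The main obstacle — and the only place convexity enters — is showing that the boundary integral in (iii) has a favorable sign and may be dropped. On $(\partial\Omega)_T\cap Q_\rho$ one has $u\equiv0$, hence the tangential derivatives of $u$ vanish there, so $Du(x,t)=\partial_\nu u(x,t)\otimes\nu(x)$ is purely normal along the boundary; this is the key geometric fact I would establish first (differentiating the identity $u|_{\partial\Omega}=0$ along tangential directions). The boundary term produced by integration by parts has the schematic form $\int\chi\phi^2\boldsymbol\Phi(|Du|)\sum_{\alpha,\beta,i,j}\mathbf b^{ij}_{\alpha\beta}u^i_{x_\alpha x_\gamma}u^j_{x_\gamma}\nu_\beta\,\dHn$, and I would recast it by applying Lemma~\ref{lem:Grisvard} to a suitable vector field $w$ built from $\partial_\nu u$ and $\nu$ (componentwise in the target index $i$), after spelling out $\mathbf b^{ij}_{\alpha\beta}u^i_{x_\alpha x_\gamma}u^j_{x_\gamma}\nu_\beta$ in terms of $w\cdot\nu$, $\Div w$ and $\partial_w w\cdot\nu$. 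The surface-divergence term $\Div_{\mathrm T}((w\cdot\nu)w_{\mathrm T})$ integrates to zero over the closed boundary (or is killed by $\phi$), the terms $-2w_{\mathrm T}\cdot\nabla_{\mathrm T}(w\cdot\nu)$ vanish because $w_{\mathrm T}=0$ thanks to the normality of $Du$, and what remains, $-(\trace\boldsymbol B)(w\cdot\nu)^2-\boldsymbol B(w_{\mathrm T},w_{\mathrm T})$, is $\le 0$ by the convexity sign convention $\boldsymbol B_x(\eta,\eta)\le 0$ and $\trace\boldsymbol B\le 0$. Since it appears on the correct side of the inequality after the integration by parts, discarding it only strengthens the estimate. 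Once the boundary term is disposed of and the gradient-of-$\boldsymbol\Phi$ contribution is checked to have the right sign, collecting the remaining terms and moving the absorbed half of the good term to the left yields precisely \eqref{eq:energy-est-sys1}.
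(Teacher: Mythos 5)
Your overall scheme—differentiate \eqref{reg: parabolic system}, test with $\chi\phi^2\boldsymbol\Phi(|Du|)u_{x_\gamma}$, sum over $\gamma$, absorb the $D\phi$ cross term by Young's inequality for the positive definite form $\mathbf{b}$, and discard the sign-definite contribution from differentiating $\boldsymbol\Phi(|Du|)$—is exactly the paper's computation (organized there through the operator $\mathrm L[\mathbf g(|Du|)]$), and those parts are fine. The gap is in your treatment of the boundary term. Lemma~\ref{lem:Grisvard} is an identity for the \emph{combination} $(w\cdot\nu)\Div w-\partial_w w\cdot\nu$, but the boundary integrand produced by your integration by parts is not of this form: with $w=\nabla u^i$, the part of $\mathbf b^{ij}_{\alpha\beta}$ coming from $\mathbf a(|Du|)\delta_{\alpha\beta}\delta^{ij}$ contributes $\mathbf a(|Du|)\,\partial_w w\cdot\nu$ \emph{alone}. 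Applying Grisvard (with $w_{\mathrm T}=0$, which you correctly justified) only trades this for
\begin{equation*}
  \partial_w w\cdot\nu
  =
  (w\cdot\nu)\Div w+(\trace\boldsymbol B)(w\cdot\nu)^2
  \le
  u^i_\nu\,\Delta u^i ,
\end{equation*}
so after the curvature terms are discarded you are still left with $u^i_\nu\Delta u^i$, and the $\mathbf a'$-part of $\mathbf b$ likewise leaves terms containing second \emph{normal} derivatives of $u$ (e.g.\ $u^i_{\nu\nu}$) evaluated on $(\partial\Omega)_T\cap Q_\rho$. These are interior quantities with no sign; convexity does not touch them, and $\phi$ does not vanish on $\partial\Omega$, so they cannot simply be dropped as you claim.

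The missing idea is that these leftover terms recombine exactly into $u_\nu\cdot\sum_\gamma\big[\mathbf a(|Du|)Du\big]_{x_\gamma}$-type expressions, i.e.\ into the spatial divergence appearing in the system itself, so that on the lateral boundary one can invoke \eqref{reg: parabolic system} together with $\partial_t u\equiv0$ there (since $u\equiv0$ on $(\partial\Omega)_T\cap Q_{2\rho}$ for all times) and $\spt b\Subset\Omega\times\R$ to conclude that the full boundary integrand is bounded above by $u_\nu\cdot(\partial_t u-b)=0$. Without this use of the equation at the boundary—which is where the hypotheses on $\partial_t u$ and on the support of $b$ enter and which is the decisive step in the paper's proof—your argument for discarding the boundary integral does not close.
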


\begin{remark}
  \upshape
  We note that the monotonicity conditions~\eqref{assumption:a'}
  and~\eqref{bounds-h-H} imply the ellipticity and growth estimates
  \begin{equation}\label{b-elliptic}
    c^{-1}m\big(\mu^2+|Du|^2\big)^{\frac{p-2}2}|\lambda|^2
    \le
    \sum_{\alpha,\beta=1}^n\sum_{i,j=1}^N
    \mathbf{b}_{\alpha\beta}^{ij}\lambda_\alpha^i\lambda_\beta^j
    \le
    cM\big(\mu^2+|Du|^2\big)^{\frac{p-2}2}|\lambda|^2
  \end{equation}  
  for any $\lambda\in\R^{Nn}$, cf.~\eqref{monoton-coefficients}. Therefore, the preceding proposition
  yields an energy estimate of the form
   \begin{align}\label{eq:energy-est-sys2}
   \nonumber
       \iint_{\Omega_T\cap
         Q_\rho}
       \chi\phi^2\bigg[\partial_t\big[\boldsymbol\Psi(|Du|)\big]
       &+
	\tfrac{m}{C}\boldsymbol \Phi(|Du|)\big(\mu^2+|Du|^2\big)^{\frac{p-2}2}|D^2u|^2
	\bigg]\dx\dt\\\nonumber
	&\le
	CM\iint_{\Omega_T\cap Q_\rho}\chi \boldsymbol \Phi(|Du|)
	 \big(\mu^2+|Du|^2\big)^{\frac p2}|D\phi|^2\dx\dt\\[4pt]
         &\phantom{\le\,}
	+\iint_{\Omega_T\cap Q_\rho}\chi\phi^2\boldsymbol \Phi(|Du|)Du\cdot Db\,\dx\dt,
      \end{align}
with a constant $C=C(p)\ge 1$.
This  will be the starting point for the Moser iteration. \hfill $\Box$
\end{remark}

\begin{remark}\upshape
  It is crucial that Proposition~\ref{prop:energy} holds for cylinders
  with arbitrary centers $z_o$, not only for points in the lateral boundary. This allows us to apply it on regularized
  domains $\Omega_\eps\supset\Omega$, with the choice $z_o\in\partial \Omega\times (0,T)$
  independently of $\eps>0$.  \hfill $\Box$
\end{remark}


\begin{proof}[{\rm\bfseries Proof of Proposition~\ref{prop:energy}.}]
  For the sake of convenience,
  we omit the reference to the center $z_o$ in our notation.
  We write $v_e$ for the directional derivative of a function $v$ in
 the direction $e\in\R^n$.
  We start by differentiating \eqref{reg: parabolic system}
in the direction $e$. In view of the identities
\begin{equation}\label{eq:lambda-derivative}
	(|Du|)_e 
	= 
	\sum_{j=1}^N\sum_{\beta =1}^n\frac{u_{x_\beta}^j u_{x_\beta e}^j}{|Du|}
	\quad
	\mbox{and}
	\quad
	\big[ \mathbf{a}(|Du|)\big]_{e}
	=
	\frac{\mathbf{a}'(|Du|)}{|Du|}
	\sum_{j=1}^N\sum_{\beta =1}^n  u_{x_\beta}^ju_{x_\beta e}^j
\end{equation}
we obtain for $i=1,\ldots,N$ that
\begin{align}\label{eq:diff-system}
  (b^i)_e&
  =
  \partial_t u_e^i -
  \sum_{\alpha=1}^n \Big[\mathbf{a}(|Du|) u_{x_{\alpha}}^i\Big] _{x_{\alpha}e}
	=
        \partial_t u_e^i
        -
	\sum_{\alpha,\beta=1}^n\sum_{j=1}^N
	\big[\mathbf{b}_{\alpha\beta}^{ij}(x,t)u_{x_\beta e}^j\big]_{x_\alpha}.
\end{align}
In the last line, we used the abbreviation introduced in \eqref{def:b}.     
Next, we compute 
\begin{align*}
	\mathbf I
	&:=
	\sum_{\alpha,\beta,\gamma=1}^n\, \sum_{i,j=1}^N
	\bigg[
	\frac{\mathbf{a}'(|Du|)}{|Du|}
	u_{x_\alpha}^iu_{x_\gamma}^iu_{x_\beta}^ju_{x_\beta x_\gamma}^j
	\bigg]_{x_\alpha} \\
	&\,=
	\sum_{\alpha,\beta,\gamma=1}^n\, \sum_{i,j=1}^N
	\frac{\mathbf{a}'(|Du|)}{|Du|}
	u_{x_\alpha}^iu_{x_{\alpha}x_\gamma}^iu_{x_\beta}^ju_{x_\beta x_\gamma}^j\\
	&\,\phantom{=\,}
	+
	\sum_{\alpha,\beta,\gamma=1}^n\, \sum_{i,j=1}^N
	u_{x_\gamma}^i
	\bigg[
	\frac{\mathbf{a}'(|Du|)}{|Du|}
	u_{x_\alpha}^iu_{x_\beta}^ju_{x_\beta x_\gamma}^j
	\bigg]_{x_\alpha} \\
	&\,=
	\sum_{\alpha,\beta,\gamma=1}^n\, \sum_{i,j=1}^N
	\Big[\mathbf{b}_{\alpha\beta}^{ij}
	u_{x_{\alpha}x_\gamma}^iu_{x_\beta x_\gamma}^j +
	u_{x_\gamma}^i \big[\mathbf{b}_{\alpha\beta}^{ij}
	u_{x_\beta x_\gamma}^j\big]_{x_\alpha}\Big] \\
	&\,\phantom{=\,} -
	\sum_{\alpha,\gamma=1}^n\, \sum_{i=1}^N
	\Big[
	\mathbf{a}(|Du|) u_{x_{\alpha}x_\gamma}^iu_{x_\alpha x_\gamma}^i +
	u_{x_\gamma}^i 
	\big[
	\mathbf{a}(|Du|) u_{x_\alpha x_\gamma}^i\big]_{x_\alpha} \Big]. 
\end{align*}
 We note that the last term on the right-hand side of the preceding identity is equal to $\Delta [\mathbf g(|Du|)]$, where $\Delta$ stands for the Laplacian, and $\mathbf{g}$ is defined by
$$
	\mathbf g(s):=\int_0^sr\mathbf a(r)\dr\quad \mbox{for any $s>0$.}
$$
Using the differentiated system \eqref{eq:diff-system}
with $e =e_\gamma$ we thus obtain
\begin{align}\label{bfI}
	\mathbf I
	=
	\sum_{\alpha,\beta,\gamma=1}^n\, \sum_{i,j=1}^N
	\mathbf{b}_{\alpha\beta}^{ij}
	u^i_{x_\alpha x_\gamma }u^j_{x_\beta x_\gamma} -
	\Delta \big[\mathbf g(|Du|)\big] +
    \tfrac12\partial_t|Du|^2 -
	Du\cdot Db.
\end{align}
On the other hand, a direct calculation gives
\begin{align}\label{LG}
	\mathbf I + \Delta\big[\mathbf g(|Du|)\big] 
	=
	\mathrm L\big[ \mathbf g(|Du|)\big] ,
\end{align}
where $\mathrm L$ denotes the second order elliptic differential operator  defined by
\begin{equation}\label{def:op-L-system}
	\mathrm L[v] 
	:=
	\sum_{\alpha,\gamma =1}^n
	\big[ \mathbf c_{\alpha\gamma}v_{x_\gamma}\big]_{x_\alpha},
\end{equation}
with coefficients
\begin{align}\label{def:coeff-c-system}
  	\mathbf c_{\alpha\gamma}(x)
  	&:=
 	\frac{1}{\mathbf{a}(|Du|)}
	\bigg[
	\mathbf{a}(|Du|) \delta_{\alpha\gamma}
	+
        \frac{\mathbf{a}'(|Du|)}{|Du|}
        \sum_{i=1}^N
	u_{x_\alpha}^i u_{x_{\gamma}}^i\bigg]\\
	&\,=
	\delta_{\alpha\gamma}
	+
	\frac{\mathbf{a}'(|Du|)}{|Du|\,\mathbf{a}(|Du|)}
        \sum_{i=1}^Nu_{x_\alpha}^i u_{x_{\gamma}}^i\nonumber .
\end{align}
Joining \eqref{bfI} and \eqref{LG}, we find
\begin{align}\label{eq:sub-1-system}
	\tfrac12\partial_t|Du|^2 +
	\sum_{\alpha,\beta,\gamma=1}^n\,\sum_{i,j=1}^N
	\mathbf{b}_{\alpha\beta}^{ij}
	u^i_{x_\alpha x_\gamma }u^j_{x_\beta x_\gamma}
	=
	\mathrm L\big[ \mathbf g(|Du|)\big] +
    Du\cdot Db.
\end{align}
We now multiply this identity by $\phi^2(x)$, where $\phi\in C^\infty_0(B_\rho,\R_{\ge 0})$ is a smooth cut-off function. In the resulting equation we examine the diffusion term on the right-hand side. 
We start by noting that
\begin{align}\label{replace-c-by-D^2f}
  \sum_{\gamma=1}^n\mathbf c_{\alpha\gamma} \mathbf g(|Du|)_{x_\gamma}
  &=
  \sum_{\beta,\gamma=1}^n\sum_{j=1}^N
  \mathbf{a}(|Du|)
  \mathbf c_{\alpha\gamma} u_{x_\beta}^j u_{x_\beta
    x_\gamma}^j\\\nonumber
  &=
  \sum_{\beta,\gamma=1}^n\sum_{i,j=1}^N
  \mathbf{b}_{\alpha\beta}^{ij}
  u_{x_\gamma}^i u_{x_\beta x_\gamma}^j
\end{align}
for $\alpha=1,\ldots,n$, where we used first 
\eqref{eq:lambda-derivative}$_1$ with $e=e_\beta$ and then 
the definition \eqref{def:coeff-c-system}
for the coefficients  $\mathbf c_{\alpha\beta}$ and~\eqref{def:b}. This allows us to compute 
\begin{align*}
	\phi^2 \mathrm L\big[ \mathbf g(|Du|)\big]
  	&=
  	\sum_{\alpha,\gamma =1}^n\Big[ \phi^2
  	\mathbf c_{\alpha\gamma}\mathbf g(|Du|)_{x_\gamma}\Big]_{x_\alpha} -
  	2\phi 
  	\sum_{\alpha,\gamma=1}^n  \phi_{x_\alpha}\mathbf c_{\alpha\gamma}
  	\mathbf g(|Du|)_{x_\gamma} \\
  	&=
  	\mathbf{II} -
	2\phi 
	\sum_{\alpha,\beta,\gamma=1}^n  \sum_{i,j=1}^N
    \mathbf{b}_{\alpha\beta}^{ij}
    \phi_{x_\alpha}u_{x_\gamma}^iu_{x_\beta x_\gamma}^j ,
\end{align*}
with the obvious meaning of $\mathbf{II}$.      
Inserting this identity into \eqref{eq:sub-1-system} multiplied by
$\phi^2$ as described above, we deduce
\begin{align*}
  	\phi^2 & 
  	\bigg[ \tfrac12 \partial_t|Du|^2 +
    \sum_{\alpha,\beta,\gamma=1}^n \sum_{i,j=1}^N
    \mathbf{b}_{\alpha\beta}^{ij} 
  	u^i_{x_\alpha x_\gamma }u^j_{x_\beta x_\gamma}\bigg]\\
  	&=
  	\mathbf{II} -
	2\phi \sum_{\alpha,\beta,\gamma=1}^n \sum_{i,j=1}^N
	\mathbf{b}_{\alpha\beta}^{ij}
    \phi_{x_\alpha}u_{x_\gamma}^iu_{x_\beta x_\gamma}^j +
	\phi^2Du\cdot Db .
\end{align*}
Next, we
note that due to~\eqref{b-elliptic}, the matrix
$(\mathbf{b}_{\alpha\beta}^{ij})$ defines a
positive definite bilinear form on $\R^{Nn}$, which grants a Young type inequality for quadratic forms. That is,
\begin{align*}
  2\phi & \bigg|\sum_{\alpha,\beta=1}^n\,\sum_{i,j=1}^N 
   \mathbf{b}_{\alpha\beta}^{ij}
        \phi_{x_\alpha}u_{x_\gamma}^iu_{x_\beta x_\gamma}^j\bigg|\\
   &\le
   \tfrac12\phi^2\sum_{\alpha,\beta=1}^n\,\sum_{i,j=1}^N  
   \mathbf{b}_{\alpha\beta}^{ij}u_{x_\alpha
     x_\gamma}^iu_{x_\beta x_\gamma}^j
   +2\sum_{\alpha,\beta=1}^n\,\sum_{i,j=1}^N 
   \mathbf{b}_{\alpha\beta}^{ij}\phi_{x_\alpha}u_{x_\gamma}^i\phi_{x_\beta}u_{x_\gamma}^j,
\end{align*}
for any $\gamma=1,\ldots,n$. 
Using this estimate in the identity above and re-absorbing the term containing the second
derivatives of $u$ into the left-hand side yields
\begin{align*}
 	\tfrac12\phi^2 & 
	\bigg[
   	\partial_t|Du|^2 +
	\sum_{\alpha,\beta,\gamma=1}^n \sum_{i,j=1}^N
	\mathbf{b}_{\alpha\beta}^{ij}
	u^i_{x_\alpha x_\gamma }u^j_{x_\beta x_\gamma}\bigg]\\
	&\le
	\mathbf{II} +
	2\sum_{\alpha,\beta,\gamma=1}^n \sum_{i,j=1}^N
	\mathbf{b}_{\alpha\beta}^{ij}
	\phi_{x_\alpha}u_\gamma^i \phi_{x_\beta} u_{x_\gamma}^j +
	\phi^2Du\cdot Db.
\end{align*}
Next, we multiply this identity by $\chi(t)\boldsymbol\Phi(|Du|)$, where $\chi\colon [t_o-\rho^2,t_o]\to\R_{\ge 0}$ is a non-negative Lipschitz continuous function and $\boldsymbol\Phi\in C^1(\R_{\ge 0},\R_{\ge 0})$ is
increasing. 
For the term involving the time derivative we compute 
\begin{align*}
   	\tfrac12\boldsymbol\Phi (|Du|)\partial_t|Du|^2
   	=
   	\boldsymbol\Phi (|Du|)|Du|\partial_t|Du|
   	=
   	\partial_t\bigg[\int_0^{|Du|}\boldsymbol\Phi(\tau)\tau\,d\tau\bigg]
   	=
   	\partial_t\big[\boldsymbol\Psi(|Du|)\big],   
\end{align*}
with the function $\boldsymbol\Psi$ defined in~\eqref{def-chi} and obtain 
\begin{align}\label{eq:sub-2-system}\nonumber
 	\chi\phi^2 & 
	\bigg[
   	\partial_t\big[\boldsymbol\Psi(|Du|)\big] +
	\tfrac12\boldsymbol\Phi (|Du|)\sum_{\alpha,\beta,\gamma=1}^n \sum_{i,j=1}^N
	\mathbf{b}_{\alpha\beta}^{ij}
	u^i_{x_\alpha x_\gamma }u^j_{x_\beta x_\gamma}\bigg]\\
	&\le
	\chi \boldsymbol\Phi(|Du|) \bigg[
	\mathbf{II} +
	2\sum_{\alpha,\beta,\gamma=1}^n \sum_{i,j=1}^N
	\mathbf{b}_{\alpha\beta}^{ij}
	\phi_{x_\alpha}u_\gamma^i \phi_{x_\beta} u_{x_\gamma}^j +
	\phi^2Du\cdot Db \bigg].
\end{align}
Next, we analyze the term containing $\mathbf{II}$, which will result in a boundary term. Indeed,
\begin{align*}
	\boldsymbol\Phi (|Du|)\mathbf{II}
	&=
	\boldsymbol\Phi (|Du|)\sum_{\alpha,\gamma=1}^n\Big[ \phi^2 
	\mathbf c_{\alpha\gamma} \mathbf g(|Du|)_{x_\gamma}\Big]_{x_\alpha}\\
	&=
	\sum_{\alpha,\gamma=1}^n\Big[ \phi^2 \boldsymbol\Phi (|Du|)
	\mathbf c_{\alpha\gamma}\mathbf g(|Du|)_{x_\gamma}\Big]_{x_\alpha} -
	\phi^2 \sum_{\alpha,\gamma=1}^n \mathbf c_{\alpha\gamma}\boldsymbol\Phi (|Du|)_{x_\alpha}
	\mathbf g(|Du|)_{x_\gamma}\\
	&\le
	\sum_{\alpha,\gamma=1}^n\Big[ \phi^2 \boldsymbol\Phi (|Du|)
	\mathbf c_{\alpha\gamma}\mathbf g(|Du|)_{x_\gamma}\Big]_{x_\alpha}.
\end{align*}
In the last line we used 
\begin{align*}
 	\sum_{\alpha,\gamma=1}^n \mathbf c_{\alpha\gamma}
	\boldsymbol\Phi (|Du|)_{x_\alpha}
	\mathbf g(|Du|)_{x_\gamma}
	=
	\boldsymbol\Phi^\prime (|Du|) \mathbf g^\prime(|Du|)
	\sum_{\alpha,\gamma=1}^n
	\mathbf c_{\alpha\gamma}|Du|_{x_\alpha}|Du|_{x_\gamma}
	\ge
	0,
\end{align*}
since $\boldsymbol\Phi$ and $\mathbf g$ are both increasing and that the coefficients
$\mathbf c_{\alpha\gamma}$ are positive definite. 

We now integrate  $\chi \boldsymbol\Phi (|Du|)\mathbf{II}$ over $\Omega_T\cap Q_\rho$ and perform an integration
by parts. This leads to a boundary integral. More
 precisely, denoting by $\nu$ the outward  unit normal vector on $\partial\Omega$, we have
 \begin{align}\label{integral-of-II}
 	&\iint_{\Omega_T\cap Q_\rho}\chi\boldsymbol\Phi (|Du|)\mathbf{II}\,\dx\dt\\
	\nonumber
	&\qquad\le
	\iint_{(\partial\Omega)_T\cap Q_\rho}\chi\phi^2\boldsymbol\Phi (|Du|)
	\sum_{\alpha,\gamma=1}^n \mathbf c_{\alpha\gamma}\mathbf g(|Du|)_{x_\gamma}\nu_\alpha\,\dHn\dt\\\nonumber
	&\qquad=
	\iint_{(\partial\Omega)_T\cap Q_\rho}
	\chi\phi^2\boldsymbol\Phi (|Du|)
	\sum_{\alpha,\beta,\gamma=1}^n \sum_{i,j=1}^N\mathbf{b}_{\alpha\beta}^{ij}
	u_{x_\gamma}^iu_{x_\beta x_\gamma}^j\nu_\alpha\,\dHn\dt.
 \end{align}
 The last step follows from~\eqref{replace-c-by-D^2f}.
 Now, we analyze the integrand on the right-hand side by recalling the explicit form
of the coefficients. In view of~\eqref{def:b}, we obtain
\begin{align*}
  \sum_{\alpha,\beta,\gamma=1}^n & \sum_{i,j=1}^N
  \mathbf{b}_{\alpha\beta}^{ij}
	u_{x_\gamma}^iu_{x_\beta x_\gamma}^j\nu_\alpha\\
	&=
	\sum_{\alpha,\beta,\gamma=1}^n \sum_{i,j=1}^N
	\bigg[
	\mathbf{a}(|Du|)\delta_{\alpha\beta}\delta^{ij}
        +\frac{\mathbf{a}'(|Du|)}{|Du|}
	u^i_{x_\alpha}u^j_{x_\beta}\bigg]u_{x_\gamma}^iu_{x_\beta x_\gamma}^j\nu_\alpha\\
	&=
	\sum_{\alpha,\gamma=1}^n\sum_{i=1}^N
	\mathbf{a}(|Du|)u_{x_\gamma}^iu_{x_\alpha x_\gamma}^i\nu_\alpha
	+
	\sum_{i=1}^N
	u_\nu^i
	\sum_{\beta,\gamma=1}^n\sum_{j=1}^N
	\frac{\mathbf{a}'(|Du|)}{|Du|}
	u^i_{x_\gamma}u^j_{x_\beta}u^j_{x_\beta x_\gamma}.
\end{align*}
At this point,
we apply the differential geometric identity in
Lemma~\ref{lem:Grisvard} to the vector fields $w:=\nabla u^i$, $i=1,\ldots,N$.
In our case, the tangential components
of $w$ 
vanish since $u\equiv0$ on $(\partial\Omega)_T\cap Q_\rho$.
Hence, Lemma~\ref{lem:Grisvard} yields the identity 
\begin{equation*}
	\Delta u^i u_\nu^i- 
	\sum_{\alpha,\gamma=1}^n
	u_{x_\gamma}^iu_{x_\alpha x_\gamma}^i\nu_\alpha
	=-(\trace\,\boldsymbol B) \big(u_\nu^i\big)^2\quad \mbox{on $(\partial\Omega)_T\cap Q_\rho $}
\end{equation*}
for $i=1, \dots ,N$.
Here, $\boldsymbol B$ denotes the second fundamental form of $\partial\Omega$. Note that
$\mathrm{trace}\,\boldsymbol B\le 0$, since $\Omega$ is convex. Therefore, in the above identity
the right-hand side is non-negative. This allows us to continue in estimating the boundary 
 integral above.
 More precisely, on $(\partial\Omega)_T\cap Q_\rho $ we obtain 
 \begin{align*}
   &\sum_{\alpha,\beta,\gamma=1}^n \sum_{i,j=1}^N
   \mathbf{b}_{\alpha\beta}^{ij}
	u_{x_\gamma}^iu_{x_\beta x_\gamma}^j\nu_\alpha\\
	&\quad\quad\le
	\sum_{i=1}^N
	u_\nu^i
	\sum_{\beta,\gamma=1}^n\sum_{j=1}^N\bigg[
	\mathbf{a}(|Du|)\delta_{\gamma\beta}\delta^{ij}+
	\frac{\mathbf{a}'(|Du|)}{|Du|}
	u^i_{x_\gamma}u^j_{x_\beta}\bigg]u^j_{x_\beta x_\gamma}\\
	&\quad\quad=
	\sum_{i=1}^N
	u_\nu^i
	\sum_{\beta,\gamma=1}^n\sum_{j=1}^N
	\mathbf{b}_{\gamma\beta}^{ij}u^j_{x_\beta x_\gamma}
	=
        \sum_{i=1}^N
	u_\nu^i
	\sum_{\gamma=1}^n
	\big[\mathbf{a}(|Du|) u_{x_\gamma}^i\big]_{x_\gamma} \\
        &\quad\quad=
        u_\nu\cdot\partial_tu-u_\nu\cdot b=0.
      \end{align*}
In the last line, we used the parabolic system      
\eqref{reg: parabolic system},
the fact $\partial_tu\equiv0$ on
$(\partial\Omega)_T\cap Q_\rho$ and $\spt b\Subset\Omega_T$.
Recalling~\eqref{integral-of-II}, we deduce that
\begin{align*}
 	\iint_{\Omega_T\cap Q_\rho}
	\chi\boldsymbol\Phi (|Du|)\mathbf{II}\,\dx\dt
	&\le 0.
\end{align*}
Therefore, integrating \eqref{eq:sub-2-system} over $\Omega_T\cap Q_\rho$ we obtain
\begin{align*}
 	& \iint_{\Omega_T\cap Q_\rho} \chi\phi^2 
	\bigg[
   	\partial_t\big[\boldsymbol\Psi(|Du|)\big] +
	\tfrac12\boldsymbol\Phi (|Du|)\sum_{\alpha,\beta,\gamma=1}^n \sum_{i,j=1}^N
	\mathbf{b}_{\alpha\beta}^{ij}
	u^i_{x_\alpha x_\gamma }u^j_{x_\beta x_\gamma}\bigg] \dx\dt\\
	&\qquad\le
	\iint_{\Omega_T\cap Q_\rho}
	\chi \boldsymbol\Phi(|Du|) \bigg[
	2\sum_{\alpha,\beta,\gamma=1}^n \sum_{i,j=1}^N
	\mathbf{b}_{\alpha\beta}^{ij}
	\phi_{x_\alpha}u_\gamma^i \phi_{x_\beta} u_{x_\gamma}^j +
	\phi^2Du\cdot Db \bigg] \dx\dt.
\end{align*}
This finishes the proof of the proposition.
\end{proof}

\subsection{A reverse H\"older type inequality}\label{sec:iteration}
Here, we work in the setting of Proposition~\ref{prop:apriori}. Again we omit the reference to the center $z_o$
in our notation.  By $\zeta\in
C^1(\R_{\ge0},[0,1])$ we denote a cut-off function with respect to the
time variable that satisfies $\zeta\equiv0$ on $[0,\frac12]$, $\zeta\equiv 1$
on $[1,\infty)$ and $0\le\zeta'\le 3$ on $[\frac12,1]$.
Moreover, we consider a cut-off function $\phi\in
C^\infty_0(B_\rho,[0,1])$ with respect to the spatial variables. 
In the energy estimate \eqref{eq:energy-est-sys2} we choose
the non-negative increasing function $\boldsymbol\Phi\colon\R_{\ge0}\to\R_{\ge0}$ in the form
\begin{equation*}
  \boldsymbol\Phi(s):=  \widetilde{\boldsymbol\Phi} \big(\sqrt{\mu^2+s^2}\big)
  \qquad\mbox{with}\qquad
  \widetilde{\boldsymbol\Phi}(\tau):=\zeta^2(\tau)\tau^{2\alpha}\quad\text{ for some }\al\ge0.
\end{equation*}
We could omit the cut-off function $\zeta$ in the case $\mu=1$. For the sake of a unified approach we proceed using $\zeta$ in any case. 
In the sequel we use the abbreviation
$$
  \boldsymbol{\mathcal H}(\xi):= \sqrt{\mu^2+|\xi|^2}
  \qquad\mbox{for $\xi\in\R^{Nn}$,}
$$
so that $\boldsymbol\Phi(|\xi|)=\widetilde{\boldsymbol\Phi}(\boldsymbol{\mathcal H}(\xi))$. 
With this notation, we have 
\begin{align}\label{Psi-upper-bound}
  \boldsymbol \Psi (|Du|)
	&:=
	\int_0^{|Du|} \boldsymbol\Phi(s)s \,\d s
    =
    \int_{\mu}^{\bH(Du)}\,\zeta^2(\tau)\tau^{2\alpha+1}\d\tau
	\le
	\tfrac{1}{2+2\alpha}\,\bH(Du)^{2+2\alpha}.
\end{align}
Since $\bH(0)\equiv \mu\le 1$ we deduce the lower bound
\begin{equation}
  \label{Psi-lower-bound}
  \boldsymbol \Psi (|Du|)
  \ge
  \tfrac{1}{2+2\alpha}\,\bH(Du)^{2+2\alpha} -
  \tfrac{1}{2+2\alpha}.
\end{equation}
Now, we start with estimating the second integral 
on the right-hand side of \eqref{eq:energy-est-sys2}. Since $\spt b (\cdot,t)\Subset\Omega$ for any $t\in [0,T]$ and 
since $\phi\in C^\infty_0 (B_\rho)$,  we are allowed to integrate by parts (with respect to $x$) in the integral containing $b$ and obtain
\begin{align*}
	&\sum_{i=1}^N\sum_{\alpha=1}^n\iint_{\Omega_T \cap Q_{\rho } }
	\chi\phi^2 \boldsymbol\Phi (|Du|)u^i_{x_\alpha} b^i_{x_\alpha}\,\dx\dt\\
	&=
	-\sum_{i=1}^N \sum_{\alpha=1}^n\iint_{\Omega_T \cap Q_{\rho } }\chi b^i
	\Big[\phi^2 \boldsymbol\Phi (|Du|) u^i_{x_\alpha}\Big]_{x_\alpha}\,\dx\dt\\
	&=
	-\sum_{i,j=1}^N\sum_{\alpha,\beta=1}^n\iint_{\Omega_T \cap Q_{\rho } }
	\chi\phi^2 b^i
	\bigg[\boldsymbol\Phi (|Du|) \delta_{\alpha\beta}\delta^{ij} +\boldsymbol\Phi' (|Du|)
	\frac{u^i_{x_\alpha}u^j_{x_\beta}}{|Du|}
	\bigg]u^j_{x_\alpha x_\beta}\,\dx\dt\\
	&\quad
	-2\sum_{i=1}^N\sum_{\alpha=1}^n\iint_{\Omega_T \cap Q_{\rho }}
	\chi\phi\,  b^i
	\boldsymbol\Phi (|Du|)\phi_{x_\alpha} u^i_{x_\alpha}\,\dx\dt\\
	&=:\,\mathbf I +\,\mathbf{II},
\end{align*}
with the obvious meaning of $\mathbf I$ and $\mathbf{II}$.  The first integral can be estimated as
\begin{align*}
	\mathbf I
	&\le
	c(n,N)\iint_{\Omega_T\cap Q_\rho}\chi\phi^2 |b|
    \big[\boldsymbol\Phi(|Du|)+|Du|\boldsymbol\Phi'(|Du|)\big]|D^2u|\,
    \dx\dt.
\end{align*}
To estimate the term in brackets we note that $|Du|\le\bH(Du)$, $\zeta\le 1$ and $|Du|\le\bH(Du)\le1$ whenever $\zeta'(\bH(Du))\neq0$ and finally $\bH(Du)\ge\tfrac12$ whenever $\zeta(\bH(Du))\neq0$. Therefore, we obtain 
\begin{align*}
    &\boldsymbol\Phi(|Du|)+|Du|\boldsymbol\Phi'(|Du|) \\
    &\qquad\le
    \zeta(\bH(Du))\bH(Du)^{2\alpha} 
    \big[(1+2\alpha)\zeta(\bH(Du))  +
    2 \zeta'(\bH(Du))|Du|\big] \\
    &\qquad\le
    c\,(1+2\alpha)\zeta(\bH(Du))\bH(Du)^{2\alpha} \\
    &\qquad\le
    c\,(1+2\alpha)\zeta(\bH(Du))\bH(Du)^{2\alpha+\frac{p-2+(2-p)_+}2}. 
\end{align*}
Inserting this above yields
\begin{align*}
  	\mathbf{I}
  	&\le
    c\,(1+2\alpha) \iint_{\Omega_T \cap Q_{\rho }}
    \chi\phi^2
	|b|\zeta(\bH(Du))\bH(Du)^{2\alpha+\frac{p-2+(2-p)_+}2}
	|D^2u|\,\dx\dt\\\nonumber
	&\le
	\frac{m}{2C} \iint_{\Omega_T \cap Q_{\rho } } \chi\phi^2
	\zeta^2(\bH(Du))\bH(Du)^{p+2\alpha -2}|D^2u|^2\dx\dt\\\nonumber
	&\phantom{\le\,}
	+\frac{c\,(1+2\alpha)^2}{m}\iint_{\Omega_T \cap Q_{\rho } }
	\chi\phi^2
	\bH(Du)^{2\alpha+(2-p)_+}|b|^2\dx\dt\\
    &=:
    \frac{m}{2C}\mathbf{I}_1+\frac{c\,(1+2\alpha)^2}{m}\mathbf{I}_2,
\end{align*}
for a constant $c=c(n,N,p)$.
In the last estimate we used Young's inequality.
The constant $C$ is from the energy estimate \eqref{eq:energy-est-sys2} and depends only on $p$.
The second term is bounded by
\begin{align*}
   \mathbf{II}
   &\le
   2\iint_{\Omega_T \cap Q_{\rho } }
   \chi\phi\,  |b|
   \boldsymbol\Phi (|Du|)|D\phi| |Du|\,\dx\dt\\\nonumber
   &\le 
   2\iint_{\Omega_T \cap Q_{\rho } }
   \chi\phi\, |b|\zeta^2(\bH(Du))\bH (Du)^{2\alpha+1} 
   |D\phi| \,\dx\dt\\\nonumber
   &\le 
   c\iint_{\Omega_T \cap Q_{\rho } }\chi\phi\, |b|\bH(Du)^{2\alpha+\frac{p+(2-p)_+}2} 
   |D\phi| \,\dx\dt\\
   &\le
     c\,M\iint_{\Omega_T \cap Q_{\rho } }\chi \bH (Du)^{p+2\alpha}|D\phi|^2\,\dx\dt
     +\tfrac{1}{M}\mathbf{I}_2,
\end{align*}
where, in the second-to-last step, we again used the fact $\bH(Du)\ge\frac12$ on the support of $\zeta(\bH(Du))$, and in the last step
we applied Young's inequality. 
 Using the above estimates for $\mathbf{I}$ and $\mathbf{II}$  in~\eqref{eq:energy-est-sys2} and re-absorbing the term $\frac{m}{2C}\mathbf{I}_1$
 into the left-hand side, we arrive at
\begin{align}\label{pre-W22-est}
	\nonumber
   	\iint_{\Omega_T\cap Q_\rho} &
   	\chi\phi^2 
   	\Big[\partial_t\big[\boldsymbol\Psi(|Du|)\big] +
	\tfrac{m}{2C}
	\zeta^2(\bH(Du))\bH (Du)^{p+2\alpha-2} |D^2u|^2\Big]
	\,\dx\dt\\
	&\qquad\qquad\le
	c\iint_{\Omega_T\cap Q_\rho}\chi \bH (Du)^{p+2\alpha} |D\phi|^2\,\dx\dt +
	c(1+2\alpha)^2\mathbf{I}_2,
\end{align}
where $c=c(m,M,n,N,p)$.  
In \eqref{pre-W22-est} we choose $\chi$ in the form of a product of two functions $\chi$ and $\widetilde \chi$. We choose
the first function $\chi\in W^{1,\infty}\big(  [t_o-\rho^2, t_o]\big) $ to satisfy  $0\le\chi\le1$, $\chi(t_o-\rho^2)=0$, and 
$\partial_t\chi\ge 0$, while the second one  is defined by
\begin{equation*}
	\widetilde\chi (t)
	:=
	\left\{
	\begin{array}{cl}
		1,& t\in [t_o-\rho^2, \tau],\\[4pt]
		1-\frac{t-\tau}{\delta},& t\in (\tau,\tau+\delta),\\[4pt]
		0,& t\in [\tau+\delta ,t_o],
	\end{array}
	\right.
\end{equation*}
where $\delta>0$ and $t_o-\rho^2<\tau<\tau +\delta<t_o$.
With this specification of $\chi$
we consider the first integral on the left-hand side. We perform an integration by parts with respect to time and obtain (observe that no boundary terms occur due to the choice of $\chi$ and $\widetilde\chi$)
\begin{align*}
	&\iint_{\Omega_T\cap Q_\rho}
	\phi^2\chi\widetilde\chi
	\partial_t\big[\boldsymbol\Psi(|Du|)\big]\,\dx\dt\\
	&\qquad=
	-
	\iint_{\Omega_T\cap Q_\rho}
	\phi^2\partial_t\big[\chi(t)\widetilde \chi (t)\big]
	\boldsymbol\Psi(|Du|) \,\dx\dt\\
	&\qquad=
	-
	\iint_{\Omega_T\cap Q_\rho}
	\phi^2 \big[ \widetilde \chi\partial_t\chi+
	\chi\partial_t\widetilde \chi\big]
	\boldsymbol\Psi(|Du|)\,\dx\dt\\
	&\qquad=
	-\iint_{\Omega_T\cap Q_\rho}
	\widetilde\chi\phi^2\partial_t\chi
	\boldsymbol\Psi(|Du|) \,\dx\dt +
	\frac1{\delta}
	\iint_{\Omega_T\cap B_\rho\times (\tau,\tau+\delta)}
	\chi\phi^2 \boldsymbol\Psi(|Du|)\,\dx\dt.
\end{align*}
We insert this into \eqref{pre-W22-est} and pass to the limit
$\delta\downarrow 0$. For $\tau\in (t_o-\rho^2,t_o)$ we obtain
\begin{align*}
   	\nonumber
   	\int_{\Omega\cap B_\rho\times\{ \tau\}}&
    \chi\phi^2\boldsymbol\Psi(|Du|)\,\dx\\
    &\phantom{\le\,}+
	\iint_{\Omega_T\cap B_\rho\times(t_o-\rho^2,\tau)} \chi\phi^2
	\zeta^2(\bH(Du))\bH (Du)^{p+2\alpha-2}|D^2u|^2
	\,\dx\dt\nonumber\\
	&\le
	c\bigg[ \iint_{\Omega_T\cap Q_\rho}
	 \chi\bH (Du)^{p+2\alpha}|D\phi|^2\,\dx\dt+
	    (1+2\alpha)^2\mathbf{I}_2+\mathbf I_3\bigg],
\end{align*}
where 
$\mathbf I_3$ is defined by
\begin{equation*}
	\mathbf I_3
	:= 
	\frac{1}{2(1+\alpha)}
	\iint_{\Omega_T\cap Q_\rho}\phi^2
	\partial_t\chi
	 \bH (Du)^{2+2\alpha}\,\dx\dt.
\end{equation*}
In the estimate leading to  $\mathbf I_3$ we used \eqref{Psi-upper-bound} and the fact that
$\partial_t\chi\ge 0$.  Note that $\mathbf I_3$ is non-negative. Observe  also that the right-hand side of the preceding inequality
is independent of $\tau$.
Therefore we can pass  to the limit $\tau\uparrow t_o$ in the second integral on the left-hand side, while in the first integral we can take the supremum over $\tau\in (t_o-\rho^2,t_o)$. This implies
\begin{align*}
	\sup_{t_o-\rho^2<\tau<t_o}&
	\int_{\Omega\cap B_\rho\times\{ \tau\}}
    \chi\phi^2\boldsymbol\Psi(|Du|)\,\dx\nonumber\\
    &\phantom{\le\,} +
	\iint_{\Omega_T\cap Q_\rho} \chi\phi^2
	\zeta^2(\bH(Du))\bH (Du)^{p+2\alpha-2} |D^2u|^2
	\,\dx\dt\nonumber\\
	&\le
	c\bigg[ \iint_{\Omega_T\cap Q_\rho}\chi
	\bH (Du)^{p+2\alpha}|D\phi|^2\,\dx\dt+
	(1+2\alpha)^2\mathbf{I}_2+\mathbf I_3\bigg],
\end{align*}
with a constant $c=c(n,N,m,M,p)$.
In order to bound the sup-term from below, we 
use \eqref{Psi-lower-bound} and multiply the resulting inequality by $(p+2\alpha)$,  from which we deduce
\begin{align*}
       \sup_{t_o-\rho^2<\tau<t_o}&
       \int_{\Omega\cap B_\rho\times\{ \tau\}}
       \chi\phi^2 \bH (Du)^{2+2\alpha} \,\dx\\
       &\phantom{\le\,}+
       (p+2\alpha)\iint_{\Omega_T\cap Q_\rho} \chi\phi^2
       \zeta^2(\bH(Du))\bH (Du)^{p+2\alpha-2} |D^2u|^2
	\,\dx\dt\\
	&\le
	c\,(p{+}2\alpha)
	 \iint_{\Omega_T\cap Q_\rho}\!\! \Big[ \|D\phi\|_{L^\infty}^2\bH (Du)^{p+2\alpha}
	 +\|\partial_t\chi\|_{L^\infty}\bH (Du)^{2+2\alpha}\Big]\dx\dt\\
        &\phantom{\le\,} +
	c\,(p+2\alpha)^3\iint_{\Omega_T \cap Q_{\rho } }\chi\phi^2 \bH
        (Du)^{2\alpha+(2-p)_+}|b|^2\,\dx\dt +
    |\Omega\cap B_\rho|,
\end{align*}
for a constant $c$ depending on $n,N,m,M,$ and $p$.
 After taking means, the preceding estimate takes
the form 
\begin{align}\label{pre-W22-est-2}\nonumber
       \sup_{t_o-\rho^2<\tau<t_o}&
       \mint_{\Omega\cap B_\rho\times\{\tau\}}
       \chi\phi^2 \bH (Du)^{2+2\alpha} \,\dx\\\nonumber
       &\phantom{\le\,}+
	(p+2\alpha)\rho^2\biint_{\Omega_T\cap Q_\rho} \chi\phi^2
        \zeta^2(\bH(Du))\bH (Du)^{p+2\alpha-2} |D^2u|^2
	\,\dx\dt\\
	&\le
	c\,(p+2\alpha)\mathbf{R}_1
	+
	c\,(p+2\alpha)^3 \mathbf{R}_2 + 1
	=:
	\mathbf{R},
\end{align}
for a constant $c=c(n,N,m,M,p)\ge 1$ and with the abbreviations
\begin{align*}
 	\mathbf{R}_1
	&:=
	 \rho^2\biint_{\Omega_T\cap Q_\rho}\Big[
         \|D\phi\|_{L^\infty}^2\bH
         (Du)^{p+2\alpha}+\|\partial_t\chi\|_{L^\infty} \bH
         (Du)^{2+2\alpha}\Big]\dx\dt
\end{align*}
and
\begin{align*}
 	\mathbf{R}_2
	&:=
	\rho^2\biint_{\Omega_T \cap Q_{\rho }}
	\chi\phi^2\bH (Du)^{2\alpha+(2-p)_+}|b|^2 \,\dx\dt.
\end{align*}  
Observe that we kept the cut-off function $\chi\phi^2$ in the integrand
of the last integral. The reason for that will
become clear later.

The next step is to perform an interpolation argument of
Gagliardo-Nirenberg type. 
For the parameter $\delta:=\frac{2(1+\alpha)}{n}>0$, we compute
\begin{align}\label{est:D^2u}\nonumber
	\Big|D\Big[\phi^{1+\frac2n}\, &\zeta^2(\bH(Du))\bH(Du)^{\frac{p+ 2\alpha+2\delta }{2}}\Big]\Big|\\\nonumber
	&\le 
	\tfrac{p+2\alpha+2\delta}{2}
	\phi^{1+\frac2n}\zeta^2(\bH(Du))\bH (Du)^\frac{p+ 2\alpha-2+2\delta}{2}
	| D^2u |\\\nonumber
	&\phantom{\le\,}
        +2\phi^{1+\frac2n}\zeta(\bH(Du))\zeta'(\bH(Du))\bH(Du)^{\frac{p+ 2\alpha+2\delta }{2}}|D^2u|
        \\\nonumber
        &\phantom{\le\,}
        +\tfrac{n+2}{n}\phi^{\frac2n}|D\phi|\zeta^2(\bH(Du))\bH(Du)^{\frac{p+ 2\alpha+2\delta }{2}}\\\nonumber
      &\le 
	c\,(p+2\alpha)\phi^{1+\frac2n}
	\zeta(\bH(Du))\bH (Du)^\frac{p+ 2\alpha-2+2\delta}{2}
	| D^2u |\\
        &\phantom{\le\,}
        +c\,\|D\phi\|_{L^\infty}\phi^{\frac2n}\bH(Du)^{\frac{p+ 2\alpha+2\delta }{2}},
\end{align}
for a constant $c=c(n)$.
In the last step, we used the fact that $\bH(Du)\le 1$
on the support of $\zeta'(\bH(Du))$, as well as the bounds $\zeta\le1$
and $\zeta'\le3$.
 On a fixed time slice we apply Sobolev's embedding
 in Lemma~\ref{lem:sobolev} with $p=\frac{2n}{n+2}$ 
 and then inequality~\eqref{est:D^2u}, with the result
\begin{align*}
	&\mint_{\Omega\cap B_\rho}\phi^{2+\frac{4}{n}}\zeta^4(\bH(Du))\bH(Du)^{p+2\alpha+2\delta}\,\dx\\
	&\quad
	\le
	2C_{\mathrm{Sob}}^2\rho^2
        \bigg[
        \mint_{\Omega\cap B_\rho}\big|D\big[\phi^{1+\frac{2}{n}}\zeta^2(\bH(Du))
        \bH(Du)^{\frac{p+2\alpha+2\delta}{2}}\big]\big|^{\frac{2n}{n+2}}\,\dx
	\bigg]^\frac{n+2}{n}\\
        &\quad\phantom{\le\,}
        +2
        \bigg[
        \mint_{\Omega\cap B_\rho}\big|\phi^{1+\frac{2}{n}}\zeta^2(\bH(Du))\bH(Du)^{\frac{p+2\alpha+2\delta}{2}}\big|^{\frac{2n}{n+2}}\,\dx
	\bigg]^\frac{n+2}{n}
        \\
        &\quad\le
        c\,C_{\mathrm{Sob}}^2\rho^2(p+2\alpha)^2
        \bigg[
        \mint_{\Omega\cap B_\rho}\Big[\phi^{1+\frac2n}\zeta(\bH(Du))\bH (Du)^{\frac{p+ 2\alpha-2}{2}+\delta}
	| D^2u |\Big]^{\frac{2n}{n+2}}\,\dx
	\bigg]^\frac{n+2}{n}\\
        &\quad
        \phantom{\le\,}
        +c\,C_{\mathrm{Sob}}^2 \rho^2\|D\phi\|_{L^\infty}^2
        \bigg[
        \mint_{\Omega\cap B_\rho}\big|\phi^{\frac2n}\bH(Du)^{\frac{p+2\alpha}{2}+\delta}\big|^{\frac{2n}{n+2}}\,\dx
	\bigg]^\frac{n+2}{n}.
\end{align*}
In the last line, we used the estimate
$\|\phi\|_{L^\infty}\le \rho\|D\phi\|_{L^\infty}$, which holds true
since $\phi$ has compact support in $B_\rho$, and we assumed that
$C_{\mathrm{Sob}}\ge 1$.
According to Lemma~\ref{lem:sobolev}, we can choose the Sobolev constant
$C_{\mathrm{Sob}}$ only depending on $n$ and $\Theta$. 
 Next, we estimate both  integrals on the right-hand side by
 means of H\"older's inequality with exponents $\frac{n+2}{n}$ and
 $\frac{n+2}2$, which leads to 
\begin{align*}
  \mint_{\Omega\cap
    B_\rho}&\phi^{2+\frac{4}{n}}\zeta^4(\bH(Du))\bH(Du)^{p+2\alpha+2\delta}\,\dx
  \le
  c\,C_{\rm Sob}^2\rho^2\, 
  \mathbf{II}_1\cdot\mathbf{II}_2^\frac{2}{n},
\end{align*}
where
\begin{align*}
	\mathbf{II}_1
	&:=
	(p+2\alpha)^2
	\mint_{\Omega\cap B_\rho}\phi^2\zeta^2(\bH(Du))\bH (Du)^{p+2\alpha-2}|D^2u|^2\,\dx\\
	&\phantom{:=\,}
	+
	\|D\phi\|_{L^\infty}^2
        \mint_{\Omega\cap B_\rho}\bH (Du)^{p+2\alpha}\,\dx
\end{align*}
and
\begin{equation*}
	\mathbf{II}_2:=\mint_{\Omega\cap B_\rho}\phi^2\bH (Du)^{n\delta}\dx.
\end{equation*}
At this point, the reason for our choice of $\delta$ becomes clear.
In fact, we have chosen $\delta$ in such a way that
$n\delta=2+2\alpha$ coincides with the integrability exponent in the sup-term of the energy inequality \eqref{pre-W22-est-2}. We multiply the preceding inequality by
$\chi (t)^{1+\frac{2}{n}}$ and take the mean with respect to $t$ over the interval $(t_o-\rho^2,t_o)$. In this way we obtain
\begin{align}\label{hi-int-1}
	\biint_{\Omega_T\cap Q_\rho}&
	[\chi\phi^2]^{1+\frac{2}{n}}
    \zeta^4(\bH(Du))\bH(Du)^{p+2\alpha+2\delta} \,\dx\dt
	\nonumber\\
	&\le
	c\,C_{\rm Sob}^2\rho^2\,
	\mint_{(t_o-\rho^2,t_o)} \chi(t)\mathbf{II}_1\cdot \bigg[\mint_{\Omega\cap B_\rho}\chi\phi^2
	\bH (Du)^{2+2\alpha}\dx\bigg]^\frac{2}{n}\dt 
	\nonumber\\
	&\le 
	c\,C_{\rm Sob}^2\rho^2\,
	\mathbf R^\frac{2}{n} \mint_{(t_o-\rho^2,t_o)} \chi(t)\mathbf{II}_1\dt.
\end{align}
In the last line we used the energy inequality
\eqref{pre-W22-est-2}. Recall that $\mathbf R$ denotes the right-hand side 
of \eqref{pre-W22-est-2}. For the estimate of the last integral, we
again apply \eqref{pre-W22-est-2} and use the definition of $\mathbf R$, with the result
\begin{align*}
	 \mint_{(t_o-\rho^2,t_o)} \chi(t)\mathbf{II}_1\dt
	&\le 
	(p+2\alpha)^2
	\biint_{\Omega_T\cap Q_\rho}\chi\phi^2\zeta^2(\bH(Du))\bH (Du)^{p+2\alpha-2}|D^2u|^2\,\dx\\
	&\quad +
	\|D\phi\|_{L^\infty}^2\biint_{\Omega_T\cap Q_\rho}\bH (Du)^{p+2\alpha}
	\,\dx\dt\\
	&\le 
	\frac{c\,(p+2\alpha)}{\rho^2}\,\mathbf R,
\end{align*}
where the constant $c$ depends on $n,N,m,M,p$, and $\Theta$.
Joining this with \eqref{hi-int-1} yields
\begin{align}\label{hi-int-2}\nonumber
	\bigg(\biint_{\Omega_T\cap Q_\rho}&
	[\chi\phi^2]^{1+\frac{2}{n}}\bH(Du)^{p+2\alpha+2\delta}\dx\dt\bigg)^{\frac
          n{n+2}}\\\nonumber
        &\le
        1+\bigg(\biint_{\Omega_T\cap Q_\rho}
        [\chi\phi^2]^{1+\frac{2}{n}}
        \zeta^4(\bH(Du))\bH(Du)^{p+2\alpha+2\delta}\dx\dt\bigg)^{\frac
          n{n+2}}\\
	&\le c\,(p+2\alpha)\mathbf R.
\end{align}
Our next goal is to estimate $\mathbf{R}_2$. In
view of the integrability assumption \eqref{assumption:b}, i.e.~$b\in L^\sigma(\Omega_T,\R^N)$ with
$\sigma>n+2$, we can use H\"older's inequality to estimate 
\begin{align}\label{b-term}
	\mathbf{R}_2
  	&\le
	\Theta^\frac{2}{\sigma}[b]_{\sigma,\Omega_T\cap Q_\rho}^2
	\bigg(\biint_{\Omega_T \cap Q_\rho}\big[\chi\phi^2\bH
  	(Du)^{2\alpha+(2-p)_+}\big]^{\frac{\sigma}{\sigma-2}}
  	\,\dx\dt\bigg)^{\frac{\sigma-2}\sigma}
\end{align}
where we defined
\begin{equation*}
  [b]_{\sigma,\Omega_T\cap Q_\rho}:=
     \bigg[ \rho^{\sigma-n-2}\iint_{\Omega_T \cap Q_{\rho } }|b|^\sigma\dx\dt\bigg]^\frac{1}{\sigma}.
\end{equation*}
Here we used the fact that $\rho^{n+2}/|\Omega_T\cap Q_\rho|$ is bounded by
$\Theta$.
In order to estimate the integral on the right-hand side of \eqref{b-term} further, we
interpolate the $L^{\frac\sigma{\sigma-2}}$-norm between the
$L^1$-norm and the $L^{\frac{n+2}{n}}$-norm, which is possible since
$\sigma>n+2$. 
For every $\kappa>0$, this yields the bound
\begin{align}\label{interpolation}\nonumber
  	\mathbf{R}_2
  	&\le
  	\Theta^\frac{2}{\sigma}[b]_{\sigma,\Omega_T\cap Q_\rho}^2
  	\bigg[
  	\kappa\bigg(\biint_{\Omega_T \cap Q_\rho}
	\big[\chi\phi^2\bH(Du)^{2\alpha+(2-p)_+}\big]^{\frac{n+2}{n}}
	\,\dx\dt\bigg)^{\frac{n}{n+2}}\\\nonumber
  	&\qquad\qquad\qquad\qquad +
  	\kappa^{-\frac{n+2}{\sigma-n-2}}
  	\biint_{\Omega_T \cap Q_\rho}
	\chi\phi^2\bH(Du)^{2\alpha+(2-p)_+}\,\dx\dt 
  	\bigg]\\\nonumber
  	&\le
  	\Theta^\frac{2}{\sigma}[b]_{\sigma,\Omega_T\cap Q_\rho}^2
  	\bigg[
   	\kappa\bigg(\biint_{\Omega_T \cap Q_\rho}
	[\chi\phi^2]^{\frac{n+2}{n}}
  	\big[\bH(Du)^{p+2\alpha+2\delta}+1\big]\,\dx\dt\bigg)^{\frac{n}{n+2}}\\
  	&\qquad\qquad\qquad\qquad +
  	\kappa^{-\frac{n+2}{\sigma-n-2}}
  	\biint_{\Omega_T \cap Q_\rho}\big[\bH(Du)^{p+2\alpha}+1\big]\,\dx\dt
	\bigg].
\end{align}
In the last line, we used the fact that
\begin{align*}
  \big[2\alpha+(2-p)_+\big]\tfrac{n+2}n
  \le
  p+2\alpha+2\delta
  \qquad\mbox{and}\qquad
  2\alpha+(2-p)_+
  \le p+2\alpha.
\end{align*}
The latter  hold true for any $\alpha\ge0$ and $p\ge1$.
Joining estimates~\eqref{hi-int-2}
and~\eqref{interpolation}, we arrive at
\begin{align*}
 	\Big[1 -
	c\,(p&+2\alpha)^3\Theta^\frac{2}{\sigma}
	[b]_{\sigma,\Omega_T\cap Q_\rho}^2\kappa\Big]
	\bigg[\biint_{\Omega_T\cap Q_\rho}
	[\chi\phi^2]^{1+\frac{2}{n}}
	\bH(Du)^{p+2\alpha+2\delta}\,\dx\dt\bigg]^{\frac{n}{n+2}}\\\nonumber
	&\le 
    c\,(p+2\alpha) \mathbf{R}_1 +
	c\,\Theta^\frac{2}{\sigma}(p+2\alpha)^3
	[b]_{\sigma,\Omega\cap Q_\rho}^{2}\kappa + 1\\
    &\phantom{\le\,} +
    c\,\Theta^\frac{2}{\sigma}(p+2\alpha)^{3}
    [b]_{\sigma,\Omega_T\cap Q_\rho}^{2}\kappa^{-\frac{n+2}{\sigma-n-2}}
    \biint_{\Omega_T \cap Q_\rho}\big[\bH(Du)^{p+2\alpha}+1\big]\,\dx\dt.
\end{align*}
At this stage, we choose the parameter $\kappa>0$ so small that 
\begin{equation*}
	c\,(p+2\alpha)^3 \Theta^\frac{2}{\sigma}[b]_{\sigma,\Omega_T\cap
          Q_\rho}^{2}
          \kappa =\tfrac12.
\end{equation*}
This implies in particular that the second term on the right-hand side equals $\frac12 $. On the other hand the coefficient in front of the last term on the right equals
\begin{align*}
	\tfrac12\kappa^{-1-\frac{n+2}{\sigma-n-2}}
	&=
	\tfrac12 \kappa^{-\frac{\sigma}{\sigma-n-2}}
	=
	\Big[c\,(p+2\alpha)^3 \Theta^\frac{2}{\sigma}[b]_{\sigma,\Omega_T\cap Q_\rho}^{2}\Big]^\frac{\sigma}{\sigma-n-2}.
\end{align*}
This turns the preceding estimate into 
\begin{align*}
  	&\bigg[\biint_{\Omega_T\cap Q_\rho}
	[\chi\phi^2]^{1+\frac{2}{n}}\bH(Du)^{p+2\alpha+2\delta}\dx\dt\bigg]^{\frac{n}{n+2}}\\\nonumber
	&\qquad\le 
        	c\,(p+2\alpha)^{\frac{3\sigma}{\sigma-n-2}}
	\bigg[1+\mathbf{R}_1+ [b]_{\sigma,\Omega_T\cap Q_\rho}^{\frac{2\sigma}{\sigma-n-2}}
         \biint_{\Omega_T \cap Q_\rho}
         \big[\bH(Du)^{p+2\alpha}+1\big]\dx\dt\bigg]
\end{align*}
for a constant $c$ depending only on $n,N,m,M,p,\sigma$, and $\Theta$.
By an application of Young's inequality, we can rewrite the above inequality and obtain the {\it reverse
H\"older type estimate}
\begin{align}\label{reverse:start}\nonumber
  	\bigg[\biint_{\Omega_T\cap Q_\rho}
	&
	[\chi\phi^2]^{1+\frac{2}{n}}\bH(Du)^{p+2\alpha+2\delta}\,\dx\dt\bigg]^{\frac{n}{n+2}}\\
	&\le 
        c\,(p+2\alpha)^{\frac{3\sigma}{\sigma-n-2}}\boldsymbol\gamma_o\,
        \biint_{\Omega_T\cap Q_\rho}\big[\bH(Du)^{q+2\alpha}
        +1\big]\,\dx\dt,
\end{align}
where we defined  $q:=\max\{p,2\}$ and moreover abbreviated
\begin{equation*}
	\boldsymbol\gamma_o
	:=
	1+\rho^2\|D\phi\|_{L^\infty}^2
        +\rho^2\|\partial_t\chi\|_{L^\infty}+[b]_{\sigma,\Omega_T\cap
          Q_\rho}^{\frac{2\sigma}{\sigma-n-2}}.
\end{equation*}
For the constant $c$ above we have the dependencies $c(n,N,m,M,p,\sigma,\Theta)$.
For the Moser iteration scheme we need to compare the exponents on both sides of \eqref{reverse:start}.
We have
\begin{equation*}
  p+2\alpha+2\delta
  =
  q+2\alpha+\tfrac4n(1+\alpha)-(q-p)
  >
  q+2\alpha,
\end{equation*}
since $p>\frac{2n}{n+2}>2-\frac4n$.  

\subsection{The iteration scheme}
We fix radii $r,s$ with $\frac\rho2\le r<s\le \rho$ and define
\begin{equation*}
 \rho_k:=r+\tfrac1{2^k}(s-r)\quad\mbox{and}\quad Q_k:= Q_{\rho_k}(x_o,t_o)
\end{equation*}
for $k\in\N_0$.
We choose cut-off  functions $\phi_k\in C^\infty_0(B_{\rho_k}(x_o), [0,1])$ such that $\phi_k\equiv 1$ on
$B_{\rho_{k+1}}(x_o)$ and $|D\phi_k|\le\frac{2^{k+2}}{s-r}$ and
$\chi_k\in W^{1,\infty}((t_o-\rho_k^2,t_o),[0,1])$ such that $\chi_k(t_o-\rho_k^2)=0$, 
$\chi_k\equiv 1$ on $(t_o-\rho_{k+1}^2, t_o)$, and $0\le\partial_t\chi_k\le \frac{2^{2(k+2)}}{(s-r)^2}$.
With these specifications inequality \eqref{reverse:start} yields
\begin{align}\label{reverse:start-1}\nonumber
  	\bigg[\frac{1}{|\Omega_T\cap Q_{k-1}|}&
	\iint_{\Omega_T\cap Q_{k}}
	\bH(Du)^{q+2\alpha(1+\frac{2}{n}) -(q-p)+\frac{4}{n}}\,\dx\dt\bigg]^{\frac{n}{n+2}}\\
	&\le 
        \frac{C_o4^k\rho^2}{(s-r)^2}\,(p+2\alpha)^{\frac{3\sigma}{\sigma-n-2}}
        \biint_{\Omega_T\cap Q_{k-1}}\big[\bH(Du)^{q+2\alpha}
        +1\big]\,\dx\dt,
\end{align}
for a constant $C_o$ of the type
\begin{equation}\label{def:C_o}
	C_o:=C\Big( 1+[b]_{\sigma,\Omega_T\cap
          Q_\rho}^{\frac{2\sigma}{\sigma-n-2}}\Big).
\end{equation}
Here $C$ denotes a universal constant depending on $n,N,m,M,p,\sigma$, and $\Theta$.
To bound the left-hand side of \eqref{reverse:start-1} we use the fact
\begin{equation*}
        \frac{|\Omega_T \cap Q_{k}|}{|\Omega_T \cap Q_{k-1}|}
        \ge
        \frac{|\Omega_T\cap Q_{\rho/2}|}{|Q_\rho|}
        \ge
        \frac{|\Omega\cap B_{\rho/2}|}{4|B_1|\rho^n}
        \ge \frac{1}{c(n)\Theta}.
\end{equation*}
We use this in \eqref{reverse:start-1} and obtain
\begin{align}\label{reverse:start-2}\nonumber
  	\bigg[
	\biint_{\Omega_T\cap Q_{k}}&
	\bH(Du)^{q+2\alpha(1+\frac{2}{n}) -(q-p)+\frac{4}{n}}\dx\dt\bigg]^{\frac{n}{n+2}}\\
	&\le 
        \frac{C_o4^k\rho^2}{(s-r)^2}(p+2\alpha)^{\frac{3\sigma}{\sigma-n-2}}
        \biint_{\Omega_T\cap Q_{k-1}}\big[\bH(Du)^{q+2\alpha}
        +1\big]\dx\dt,
\end{align}
for some  constant $C_o$ with  the same structure as the one in~\eqref{def:C_o}. We now define
recursively a sequence $(\beta_k)_{k\in\N_o}$  by $\beta_o:=0$ and 
\begin{equation*}
	2\beta_k:=2\beta_{k-1}\big(1+\tfrac2n\big)-(q-p)+\tfrac4n.
\end{equation*}
Induction leads to
\begin{equation}\label{formula-beta-k}
  \beta_k= \frac{4-n(q-p)}{4}
  \Big[\Big( 1+\frac2n\Big)^k-1\Big].
\end{equation}
The choice $\alpha=\beta_{k-1}$ turns \eqref{reverse:start-2} into
\begin{align}\label{reverse:start-3}\nonumber
  	\bigg[
	\biint_{\Omega_T\cap Q_{k}}&
	\bH(Du)^{q+2\beta_k}\,\dx\dt\bigg]^{\frac{n}{n+2}}\\
	&\le 
        \frac{C_o4^k\rho^2}{(s-r)^2}(1+\beta_k)^{\frac{3\sigma}{\sigma-n-2}}\bigg[
        \biint_{\Omega_T\cap Q_{k-1}}\bH(Du)^{q+2\beta_{k-1}}
        \,\dx\dt+1\bigg].
\end{align}
In the last line we used $\beta_{k-1}<\beta_k$ to replace 
 $p+2\beta_{k-1}$ by $2p(1+\beta_{k})$.
 The constant $C_o$ in the above estimate is up to a multiplicative factor
the same as the one from~\eqref{def:C_o}. This, however,  does not change the dependencies
in $C_o$.  
To proceed further let
\begin{equation*}
	\boldsymbol A_k:=\biint_{\Omega_T\cap Q_{k}}\bH(Du)^{q+2\beta_k}\,\dx\dt.
\end{equation*}
In terms of $\boldsymbol A_k$ the reverse H\"older inequality \eqref{reverse:start-3}
leads to a recursion formula
\begin{equation*}
	\boldsymbol A_k
	\le
	\bigg[\frac{C_o4^k\rho^2}{(s-r)^2}(1+\beta_k)^{\frac{3\sigma}{\sigma-n-2}}\bigg]^{1+\frac2n}
	\big(\boldsymbol A_{k-1}+1\big)^{1+\frac2n}\qquad
	\forall\,k\in\N.
\end{equation*}
Iteration of this inequality gives
\begin{equation*}
	\boldsymbol A_k
	\le
	\prod_{j=1}^k
	\bigg[ \frac{C_o4^j\rho^2}{(s-r)^2}(1+\beta_j)^{\frac{3\sigma}{\sigma-n-2}}\bigg]^{(1+\frac2n)^{k-j+1}}
	\big(\boldsymbol A_{0}+1\big)^{(1+\frac2n)^k}
\end{equation*}
for any $k\in\N$. Here we enlarged $C_o$ by a factor 2.
We take this inequality to the power $\frac{1}{q+2\beta_k}$ and obtain
\begin{equation}\label{iteration:1}
	\boldsymbol A_k^\frac{1}{q+2\beta_k}
	\le
	\prod_{j=1}^k
	\bigg[ \frac{C_o4^j\rho^2}{(s-r)^2}(1+\beta_j)^{\frac{3\sigma}{\sigma-n-2}}
	\bigg]^\frac{(1+\frac2n)^{k-j+1}}{q+2\beta_k}
	\big(\boldsymbol A_{0}+1\big)^\frac{(1+\frac2n)^k}{q+2\beta_k}.
\end{equation}
Note that
\begin{equation*}
	\lim_{k\to\infty} \frac{(1+\frac2n)^k}{q+2\beta_k}= \frac{2}{4-n(q-p)}.
\end{equation*}
Therefore, we have
\begin{equation}\label{limit-rhs}
	\lim_{k\to\infty} 
	\big(\boldsymbol A_{0}+1\big)^\frac{(1+\frac2n)^k}{q+2\beta_k}
	=
	\bigg[
	\biint_{\Omega_T\cap Q_{s}}\bH(Du)^{q}\,\dx\dt+1
	\bigg]^\frac{2}{4-n(q-p)}.
\end{equation}
With the abbreviation 
\begin{equation*}
	\boldsymbol\gamma :=\frac{4-n(q-p)}{2}\in(0,2],
\end{equation*}
formula~\eqref{formula-beta-k} takes the form 
\begin{equation*}
  \beta_k=\frac{\boldsymbol\gamma}2\Big[\Big(1+\frac2n\Big)^k-1\Big]
         \le \Big(1+\frac2n\Big)^k-1.
\end{equation*}
Therefore, for any $j\in\N$ we have the estimate
  \begin{equation*}
     \frac{C_o4^j\rho^2}{(s-r)^2}(1+\beta_j)^{\frac{3\sigma}{\sigma-n-2}}
    \le
    \widetilde C_oK^j,
  \end{equation*}
with the abbreviations  
\begin{align*}
  \widetilde C_o&:=\frac{C_o\rho^2}{(s-r)^2}
  =\frac{C\rho^2}{(s-r)^2}\Big( 1+[b]_{\sigma,\Omega_T\cap
          Q_\rho}^{\frac{2\sigma}{\sigma-n-2}}\Big)
\end{align*}
and
\begin{align*}
    K&:=4\big(1+\tfrac2n\big)^{\frac{3\sigma}{\sigma-n-2}}\ge1.
\end{align*}
  We use this to bound the product appearing in \eqref{iteration:1},
  with the result
  \begin{align*}
   	 \prod_{j=1}^k
    	\bigg[
    	\frac{C_o4^{j}\rho^2}{(s-r)^2}(1+\beta_j)^{\frac{3\sigma}{\sigma-n-2}}
    	\bigg]^{\frac{(1+\frac2n)^{k-j+1}}{q+2\beta_k}}
 	& \le
  	\prod_{j=1}^k \widetilde C_o^{\frac{(1+\frac2n)^{k-j+1}}{q+2\beta_k}}
  	\prod_{j=1}^k K^{\frac{j(1+\frac2n)^{k-j+1}}{q+2\beta_k}} \\
 	& \le
  	\prod_{j=1}^k 
	\widetilde C_o^{\frac{(1+\frac2n)^{k-j+1}}{\boldsymbol\gamma[(1+\frac2n)^k-1]}}
  	\prod_{j=1}^k 
	K^{\frac{j(1+\frac2n)^{k-j+1}}{\boldsymbol\gamma[(1+\frac2n)^k-1]}}.
\end{align*}
The first product on the right-hand side can be computed with the help of \eqref{A1} from Lemma~\ref{lem:A} applied with $A=\widetilde C_o$ and $\theta=1+\frac2n$. We obtain
\begin{align*}
	\prod_{j=1}^k
	\widetilde C_o^{\frac{(1+\frac2n)^{k-j+1}}{\boldsymbol\gamma[(1+\frac2n)^k-1]}}
	=
	\widetilde C_o^\frac{n+2}{2\boldsymbol\gamma}
	=
	\widetilde C_o^{\frac{n+2}{4-n(q-p)} }.
\end{align*}	
Similarly, the second product can be bounded with the help of \eqref{A2} from Lemma~\ref{lem:A} applied with $A=K$ and $\theta=1+\frac2n$. This yields
\begin{align*}
  	\prod_{j=1}^k 
  	K^{\frac{j(1+\frac2n)^{k-j+1}}{\boldsymbol\gamma[(1+\frac2n)^k-1]}}
  	&\le
  	K^{\frac{(n+2)^2}{4\boldsymbol\gamma}}
  	=
  	K^{\frac{(n+2)^2}{2(4-n(q-p))}}.
\end{align*}
Inserting this above, we obtain
\begin{align*}
  	\prod_{j=1}^k\bigg[
	\frac{C_o4^{j}\rho^2}{(s-r)^2}
	(1+\beta_j)^{\frac{3\sigma}{\sigma-n-2}}
	\bigg]^{\frac{(1+\frac2n)^{k-j+1}}{q+2\beta_k}}
  	\le
  	\big(K^{\frac{n+2}2}\widetilde C_o\big)^{\frac{n+2}{4-n(q-p)} },
\end{align*}
where $K$ depends only on $n$ and $\sigma$. In particular, the
right-hand side is independent of $k\in\N$.
This allows us to pass to the limit $k\uparrow\infty$ in
\eqref{iteration:1}. In view of~\eqref{limit-rhs}, this yields
\begin{align*}
  \limsup_{k\to\infty}\mathbf A_k^{\frac{1}{ q+2\beta_k}}
  &\le
  \big(K^{\frac{n+2}2}\widetilde C_o\big)^{\frac{n+2}{4-n(q-p)} }\bigg[
  \biint_{\Omega_T\cap Q_{s}}\bH(Du)^{q}\,\dx\dt+1
  \bigg]^{\frac{2}{4-n(q-p)}}\\
  &\le
  C\bigg[\frac{\rho^{n+2}\big(1+ [ b]_{\sigma,\Omega_T\cap
      Q_{\rho}}^{\frac{(n+2)\sigma}{\sigma-n-2}}\big)}{(s-r)^{n+2}}
  \biint_{\Omega_T \cap Q_{s}}
	\big( 1+|Du|^2\big)^{\frac{ q }{2}}
	\,\dx\dt\bigg]^{\frac{2}{4-n(q-p)}}.
\end{align*}
In the last line we used  \eqref{def:C_o}, i.e.~the special form of $C_o$, and the fact that 
$\mu\le1$.
Since $\rho_k\downarrow r$ and $\beta_k\uparrow\infty$, the
last estimate implies the following {\it sup-estimate for the gradient}
\begin{align}\label{sup-Q-r-s}
	\sup_{\Omega_T\cap Q_{r}}|Du|
	&=
	\lim_{k\to\infty}
	\bigg[\biint_{\Omega \cap Q_{r}}
	|Du|^{q+2\beta_k}
	\,\dx\dt\bigg]^{\frac{1}{ q+2\beta_k}}\nonumber\\
        &\le
        \limsup_{k\to\infty}\mathbf A_k^\frac{1}{ q+2\beta_k}\nonumber\\
        &\le
        C\bigg[\frac{\rho^{n+2}\big(1+ [ b]_{\sigma,\Omega_T\cap
            Q_{\rho}}^{\frac{(n+2)\sigma}{\sigma-n-2}}\big)}{(s-r)^{n+2}}
        \biint_{\Omega_T \cap Q_{s}}
	\big( 1+|Du|^2\big)^{\frac{ q }{2}}
	\,\dx\dt\bigg]^{\frac1q\frac{2q}{4-n(q-p)}},
\end{align}
for a constant $C$ that depends on $n,m,M,p,\sigma$, and  $\Theta$.
In the case $p\ge 2$ we have $q=p$, and therefore \eqref{sup-Q-r-s} simplifies to
\begin{align*}
	\sup_{\Omega_T\cap Q_{r}}|Du|
        &\le
        C\bigg[\frac{\rho^{n+2}\Big(1+ [b]_{\sigma,\Omega_T\cap
          Q_{\rho}}^{\frac{(n+2)\sigma}{\sigma-n-2}}\Big)}{(s-r)^{n+2}} \biint_{\Omega_T \cap Q_{s}}
	\big( 1+|Du|^2\big)^{\frac{ p}{2}}
	\,\dx\dt\bigg]^{\frac{d}p},
\end{align*}
where $d=\tfrac12 p$ is  the scaling deficit  from \eqref{def:deficit}.
With the choice $r=\frac\rho2$ and $s=\frac{3\rho}4$, this yields the asserted
sup-estimate for the gradient~\eqref{Lipschitz-estimate} in the case $p\ge2$.
Note that this is in perfect accordance with the interior estimate
\cite[Chapter VIII, Theorem 5.1]{DB}.

\subsection{Interpolation in the case $\frac{2n}{n+2}<p<2$}
To reduce the integrability exponent
in the sup-estimate from $q=2$ to $p$ in the singular case we need an additional interpolation
argument. To this end, we 
apply \eqref{sup-Q-r-s} with arbitrary radii $r,s$ satisfying 
$\frac\rho2\le r<s\le \frac{3\rho}4$.
On the right-hand side of
the estimate, we bound a part of the integrand by its supremum and
then apply Young's inequality with exponents $\frac{4-n(2-p)}{2(2-p)}$ and
$\frac{4-n(2-p)}{p(n+2)-2n}$.
Note that this is possible if and only if $\frac{2n}{n+2}<p<2$.
This procedure leads us to 
\begin{align*}
	\sup_{\Omega_T\cap Q_{r}}& \big(1+|Du|^2\big)^{\frac12}\\
        &\le
        C\bigg[\frac{\rho^{n+2}\big(1+ [ b]_{\sigma,\Omega_T\cap
            Q_{\rho}}^{\frac{(n+2)\sigma}{\sigma-n-2}}\big)}{(s-r)^{n+2}}
          \biint_{\Omega_T \cap Q_{s}}
	\big( 1+|Du|^2\big)
	\,\dx\dt\bigg]^{\frac{2}{4-n(2-p)}}\\[1.2ex]
        &\le
        C\sup_{\Omega_T\cap Q_{s}}\big(1+|Du|^2\big)^{\frac{2-p}{4-n(2-p)}}\\
        &\quad
        \cdot
        \bigg[\frac{\rho^{n+2}\big(1+[b]_{\sigma,\Omega_T\cap
            Q_{\rho}}^{\frac{(n+2)\sigma}{\sigma-n-2}}\big)}{(s-r)^{n+2}}
          \biint_{\Omega_T \cap Q_{s}}
	\big( 1+|Du|^2\big)^{\frac p2}
	\,\dx\dt\bigg]^{\frac{2}{4-n(2-p)}}\\[1.2ex]
        &\le
        \tfrac12\sup_{\Omega_T\cap Q_{s}}\big(1+|Du|^2\big)^{\frac12}\\
        &\quad+
        C\bigg[\frac{\rho^{n+2}\big(1+ [ b]_{\sigma,\Omega_T\cap
            Q_{\rho}}^{\frac{(n+2)\sigma}{\sigma-n-2}}\big)}{(s-r)^{n+2}}
          \biint_{\Omega_T\cap Q_{s}}
	\big( 1+|Du|^2\big)^{\frac p2}
	\,\dx\dt\bigg]^{\frac{2}{p(n+2)-2n}}.
\end{align*}
By a standard iteration argument (cf. \cite[Chapter~V, Lemma~3.1]{Giaquinta-book}), this implies
\begin{align}\label{sup-est-rho}\nonumber
	\sup_{\Omega_T\cap Q_{\rho/2}}&(1+|Du|^2)^{\frac12}\\
        &\le
        C\bigg[\Big(1+ [ b]_{\sigma,\Omega_T\cap
            Q_{\rho}}^{\frac{(n+2)\sigma}{\sigma-n-2}}\Big)
          \biint_{\Omega_T\cap Q_{3\rho/4}}
	\big( 1+|Du|^2\big)^{\frac p2}
	\,\dx\dt\bigg]^\frac{d}{p},
\end{align}
where $d=\frac{2p}{p(n+2)-2n}$ is the scaling deficit, cf.~\eqref{def:deficit}.
This is exactly the claimed bound \eqref{Lipschitz-estimate} in the
singular range $\frac{2n}{n+2}<p<2$, and completes the proof of the sup-estimate from
Proposition~\ref{prop:apriori}. Note also that the sup-gradient estimate 
\eqref{sup-est-rho} is again in perfect accordance with the corresponding interior estimate 
\cite[Chapter VIII, Theorem 5.2']{DB}.

\section{Regularization}\label{sec:regularization}

In this section we describe the regularization procedure that will
allow us to extend the a priori estimate to the general
case. We consider the situation stated
in 
Theorem~\ref{thm:main}, i.e.~we let $\Omega\subset\R^n$ be
a   bounded convex domain, and suppose that $u\in
L^p(0,T;W^{1,p}(\Omega,\R^N))$ is a solution to \eqref{system}, where 
\eqref{assumption:a(0)} -- \eqref{assumption:b}
are in force. Moreover, we assume that for some $x_o\in\partial\Omega$
and $\rho>0$ we have $u\equiv0$ on $(\partial\Omega)_T\cap
Q_{2\rho}(z_o)$ in the sense of traces.

\subsection{Approximation of the domain}

For any $\eps\in(0,1]$ we consider the parallel set
$\widetilde\Omega_\eps:=\{x\in\R^n\colon \dist(x,\Omega)<\frac{3}2\eps\}$. Note that $\widetilde\Omega_\eps$ is convex as $\Omega$ is convex. By a well-known result from convex analysis (see e.g. \cite[\S XIII.2, Satz 2]{Marti:book}), the 
domains $\widetilde\Omega_\eps$ can be approximated in Hausdorff distance by 
smooth convex sets $\Omega_\eps$ with 
\begin{equation*}
  \dist_{\mathcal{H}}\big(\Omega_\eps,\widetilde\Omega_\eps\big)
  <
  \tfrac12\eps.
\end{equation*}
In particular, the regularized sets $\Omega_\eps$ satisfy
\begin{equation}\label{choice-Omega-delta}      
  \big\{x\in\R^n\colon \dist (x,\Omega)<\eps\big\}
  \subset\Omega_\eps\subset
  \big\{x\in\R^n\colon \dist
  (x,\Omega)<2\eps\big\}.
\end{equation}
Since the domains $\Omega_\eps$ approximate $\Omega$ from the outside, we obtain
\begin{equation}\label{measure-density-Omega-delta}
  \sup_{\eps\in(0,1]}\frac{\rho^n}{|\Omega_\eps\cap B_{\rho/2}(x_o)|}
  \le
  \frac{\rho^n}{|\Omega\cap B_{\rho/2}(x_o)|}
  =
  2^n\Theta_{\rho/2}(x_o)
\end{equation}
for every $x_o\in\partial\Omega$ and $\rho>0$,
with the constant $\Theta_{\rho/2}(x_o)$ introduced
in~\eqref{def-Theta}.
As a result, the constants in the a priori estimate will be
independent of $\eps\in(0,1]$.

\subsection{Regularization of the coefficients}\label{section:reg-coeff}
We regularize the coefficients by means of a
mollifyer $\phi\in C^\infty_0(\R,[0,\infty))$
with $\spt\phi\subset (-1,1)$ and $\int_\R\phi\,\d x=1$. For $\eps\in(0,1]$ we let
$\phi_\eps(x):=\eps^{-1}\phi(\frac x\eps)$ and
$$
	\mathbf c_\eps(s):=\big(\phi_\eps\ast \mathbf c\big)(s),
	\quad\mbox{where}\ \ 
	\mathbf c(s):= 
	\left\{
	\begin{array}{cl}
		\mathbf{a}(\mathrm{e}^{s}), 
		&\mbox{if $\mu>0$,} \\[5pt]
		\mathbf{a}\big(\sqrt{\eps^2 +\mathrm{e}^{2s}}\,\big)
		, &\mbox{if $\mu=0$,}
	\end{array}
	\right.
$$
for any $s\in\R$. 
The regularized coefficients $\mathbf a _{\eps}$ are defined by
$$
	\mathbf a_{\eps} (r):=  \mathbf c_\eps(\log r),
        \quad\mbox{for }r>0.
$$
Similarly as in \cite[Section~4.2]{BDMS} we obtain the following ellipticity
and growth conditions for $\mathbf a_{\eps}$; see also Appendix~\ref{app:reg} for the proof. For  any $r>0$ we have
\begin{equation}\label{growth-a-delta}
  \left\{
  \begin{array}{c}
    \tfrac{m}{c}(\lambda^2+r^2)^{\frac{p-2}2}
    \le
    \mathbf a_{\eps}(r)
    \le
    cM(\lambda^2+r^2)^{\frac{p-2}2},\\[5pt]
    \tfrac{m}{c}(\lambda^2+r^2)^{\frac{p-2}2}
    \le
    \mathbf a_{\eps}'(r)r+\mathbf a_{\eps}(r)
    \le
    cM(\lambda^2+r^2)^{\frac{p-2}2},\\[5pt]
    |\mathbf a_{\eps}''(r)r^2|
    \le
    \frac{cM}\eps(\lambda^2+r^2)^{\frac{p-2}2},
  \end{array}
  \right.
\end{equation}
with a constant $c=c(n,p)$ and 
\begin{equation}
	\lambda
	:=
	\left\{
	\begin{array}{cl}
		\mu ,&\mbox{if $\mu>0$,} \\[2pt]
		\epsilon, &\mbox{if $\mu=0$.}
	\end{array}
	\right.\label{lambda-def}
      \end{equation}
Moreover, we have 
\begin{align}\label{a-delta-convergence}
	| \mathbf a_{\eps} (r)-\mathbf{a}(r)|
	\le 
	2c(p)M \eps\max\big\{ 1,\mathrm{e}^{p-2}\big\}
        \big(\lambda^2 + r^2\big)^{\frac{p-2}{2}}
\end{align}
for any $r>0$; see also Appendix~\ref{app:reg} for the proof.
%
%
%
%
\subsection{Weak solutions to the regularized problems}

Here we assume that
\begin{equation*}
  \mbox{$u\equiv0\;$  on $(\partial\Omega)_T\cap Q_{2\rho}(x_o)$, }
\end{equation*}
where $Q_{2\varrho}(z_o)$ is a parabolic cylinder with 
$x_o\in \partial\Omega$ and $(t_o-4\rho^2,t_o)\subset(0,T)$. 
For a cut-off function
  $\widetilde\eta\in C^\infty(0,\infty;[0,1])$ with $\widetilde\eta\equiv 1$ on
  $[2,\infty)$ and $\widetilde\eta\equiv0$ on $(0,1)$ we
  consider the boundary values
  \begin{equation}\label{def:g_eps}
    g_\eps(x,t):=\eta_\eps(x) u(x,t)
    \quad\mbox{with}\quad
    \eta_\eps(x):=\widetilde\eta\Big(\frac{\dist(x,\partial\Omega)}\eps\Big)
    \mbox{ for $x\in\Omega$}.
  \end{equation}
  We extend this function to $\R^n\times(0,T)$ by letting
  $g_\eps\equiv 0$ on $(\R^n\setminus\Omega)_T$. 
  Note that the extension satisfies $g_\eps\in
  L^p(0,T;W^{1,p}_0(\Omega_\epsilon,\R^N))$.
For the inhomogeneity we consider the regularization 
$b_\eps:=\phi_{\epsilon/2}\ast b$, for a standard mollifier in space-time $\phi\in
C^\infty_0(Q_1,\R)$. 
Due to the construction of $\Omega_\eps$ (see \eqref{choice-Omega-delta}) we have  
\begin{equation}\label{b-compact-support}
  \spt b_\eps \Subset \Omega_\eps\times\R.
\end{equation}
We let $\mathbf{a}_\eps$ be the coefficients constructed in
Section~\ref{section:reg-coeff}.
By 
$$
	u_\epsilon
	\in 
	g_\eps+ L^p\big(t_o-\rho^2,t_o;W^{1,p}_0(\Omega_\epsilon\cap B_\rho(x_o),\R^N)\big)
$$ 
we denote the weak solution to the Cauchy-Dirichlet problem 
\begin{equation}\label{eq-ueps}
	\left\{
	\begin{array}{cl}
		\partial_t u_\epsilon - 
		\Div \big(\mathbf{a}_\eps(|Du_\epsilon|)Du_\epsilon\big)
		=
		b_\eps 
		& \quad\mbox{in $(\Omega_\epsilon)_T\cap Q_\rho(z_o)$} \\[5pt]
                u_\epsilon =
                   g_\eps
		& \quad\mbox{on $\partial_p((\Omega_\epsilon)_T\cap Q_\rho(z_o))$.}
	\end{array}
	\right.
\end{equation}
Note that $u_\eps=0$ on $(\partial\Omega_\eps)_T\cap Q_\rho(z_o)$ and
$u_\eps = \eta_\eps u$ on $(\Omega_\eps)_T\cap \partial_p Q_\rho (z_o)$. 
Using a reflection argument, interior regularity theory and
up-to-the-boundary Schauder estimates we can show that $u_\eps$ is
smooth up to the boundary component
$(\partial\Omega_\eps)_T\cap Q_\rho(z_o)$; see Appendix~\ref{app:smooth}.

\section{Proof of Theorem \ref{thm:main}}

The proof of the gradient estimate will be achieved in Section~\ref{sec:proof}. Prior to that, we shall prove an energy estimate for $u_\epsilon$. 

\subsection{An energy estimate for the approximating solutions}
Throughout this section we  omit the reference to the center $z_o$ in our notation. 
From \cite[Corollary~3.11]{Kinnunen-Martio} we recall the following result;
note that the constant $\gamma$ in \cite[Corollary~3.11]{Kinnunen-Martio} can be chosen as $\gamma=\frac12$ due to the convexity of $\Omega$; see \cite[Example~3.6\,(4)]{Kinnunen-Martio}.

\begin{lemma}[Hardy's inequality]\label{lem:Hardy}
Let $1<p<\infty$ and suppose that $\Omega\subset\R^n$ is a bounded open convex set. Then there is  a constant $c$ depending on $n$ and $p$ such that whenever $u\in W^{1,p}_0(\Omega)$ there holds
\begin{equation*}
	\int_\Omega \bigg(\frac{|u(x)|}{\dist(x,\partial\Omega)}\bigg)^p\,\dx
	\le
	c \int_\Omega|Du(x)|^p\,\dx.
\end{equation*}
\end{lemma}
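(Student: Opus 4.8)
The plan is to deduce the inequality from the one geometric feature of convex domains that is relevant here: the distance function $\delta(x):=\dist(x,\partial\Omega)$ is \emph{concave} on $\Omega$. Indeed, since $\overline\Omega$ is the intersection of its supporting half-spaces $\{y\colon y\cdot\nu\le h(\nu)\}$ ($\nu\in S^{n-1}$, $h$ the support function of $\Omega$), one checks that $\delta(x)=\inf_{\nu\in S^{n-1}}\bigl(h(\nu)-x\cdot\nu\bigr)$ for $x\in\Omega$; being an infimum of affine functions, $\delta$ is concave, $1$-Lipschitz, satisfies the eikonal equation $|\nabla\delta|=1$ a.e.\ in $\Omega$, and has $\Delta\delta\le0$ in the distributional sense. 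By density it suffices to prove the inequality for $u\in C^\infty_0(\Omega)$; the general case $u\in W^{1,p}_0(\Omega)$ then follows by taking $u_j\in C^\infty_0(\Omega)$ with $u_j\to u$ in $W^{1,p}$ and, along a subsequence, a.e., and invoking Fatou's lemma on the left-hand side.

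For $u\in C^\infty_0(\Omega)$ the key step is to test the differential inequality satisfied by $\delta$ against $|u|^p$. Consider the vector field $V:=\delta^{1-p}\nabla\delta$, which is bounded on a neighbourhood of $\spt u$ (there $\delta$ is bounded away from $0$). Formally $-\Div V=(p-1)\delta^{-p}|\nabla\delta|^2-\delta^{1-p}\Delta\delta\ge(p-1)\delta^{-p}$, using $|\nabla\delta|=1$ a.e.\ and $\delta^{1-p}\Delta\delta\le0$. Multiplying by $|u|^p\ge0$ and integrating by parts (there is no boundary term since $u$ has compact support) gives
\begin{align*}
  (p-1)\int_\Omega\frac{|u|^p}{\delta^p}\,\dx
  &\le-\int_\Omega|u|^p\,\Div V\,\dx
  =\int_\Omega V\cdot\nabla\bigl(|u|^p\bigr)\,\dx
  =p\int_\Omega\delta^{1-p}|u|^{p-2}u\,\nabla\delta\cdot\nabla u\,\dx\\
  &\le p\int_\Omega\Bigl(\frac{|u|}{\delta}\Bigr)^{p-1}|\nabla u|\,\dx
  \le p\Bigl(\int_\Omega\frac{|u|^p}{\delta^p}\,\dx\Bigr)^{\frac{p-1}{p}}\Bigl(\int_\Omega|\nabla u|^p\,\dx\Bigr)^{\frac1p},
\end{align*}
where the penultimate inequality uses $|\nabla\delta|\le1$ and the last one is Hölder's inequality with exponents $\tfrac p{p-1}$ and $p$. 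Since $\int_\Omega|u|^p\delta^{-p}\,\dx<\infty$ for $u\in C^\infty_0(\Omega)$, dividing by its $\tfrac{p-1}p$-th power yields the claim with $c=\bigl(\tfrac p{p-1}\bigr)^p$, which in fact depends only on $p$.

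The single genuine obstacle is to make the integration by parts rigorous, since $\delta$ is only Lipschitz and $\Delta\delta$ is merely a nonpositive Radon measure, so the pointwise formula for $\Div V$ must be read as a distributional inequality. One way: being concave, $\delta$ is semiconcave, so $\nabla\delta\in BV_{\mathrm{loc}}$ with $D^2\delta\le0$, and the chain rule $\Delta(\delta^{2-p})=(2-p)\delta^{1-p}\Delta\delta+(2-p)(1-p)\delta^{-p}$ (valid using $|\nabla\delta|^2=1$ a.e., and with $\delta^{2-p}$ replaced by $\log\delta$ when $p=2$) holds at the level of measures; rearranging gives exactly $-\Div V\ge(p-1)\delta^{-p}$. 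Alternatively one would avoid second derivatives altogether by running the argument on the superlevel sets $\{\delta>t\}$, which are again convex, combined with the coarea formula. In either form this is precisely the content of \cite[Corollary~3.11]{Kinnunen-Martio}, convexity entering through the admissible choice $\gamma=\tfrac12$ there; cf.\ \cite[Example~3.6\,(4)]{Kinnunen-Martio}.
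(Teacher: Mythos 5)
Your argument is correct, but it takes a genuinely different route from the paper: the paper does not prove the lemma at all, it simply quotes \cite[Corollary~3.11]{Kinnunen-Martio}, where a Hardy inequality is proved for a rather general class of domains, convexity of $\Omega$ entering only to verify that the parameter $\gamma$ there may be taken equal to $\tfrac12$ (cf.\ \cite[Example~3.6\,(4)]{Kinnunen-Martio}). You instead give a self-contained proof that exploits the specific geometry directly: $\delta=\dist(\cdot,\partial\Omega)$ is concave on a convex domain (infimum of the affine functions $h(\nu)-x\cdot\nu$), $|\nabla\delta|=1$ a.e., $\Delta\delta\le0$ as a measure, and the vector field $V=\delta^{1-p}\nabla\delta$ combined with H\"older gives the inequality with the explicit constant $\bigl(\tfrac{p}{p-1}\bigr)^p$, depending only on $p$ — which is even stronger than the stated dependence on $n$ and $p$. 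The one delicate point, giving meaning to $\Div V$ when $\delta$ is merely Lipschitz, you identify correctly, and the BV product/chain rule you invoke does close it, since $f'(\delta)$ is continuous and hence the singular (nonpositive) part of $\Delta\delta$ creates no ambiguity and only helps the sign. An even more elementary way to finish is to mollify: $\delta_\eps:=\delta*\rho_\eps$ is smooth and concave near $\spt u$ with $|\nabla\delta_\eps|\le1$ and $\nabla\delta_\eps\to\nabla\delta$ a.e., so one can run the computation for $\delta_\eps$ and pass to the limit by dominated convergence on $\spt u$. In short: the paper's citation buys generality (domains far rougher than convex ones), while your proof buys transparency and an explicit constant; both are valid proofs of the lemma as stated.
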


In the following we let
\begin{equation}\label{def:V}
	\boldsymbol V_\epsilon
	:=
	L^\infty\big(t_o-\rho^2,t_o;L^2(\Omega_\eps\cap B_\rho,\R^N)\big)\cap 
	L^p\big(t_o-\rho^2,t_o;W_0^{1,p}(\Omega_\eps\cap B_\rho,\R^N)\big)
\end{equation}
with norm 
$$
	\|\varphi\|_{\boldsymbol V_\eps}
	:=
	\|\varphi\|_{L^\infty-L^2} +
	\|\varphi\|_{L^p-W^{1,p}}. 
$$
We start  with an estimate for the spatial gradient of the boundary values.

\begin{lemma}\label{lem:Dge}
Let $u$ be a weak solution to \eqref{system} in
$\Omega_T$ with $u\equiv0$ on $(\partial\Omega)_T\cap Q_{2\rho}(z_o)$
in the sense of traces, and $g_\eps=\eta_\eps u$ be constructed as
in~\eqref{def:g_eps}.
Then we have
\begin{align*}
  \iint_{\Omega_T\cap Q_\rho} |Dg_\eps|^p\dx\dt
  \le
    c \iint_{\Omega_T\cap Q_{2\rho}} \big[ |Du|^p+\rho^{-p}|u|^p\big]\dx\dt,
\end{align*}
with
a constant $c=c(n,p)$.
\end{lemma}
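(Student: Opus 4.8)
The plan is to estimate $Dg_\eps$ pointwise using the product rule $Dg_\eps = \eta_\eps Du + u\otimes D\eta_\eps$, and then to control the two resulting contributions separately. First I would record that $g_\eps = \eta_\eps u$ with $\eta_\eps(x) = \widetilde\eta(\dist(x,\partial\Omega)/\eps)$, so that $|D\eta_\eps| \le \tfrac{C}{\eps}\mathbf 1_{\{\eps < \dist(x,\partial\Omega) < 2\eps\}}$, using $\|\widetilde\eta'\|_\infty < \infty$ and that $\dist(\cdot,\partial\Omega)$ is $1$-Lipschitz. Since $0\le\eta_\eps\le1$, the first term gives immediately $|\eta_\eps Du|^p \le |Du|^p$, which integrates to the desired $\iint_{\Omega_T\cap Q_\rho}|Du|^p$. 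So the whole issue is the term $|u\otimes D\eta_\eps|^p \le \tfrac{C^p}{\eps^p}|u|^p\,\mathbf 1_{\{\eps<\dist(x,\partial\Omega)<2\eps\}}$.

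The key point is that on the support of $D\eta_\eps$ one has $\dist(x,\partial\Omega) > \eps$, hence $\tfrac1\eps < \tfrac{1}{\dist(x,\partial\Omega)}$, so
\[
  \frac{1}{\eps^p}|u|^p\,\mathbf 1_{\{\dist(x,\partial\Omega)>\eps\}}
  \le
  \bigg(\frac{|u|}{\dist(x,\partial\Omega)}\bigg)^p .
\]
Now fix a time slice $t$ and apply Hardy's inequality (Lemma~\ref{lem:Hardy}) on the convex set $\Omega$: since $u(\cdot,t)\in W^{1,p}_0(\Omega,\R^N)$ — which is where the hypothesis $u\equiv0$ on $(\partial\Omega)_T\cap Q_{2\rho}$ together with the trace assumption enters, at least after localising — we get
\[
  \int_{\Omega} \bigg(\frac{|u(x,t)|}{\dist(x,\partial\Omega)}\bigg)^p\,\dx
  \le
  c(n,p)\int_{\Omega}|Du(x,t)|^p\,\dx .
\]
Integrating in $t$ and combining with the first term yields a bound by $\iint |Du|^p$. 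There is one technical wrinkle: the lemma as stated carries the extra term $\rho^{-p}|u|^p$ and integrates over the larger cylinder $Q_{2\rho}$, which signals that one should \emph{not} apply Hardy globally on $\Omega$ but rather localise with a spatial cut-off $\zeta\in C^\infty_0(B_{2\rho}(x_o),[0,1])$, $\zeta\equiv1$ on $B_\rho(x_o)$, $|D\zeta|\le c/\rho$. One applies Hardy to $\zeta u(\cdot,t)\in W^{1,p}_0(\Omega,\R^N)$ (it vanishes on $\partial\Omega$ in the trace sense near $x_o$ by hypothesis, and is compactly supported away from $\partial\Omega$ elsewhere, so it genuinely lies in $W^{1,p}_0$), giving $\int_\Omega(|\zeta u|/\dist)^p \le c\int_\Omega|D(\zeta u)|^p \le c\int_{\Omega\cap B_{2\rho}}(|Du|^p + \rho^{-p}|u|^p)$; restricting the left side to $B_\rho(x_o)$ where $\zeta=1$ and integrating in time produces exactly the claimed inequality.

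The main obstacle, and the only nontrivial point, is the justification that the relevant function lies in $W^{1,p}_0(\Omega,\R^N)$ on (almost) every time slice so that Hardy's inequality is applicable: one must argue from $u\in L^p(0,T;W^{1,p}(\Omega,\R^N))$ with zero trace on the portion $(\partial\Omega)_T\cap Q_{2\rho}$ that $\zeta(\cdot)u(\cdot,t)\in W^{1,p}_0(\Omega,\R^N)$ for a.e.\ $t$, which follows because $\zeta u$ has support in $\overline{B_{2\rho}(x_o)}$, vanishes in the trace sense on $\partial\Omega\cap B_{2\rho}(x_o)$, and vanishes identically near the rest of $\partial\Omega$. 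Everything else — the product rule, the pointwise bound on $D\eta_\eps$, and the support inclusion $\{\eps<\dist(\cdot,\partial\Omega)<2\eps\}$ — is elementary, and the constant depends only on $n$ and $p$ (through the Hardy constant and $\|\widetilde\eta'\|_\infty$), as claimed.
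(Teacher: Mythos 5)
Your proposal follows essentially the same route as the paper's proof: product rule for $D(\eta_\eps u)$, trivial absorption of the $\eta_\eps Du$ term, reduction of the $u\otimes D\eta_\eps$ term to the Hardy quotient $|u|/\dist(x,\partial\Omega)$, and an application of Hardy's inequality (Lemma~\ref{lem:Hardy}) on time slices to the localized function $\zeta u$ with $\zeta\in C^\infty_0(B_{2\rho})$, which is exactly what produces the extra term $\rho^{-p}|u|^p$ and the enlarged cylinder $Q_{2\rho}$; your remark on why $\zeta u(\cdot,t)\in W^{1,p}_0(\Omega,\R^N)$ for a.e.\ $t$ is also the right justification for invoking Hardy. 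One slip in the write-up: you justify the pointwise comparison with the wrong half of the support condition. From $\dist(x,\partial\Omega)>\eps$ one gets $\tfrac{1}{\dist(x,\partial\Omega)}<\tfrac1\eps$, so the displayed inequality $\tfrac{1}{\eps^p}|u|^p\,\mathbf{1}_{\{\dist(x,\partial\Omega)>\eps\}}\le\big(|u|/\dist(x,\partial\Omega)\big)^p$ is false as stated (take $\dist(x,\partial\Omega)=10\eps$). What you need, and what does hold, is the upper bound on the support: $\spt D\eta_\eps\subset\{\eps\le\dist(x,\partial\Omega)\le2\eps\}$, hence $\tfrac1\eps\le\tfrac{2}{\dist(x,\partial\Omega)}$ there, and the comparison holds with a factor $2^p$, harmlessly absorbed into $c$. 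With this correction your argument is complete and coincides with the paper's proof.
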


\begin{proof}
We choose a
standard cut-off function $\zeta\in C^\infty_0(B_{2\rho},[0,1])$ with
$\zeta\equiv1$ on $B_\rho$ and $|D\zeta|\le\frac2\rho$ on
$B_{2\rho}$. Then we apply
Hardy's inequality from Lemma~\ref{lem:Hardy} to the
function $\zeta u$ on the time-slices $\Omega\times\{t\}$
for a.e.~$t\in(t_o-\rho^2,t_o)$, with the result 
\begin{align*}
  \iint_{\Omega_T\cap Q_\rho} |Dg_\eps|^p\dx\dt
  &\le
  c \iint_{\Omega_T\cap Q_\rho} |Du|^p\dx\dt
  +
  \frac{c}{\eps^p}\iint_{\Omega_T\cap Q_\rho\cap\spt(D\eta_\eps)}
    |\zeta u|^p\dx\dt\\
  &\le
  c \iint_{\Omega_T\cap Q_\rho} |Du|^p\dx\dt
  +
  c \iint_{\Omega\times(t_o-\rho^2,t_o)}
    \bigg(\frac{|\zeta u|}{\dist(x,\partial\Omega)}\bigg)^p\dx\dt\\
  &\le
    c \iint_{\Omega_T\cap Q_\rho} |Du|^p\dx\dt
    +
    c \iint_{\Omega\times(t_o-\rho^2,t_o)} |D(\zeta u)|^p\dx\dt\\
  &\le
    c \iint_{\Omega_T\cap Q_{2\rho}} \big[ |Du|^p+\rho^{-p}|u|^p\big]\dx\dt.
\end{align*}
This proves the claimed estimate.
\end{proof}

In the next lemma we provide an estimate for the distributional time derivative of the boundary values.

\begin{lemma}\label{lem:time-u}
Let $u$ be a weak solution to \eqref{system} in
$\Omega_T$, and $g_\eps=\eta_\eps u$ be constructed as in~\eqref{def:g_eps}.
Then,  for any $\varphi\in C_0^\infty((\Omega_\eps)_T\cap Q_\rho,\R^N)$ we have
\begin{align}\label{time-deriv-g}
  \bigg|\iint_{\Omega_T} g_\eps\cdot\partial_t\varphi\,\dx\dt\bigg|
  &\le
    c \bigg[\iint_{\Omega_T\cap\spt\varphi}\big(\lambda^p+|Du|^p\big)\dx\dt\bigg]^{\frac{p-1}p}
    \|D\varphi\|_{L^p((\Omega_\eps)_T\cap Q_\rho)}\\[1ex]\nonumber
    &\phantom{\le\,}+
    \|b\|_{L^{\frac{p(n+2)}{p(n+2)-n}}(\Omega_T\cap\spt\varphi)}
    \|\varphi\|_{L^{\frac{p(n+2)}{n}}((\Omega_\eps)_T\cap Q_\rho)},
\end{align}
with
a constant $c=c(n,p,M)$ and the parameter $\lambda$ from \eqref{lambda-def}.
In particular,  $\partial_t g_\eps\in \boldsymbol V_\epsilon'$. 
\end{lemma}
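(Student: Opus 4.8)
The plan is to transfer the spatial cut-off $\eta_\eps$ from $g_\eps$ onto the test function and then invoke the weak formulation of the system satisfied by $u$. Since $\eta_\eps$ does not depend on $t$, we have $g_\eps\cdot\partial_t\varphi=u\cdot\partial_t(\eta_\eps\varphi)$, and $\psi:=\eta_\eps\varphi$ is Lipschitz in $x$ (with $|D\eta_\eps|\le c/\eps$), smooth in $t$, and of compact support in $\Omega_T$: indeed $\eta_\eps$ vanishes on $\{x\in\Omega:\dist(x,\partial\Omega)<\eps\}$, while $\varphi$ is compactly supported in $(\Omega_\eps)_T\cap Q_\rho$. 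After a routine spatial mollification, $\psi$ is admissible in \eqref{weak-solution}, which gives
\[
  \iint_{\Omega_T} g_\eps\cdot\partial_t\varphi\,\dx\dt
  =
  \iint_{\Omega_T}\mathbf a(|Du|)Du\cdot D(\eta_\eps\varphi)\,\dx\dt
  +
  \iint_{\Omega_T} b\cdot\eta_\eps\varphi\,\dx\dt.
\]
The Leibniz rule $(\eta_\eps\varphi^i)_{x_\alpha}=\eta_\eps\varphi^i_{x_\alpha}+\varphi^i\,\partial_{x_\alpha}\eta_\eps$ then splits the first integral into a \emph{good} part carrying $\eta_\eps D\varphi$ and a \emph{bad} part carrying $D\eta_\eps$.

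For the good part I would use the growth bound $|\mathbf a(|Du|)Du|\le cM(\mu^2+|Du|^2)^{\frac{p-1}2}\le c(\lambda^{p-1}+|Du|^{p-1})$ from \eqref{bounds-a} (recall $\lambda\ge\mu$), together with $0\le\eta_\eps\le1$ and H\"older's inequality with exponents $\tfrac{p}{p-1},p$; this produces exactly the first term on the right-hand side of \eqref{time-deriv-g}, with constant $c=c(n,p,M)$. For the inhomogeneity term, H\"older's inequality with the conjugate exponents $\tfrac{p(n+2)}{p(n+2)-n}$ and $\tfrac{p(n+2)}{n}$ (and $0\le\eta_\eps\le1$) yields the second term; this pairing is admissible, and the norm of $b$ finite, because $\tfrac{p(n+2)}{p(n+2)-n}\le2<\sigma$, so $b\in L^\sigma$ lies locally in the required Lebesgue space.

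The step I expect to be the main obstacle is the bad term
\[
  \sum_{i=1}^N\sum_{\alpha=1}^n\iint_{\Omega_T}\mathbf a(|Du|)u^i_{x_\alpha}\,\varphi^i\,\partial_{x_\alpha}\eta_\eps\,\dx\dt,
\]
because $|D\eta_\eps|$ is of order $1/\eps$ while $\varphi$ need not vanish near $\partial\Omega$. The idea is that $\partial_{x_\alpha}\eta_\eps$ is supported in the thin shell $S_\eps:=\{x\in\Omega:\eps\le\dist(x,\partial\Omega)\le2\eps\}$, on which two facts hold simultaneously: first $\tfrac1\eps\le\tfrac{2}{\dist(x,\partial\Omega)}$, and second, the crude estimate $\dist\big(x,\partial(\Omega_\eps\cap B_\rho)\big)\le4\eps$, which follows from the inclusions \eqref{choice-Omega-delta} together with a supporting-hyperplane argument for the convex set $\Omega$ (moving a distance $2\eps$ in an outward normal direction from the nearest point of $\partial\Omega$ already leaves $\{x:\dist(x,\Omega)<2\eps\}\supset\Omega_\eps$). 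Hence $\tfrac1\eps\le\tfrac{4}{\dist(x,\partial(\Omega_\eps\cap B_\rho))}$ on $S_\eps$. Bounding $|D\eta_\eps|\le c/\eps$ by this quantity, applying H\"older's inequality as before, and then applying Hardy's inequality (Lemma~\ref{lem:Hardy}) to $\varphi(\cdot,t)\in W^{1,p}_0(\Omega_\eps\cap B_\rho)$ slicewise in $t$ (note $\Omega_\eps\cap B_\rho$ is bounded open convex), this term is likewise dominated by $c\,\|\mathbf a(|Du|)Du\|_{L^{p/(p-1)}(\Omega_T\cap\spt\varphi)}\,\|D\varphi\|_{L^p((\Omega_\eps)_T\cap Q_\rho)}$, so it fits into the first term of \eqref{time-deriv-g}. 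Collecting the three contributions proves \eqref{time-deriv-g}.

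Finally, for the assertion $\partial_t g_\eps\in\boldsymbol V_\eps'$ I would note that the right-hand side of \eqref{time-deriv-g} is controlled by $\|\varphi\|_{\boldsymbol V_\eps}$: the factor $\|D\varphi\|_{L^p}$ is part of that norm, while $\|\varphi\|_{L^{p(n+2)/n}}$ is dominated by $\|\varphi\|_{\boldsymbol V_\eps}$ through the standard parabolic embedding $L^\infty(L^2)\cap L^p(W^{1,p}_0)\hookrightarrow L^{p(n+2)/n}$. Since $u\in L^p(0,T;W^{1,p})$ and $b$ is locally in $L^{p(n+2)/(p(n+2)-n)}$, the prefactors are finite, so $\varphi\mapsto\iint_{\Omega_T}g_\eps\cdot\partial_t\varphi\,\dx\dt$ extends to a bounded linear functional on $\boldsymbol V_\eps$, which is the meaning of $\partial_t g_\eps\in\boldsymbol V_\eps'$.
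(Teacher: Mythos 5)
Your proposal is correct and follows essentially the same route as the paper: test the weak formulation with $\eta_\eps\varphi$, estimate the diffusion and inhomogeneity terms by H\"older with the same exponents, and absorb the $1/\eps$ coming from $D\eta_\eps$ via Hardy's inequality on a convex domain using $\dist(\cdot,\partial\Omega_\eps)\le 4\eps$ on $\spt D\eta_\eps$, concluding with the parabolic Gagliardo--Nirenberg embedding for the $\boldsymbol V_\eps'$ statement. The only cosmetic difference is that you apply Lemma~\ref{lem:Hardy} on $\Omega_\eps\cap B_\rho$ slicewise, while the paper extends $\varphi$ by zero and applies it on $\Omega_\eps$; both are legitimate since each domain is convex.
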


 Note that
$b\in L^{\frac{p(n+2)}{p(n+2)-n}}(\Omega_T)$, since
$\sigma>n+2>\frac{p(n+2)}{p(n+2)-n}$. Therefore, the right-hand
side of \eqref{time-deriv-g} is finite.

\begin{proof}
  Let
  $\varphi\in C_0^\infty((\Omega_\eps)_T\cap Q_\rho,\R^N)$, and consider
  the cut-off function $\eta_\eps(x)$ from~\eqref{def:g_eps}.
  Testing the weak form of \eqref{system} with $\eta_\eps\varphi$ and recalling~\eqref{bounds-a}, we estimate
  \begin{align}\label{time-deriv-g-1}
        \bigg|\iint_{\Omega_T}& g_\eps\cdot \partial_t\varphi\,\dx\dt\bigg|\\\nonumber
        &=
	\bigg|\iint_{\Omega_T} u\cdot \partial_t(\eta_\eps\varphi)\,\dx\dt\bigg|\\\nonumber
	&=
	\bigg|\iint_{\Omega_T} 
	\Big[ \mathbf{a}(Du)Du\cdot D(\eta_\eps\varphi) - 
	\eta_\eps b\cdot\varphi\Big]\,\dx\dt\bigg| \\\nonumber
	&\le
	cM \iint_{\Omega_T} 
	\big(\mu^2+|Du|^2\big)^{\frac{p-2}{2}}|Du|\, |D(\eta_\eps\varphi)|
          \,\dx\dt 
          + 
	\iint_{\Omega_T} |b| |\varphi| \,\dx\dt \\\nonumber
	&\le
          cM
          \bigg[\iint_{\Omega_T\cap \spt\varphi}\big(\lambda^p+|Du|^p\big)\dx\dt\bigg]^{\frac{p-1}p}\|D(\eta_\eps\varphi)\|_{L^p(\Omega_T)}\\\nonumber
	&\phantom{\le\,}+
	\|b\|_{L^{\frac{p(n+2)}{p(n+2)-n}}(\Omega_T\cap\spt\varphi)} \|\varphi\|_{L^{\frac{p(n+2)}{n}}((\Omega_\eps)_T\cap Q_\rho)}.
  \end{align}
 For the norm in the second-to-last
term, we have 
\begin{equation}\label{bound-cut-off}
  \|D(\eta_\eps\varphi)\|_{L^p(\Omega_T)}^p
  \le
  c\iint_{\Omega_T\cap Q_\rho}|D\varphi|^p\dx\dt
  +
  \frac{c}{\eps^p}\iint_{\Omega_T\cap
    Q_\rho\cap\spt(D\eta_\eps)}|\varphi|^p\dx\dt.
\end{equation}
In order to bound the last integral, we observe that for points $x$ in the domain of
integration, we have
\begin{equation*}
  \dist(x,\partial\Omega_\eps)
  \le
  2\eps+\dist(x,\partial\Omega)
  \le
  4\eps,
\end{equation*}
by the construction of $\Omega_\eps$ and since 
$\spt(D\eta_\eps)$ is contained in the $2\eps$-neighborhood of
$\partial\Omega$.
Therefore, we can apply Hardy's inequality from
Lemma~\ref{lem:Hardy} on the time slices $\Omega_\eps\times\{t\}$ for
a.e. $t\in (t_o-\rho^2,t_o)$, after extending $\varphi$ by zero on
$(\Omega_\eps\times\{t\})\setminus B_\rho$. Note that the constant in
Hardy's inequality only depends on $n$ and $p$, but independent of
$\eps$. As a result, we obtain
\begin{align*}
\frac{1}{\eps^p}\iint_{\Omega_T\cap
  Q_\rho\cap\spt(D\eta_\eps)}|\varphi|^p\dx\dt
&\le
c\iint_{\Omega_\eps\times(t_o-\rho^2,t_o)}
  \bigg(\frac{|\varphi|}{\dist(x,\partial\Omega_\epsilon)}\bigg)^p\dx\dt\\
&\le
 c\iint_{\Omega_\eps\times(t_o-\rho^2,t_o)}|D\varphi|^p\dx\dt.
\end{align*}
Joining this bound with~\eqref{bound-cut-off}, we arrive at 
$
  \|D(\eta_\eps\varphi)\|_{L^p}
  \le
  c\|D\varphi\|_{L^p},
$
for a constant $c=c(n,p)$.
Using this  in \eqref{time-deriv-g-1}, we deduce the asserted
estimate~\eqref{time-deriv-g}. Finally, we note that 
Gagliardo-Nirenberg's inequality implies 
\begin{align*}
	&\|\varphi\|_{L^{\frac{p(n+2)}{n}}((\Omega_\eps)_T\cap Q_\rho)}\\
	&\qquad\le
	\bigg[\iint_{(\Omega_\eps)_T\cap Q_\rho}|D\varphi|^p \,\dx\dt 
	\bigg(\sup_{t\in(t_o-\rho^2,t_o)}\int_{\Omega_\eps\cap B_\rho} |\varphi(\cdot,t)|^2 \,\dx
	\bigg)^{\frac{p}{n}}  
	\bigg]^{\frac{n}{p(n+2)}} \\[1ex]
	&\qquad\le
	c \|\varphi\|_{\boldsymbol V_\epsilon} .
\end{align*}
Therefore, the estimate~\eqref{time-deriv-g} can be rewritten in the form
\begin{align*}
  \bigg|\iint_{\Omega_T}& g_\eps\cdot\partial_t\varphi\,\dx\dt\bigg|\\\nonumber
  &\le
    c \Bigg[\bigg(\iint_{\Omega_T\cap\spt\varphi}\big(\lambda^p+|Du|^p\big)\dx\dt\bigg)^{\frac{p-1}p}+
    \|b\|_{L^{\frac{p(n+2)}{p(n+2)-n}}(\Omega_T\cap\spt\varphi)}\Bigg]
    \|\varphi\|_{\boldsymbol V_\epsilon}
\end{align*}
for any $\varphi\in C_0^\infty\big((\Omega_\eps)_T\cap Q_\rho,\R^N\big)$.
This proves the assertion $\partial_tg_\eps\in\boldsymbol  V_\epsilon'$. 
\end{proof}

We use the preceding estimate of the distributional time derivative of
$g_\eps$ for the proof of the desired energy estimate. The difficulty comes from the fact that $u$ and $u_\epsilon$ are solutions on different domains $\Omega_T$ and $(\Omega_\epsilon)_T$. For ease of notation, we define
\begin{equation*}
  V_\lambda(A):=\big(\lambda^2+|A|^2\big)^{\frac{p-2}{4}}A,
  \qquad\mbox{for $A\in \R^{Nn}$}.
\end{equation*}

\begin{lemma}[Energy estimate]\label{lem:energy}
For any
$\eps>0$ and any weak solution $u_\eps$ to the Cauchy-Dirichlet
  problem~\eqref{eq-ueps} we have
  \begin{align*}
	&  \sup_{\tau\in(t_o-\rho^2,t_o)}\int_{(\Omega_\epsilon\cap B_\rho)\times\{\tau\}} 
	|u_\epsilon-g_\eps|^2 \,\dx +
	\iint_{(\Omega_\epsilon)_T\cap Q_\rho} 
	\big|V_\lambda(Du_\epsilon)\big|^2\,\dx\dt \\\nonumber
         &\qquad \le c
        \iint_{\Omega_T\cap Q_{2\rho}}\big[\lambda^p+|Du|^p+\rho^{-p}|u|^p\big]\dx\dt
          +
    \big\||b|+|b_\eps|\big\|_{L^{\frac{p(n+2)}{p(n+2)-n}}((\Omega_\eps)_T\cap
           Q_\rho)}^{\frac{p(n+2)}{p(n+2)-n-p}}
  \end{align*}
  with a constant $c=c(n,p,m,M)$.
\end{lemma}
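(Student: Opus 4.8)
The plan is to test the Cauchy--Dirichlet problem \eqref{eq-ueps} with the comparison map $w:=u_\eps-g_\eps$. It belongs to $\boldsymbol V_\eps$ and vanishes on the whole parabolic boundary of $(\Omega_\eps)_T\cap Q_\rho$ --- on the lateral part by the Dirichlet datum in \eqref{eq-ueps}, and on the bottom because $u_\eps=g_\eps$ there in the $L^2$-sense. Fixing $\tau\in(t_o-\rho^2,t_o)$, integrating the equation over $(\Omega_\eps\cap B_\rho)\times(t_o-\rho^2,\tau)$ against $w$, and carrying out the integration by parts in time (justified by a Steklov-average or time-mollification argument, using $w(t_o-\rho^2)=0$ and $\partial_tu_\eps\in\boldsymbol V_\eps'$) one arrives at
\begin{align*}
  \tfrac12\int_{(\Omega_\eps\cap B_\rho)\times\{\tau\}}|w|^2\,\dx
  &+\iint\mathbf a_\eps(|Du_\eps|)|Du_\eps|^2\,\dx\dt\\
  &=\iint\mathbf a_\eps(|Du_\eps|)Du_\eps\cdot Dg_\eps\,\dx\dt
  +\iint b_\eps\cdot w\,\dx\dt
  -\big\langle\partial_t g_\eps,w\big\rangle,
\end{align*}
where all space-time integrals run over $(\Omega_\eps\cap B_\rho)\times(t_o-\rho^2,\tau)$ and $\langle\cdot,\cdot\rangle$ is the duality pairing between $\boldsymbol V_\eps'$ and $\boldsymbol V_\eps$. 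Since $g_\eps$ lives on a different domain than $u_\eps$, the last pairing is handled by Lemma~\ref{lem:time-u}: the bound \eqref{time-deriv-g} extends from $\varphi\in C^\infty_0((\Omega_\eps)_T\cap Q_\rho)$ to $\varphi=\psi\,w$, with $\psi$ a Lipschitz temporal cut-off approximating $\mathbf 1_{(t_o-\rho^2,\tau)}$, by density, since its right-hand side depends on $\varphi$ only through $\|D\varphi\|_{L^p}$ and $\|\varphi\|_{L^{p(n+2)/n}}$.

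Writing $\mathcal E$ for the left-hand side of the asserted estimate and $D:=\iint_{\Omega_T\cap Q_{2\rho}}\big[\lambda^p+|Du|^p+\rho^{-p}|u|^p\big]\,\dx\dt$, I estimate the four terms above. The second term on the left is, by the lower bound in \eqref{growth-a-delta}, at least $\tfrac{m}{c}\iint|V_\lambda(Du_\eps)|^2\,\dx\dt$, so together with the $|w|^2$-term (after taking $\sup_\tau$) the left-hand side controls a constant multiple of $\mathcal E$. For the first term on the right, the upper bound in \eqref{growth-a-delta} gives $|\mathbf a_\eps(|Du_\eps|)Du_\eps|\le cM(\lambda^2+|Du_\eps|^2)^{(p-1)/2}$, and a Young inequality for $V$-type quantities combined with Lemma~\ref{lem:Dge} yields
\begin{equation*}
  \iint\mathbf a_\eps(|Du_\eps|)Du_\eps\cdot Dg_\eps\,\dx\dt
  \le
  \delta\iint|V_\lambda(Du_\eps)|^2\,\dx\dt+c\,D,
\end{equation*}
the first term being absorbed on the left. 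Next, \eqref{time-deriv-g} applied to $w$, together with $\iint|Dw|^p\,\dx\dt\le c(\mathcal E+D)$ (obtained from $|Dw|^p\le c(|Du_\eps|^p+|Dg_\eps|^p)$, the pointwise inequality $|Du_\eps|^p\le c(|V_\lambda(Du_\eps)|^2+\lambda^p)$, $\lambda\le1$, and Lemma~\ref{lem:Dge}), gives via Young's inequality with exponents $\tfrac p{p-1}$ and $p$
\begin{equation*}
  \Big(\iint_{\Omega_T\cap Q_\rho}(\lambda^p+|Du|^p)\,\dx\dt\Big)^{\frac{p-1}p}\|Dw\|_{L^p}
  \le
  \delta(\mathcal E+D)+c\,D,
\end{equation*}
again with the $\delta\mathcal E$-part absorbed.

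It remains to treat the two summands of the shape $c(\|b\|_r+\|b_\eps\|_r)\|w\|_{L^{p(n+2)/n}((\Omega_\eps)_T\cap Q_\rho)}$, with $r:=\tfrac{p(n+2)}{p(n+2)-n}$, coming from $\iint b_\eps\cdot w$ and from the second summand of \eqref{time-deriv-g}. Here I use the Gagliardo--Nirenberg inequality recorded in the proof of Lemma~\ref{lem:time-u},
\begin{equation*}
  \|w\|_{L^{p(n+2)/n}((\Omega_\eps)_T\cap Q_\rho)}
  \le
  c\Big(\iint|Dw|^p\,\dx\dt\Big)^{\frac n{p(n+2)}}
  \Big(\sup_\tau\int|w(\cdot,\tau)|^2\,\dx\Big)^{\frac1{n+2}}
  \le
  c\,(\mathcal E+D)^{\frac{n+p}{p(n+2)}},
\end{equation*}
so that this contribution is at most $c(\|b\|_r+\|b_\eps\|_r)(\mathcal E+D)^{(n+p)/(p(n+2))}$, and Young's inequality with exponents $\tfrac{p(n+2)}{n+p}$ and $\tfrac{p(n+2)}{p(n+2)-n-p}$ bounds it by $\delta(\mathcal E+D)+c(\|b\|_r+\|b_\eps\|_r)^{p(n+2)/(p(n+2)-n-p)}$. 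Collecting all terms, absorbing the $\delta\mathcal E$-contributions on the left and the $\delta D$-contributions into the constant, and estimating $\|b\|_{L^r(\Omega_T\cap Q_\rho)}$ and $\|b_\eps\|_{L^r((\Omega_\eps)_T\cap Q_\rho)}$ by $\big\||b|+|b_\eps|\big\|_{L^r((\Omega_\eps)_T\cap Q_\rho)}$, one reaches exactly the claimed bound; note the exponent $\tfrac{p(n+2)}{p(n+2)-n-p}$ is precisely $\big(1-\tfrac{n+p}{p(n+2)}\big)^{-1}$, which is positive in the range $p>\tfrac{2n}{n+2}$. The main obstacle is the careful handling of the time derivatives across the two domains $\Omega_T$ and $(\Omega_\eps)_T$ --- legitimizing the integration by parts for $u_\eps$ and the use of Lemma~\ref{lem:time-u} for the pairing $\langle\partial_t g_\eps,w\rangle$; everything else is a routine chain of Young-type estimates and absorptions.
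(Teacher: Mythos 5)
Your proposal is correct and follows essentially the same route as the paper: you test with $u_\eps-g_\eps$ (your sharp temporal cut-off plus Steklov averaging playing the role of the paper's piecewise-linear $\zeta_\delta$ and the limit $\delta\downarrow0$), handle $\langle\partial_t g_\eps,u_\eps-g_\eps\rangle$ via Lemma~\ref{lem:time-u}, bound the diffusion cross-term with the $V_\lambda$-Young inequality and Lemma~\ref{lem:Dge}, and finish with the same Gagliardo--Nirenberg plus Young absorption for the $b$-terms. No substantive differences from the paper's argument.
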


\begin{proof}
For fixed $\tau\in (t_o-\rho^2,t_o)$ and $\delta\in(0,t_o-\tau)$ we let 
\begin{equation*}
	\zeta_\delta(t)
	:=
	\left\{\begin{array}{cl}
		1, & \mbox{for $t\in [t_o-\rho^2,\tau]$}, \\[3pt]
		\frac{\tau+\delta-t}{\delta}, & \mbox{for $t\in (\tau,\tau+\delta)$}, \\[3pt]
		0, & \mbox{for $t\in [\tau+\delta,t_o]$.}
	\end{array}\right.
\end{equation*}
As in Lemma~\ref{lem:time-u} one easily checks that solutions $u_\eps$ to the parabolic systems
\eqref{eq-ueps}  own a distributional time derivative $\partial_t
u_\epsilon\in \boldsymbol V_\epsilon'$. Therefore, the 
testing function $\zeta^2_\delta(u_\epsilon-g_\eps)\in V_\epsilon$ is admissible
in the weak form of \eqref{eq-ueps}, which implies
\begin{align}\label{weak-1}
	\big\langle \partial_tu_\eps, \zeta_\delta^2(u_\epsilon-g_\eps)\big\rangle
	&+ 
	\iint_{(\Omega_\epsilon)_T\cap Q_\rho} \zeta_\delta^2\mathbf{a}_\eps(Du_\epsilon) 
	Du_\epsilon\cdot (Du_\epsilon-Dg_\eps)
	\,\dx\dt \nonumber\\
	&=
	\iint_{(\Omega_\epsilon)_T\cap Q_\rho} 
	\zeta_\delta^2 b_\epsilon\cdot(u_\epsilon-g_\eps)\,\dx\dt.
\end{align}
Here, $\langle\cdot,\cdot\rangle$ denotes the duality pairing on 
$\boldsymbol V_\epsilon'\times \boldsymbol V_\epsilon$. 
We rewrite the first term on the left-hand side in the form
\begin{align*}
     \big\langle \partial_tu_\eps& ,
       \zeta_\delta^2(u_\epsilon-g_\eps)\big\rangle\\
     &=
       \big\langle \partial_t(u_\eps-g_\eps),
       \zeta_\delta^2(u_\epsilon-g_\eps)\big\rangle
       +
       \big\langle \partial_tg_\eps,
       \zeta_\delta^2(u_\epsilon-g_\eps)\big\rangle\\
     &=
         \big\langle \partial_t(\zeta_\delta (u_\eps-g_\eps)),
          \zeta_\delta(u_\epsilon-g_\eps)\big\rangle
          -
          \iint_{(\Omega_\epsilon)_T\cap Q_\rho} 
	  \zeta_\delta'\zeta_\delta |u_\epsilon-g_\eps|^2
       \,\dx\dt\\
       &\phantom{=\,}+
       \big\langle \partial_tg_\eps,
       \zeta_\delta^2(u_\epsilon-g_\eps)\big\rangle\\
	&=:
	\mathbf{I}(\delta) + \mathbf{II}(\delta) +\mathbf{III}(\delta),
\end{align*} 
with the obvious meaning of  $\mathbf{I}(\delta)$ to $\mathbf{III}(\delta)$.
For the first term we find 
\begin{align*}
  \mathbf{I}(\delta)
  =
  \tfrac12 \iint_{(\Omega_\eps)_T\cap Q_\rho} 
  \partial_t|\zeta_\delta (u_\epsilon-g_\eps)|^2 \,\dx\dt 
  =
  0,
\end{align*}
since $\zeta_\delta(t_o)=0$ and $u_\eps=g_\eps$ on the initial time
slice $(\Omega_\eps\cap B_\rho)\times\{t_o-\rho^2\}$.
By the mean value theorem we obtain  
\begin{align*}
	\lim_{\delta\downarrow 0} \mathbf{II}(\delta)
	&=
	\tfrac12 \int_{(\Omega_\epsilon\cap B_\rho)\times\{\tau\}} 
	|u_\epsilon-g_\eps|^2 \,\dx .
\end{align*} 
Finally, we estimate the third term by means of
Lemma~\ref{lem:time-u}, with the result
\begin{align*}
  \big|\mathbf{III}(\delta)\big|
  &\le
  c \bigg[\iint_{\Omega_T\cap Q_\rho}\big(\lambda^p+|Du|^p\big)\dx\dt\bigg]^{\frac{p-1}p}
    \|Du_\eps-Dg_\eps\|_{L^p((\Omega_\eps)_T\cap Q_\rho)}\\[1ex]
    &\quad+
    \|b\|_{L^{\frac{p(n+2)}{p(n+2)-n}}((\Omega_\eps)_T\cap Q_\rho)}
    \|u_\eps-g_\eps\|_{L^{\frac{p(n+2)}{n}}((\Omega_\eps)_T\cap Q_\rho)}.
\end{align*}
For the last term in~\eqref{weak-1}, a straightforward application of
H\"older's inequality yields
\begin{align*}
  \iint_{(\Omega_\epsilon)_T\cap Q_\rho} &
  \zeta_\delta^2 b_\epsilon\cdot(u_\epsilon-g_\eps)\,\dx\dt\\
  &\le
  \|b_\eps\|_{L^{\frac{p(n+2)}{p(n+2)-n}}((\Omega_\eps)_T\cap Q_\rho)}
    \|u_\eps-g_\eps\|_{L^{\frac{p(n+2)}{n}}((\Omega_\eps)_T\cap Q_\rho)}.
\end{align*}
The preceding considerations allow us to pass to the limit
$\delta\downarrow0$ in \eqref{weak-1}. 
In the term not yet considered, i.e.~the one containing the
coefficients $\mathbf a_\eps$, the passage to the limit under the
integral can be justified by dominated convergence. Overall we get 
\begin{align}\label{weak-2}\nonumber
	 \tfrac12 \int_{(\Omega_\epsilon\cap B_\rho)\times\{\tau\}}& 
	|u_\epsilon-g_\eps|^2 \,\dx +
	\iint_{(\Omega_\epsilon)_\tau\cap Q_\rho} 
	\mathbf{a}_\eps(Du_\epsilon) 
	Du_\epsilon\cdot (Du_\epsilon-Dg_\eps)
	\,\dx\dt \\\nonumber
	&\le
          c \bigg[\iint_{\Omega_T\cap Q_\rho}\big(\lambda^p+|Du|^p\big)\dx\dt\bigg]^{\frac{p-1}p}
    \|Du_\eps-Dg_\eps\|_{L^p((\Omega_\eps)_T\cap Q_\rho)}\\[1ex]
    &\phantom{\le\,}+
    2\big\||b|+|b_\eps|\big\|_{L^{\frac{p(n+2)}{p(n+2)-n}}((\Omega_\eps)_T\cap Q_\rho)}
    \|u_\eps-g_\eps\|_{L^{\frac{p(n+2)}{n}}((\Omega_\eps)_T\cap Q_\rho)}
\end{align}
for any $\tau\in(t_o-\rho^2,t_o)$. 
By the growth properties~\eqref{growth-a-delta} of $\mathbf{a}_\eps$ and
Young's inequality for the $V_\lambda$-function \cite[Lemma
2.3]{Acerbi-Mingione-electro} we obtain for the diffusion term 
\begin{align*}
	&\iint_{(\Omega_\epsilon)_\tau\cap Q_\rho} 
	\mathbf{a}_\eps(Du_\epsilon) Du_\epsilon\cdot (Du_\epsilon-Dg_\eps)
	\,\dx\dt \\
	&\qquad\ge
	\tfrac{m}{c} \iint_{(\Omega_\epsilon)_\tau\cap Q_\rho} 
	|V_\lambda(Du_\epsilon)|^2 
          \,\dx\dt\\
         &\qquad\qquad-
	cM \iint_{(\Omega_\epsilon)_\tau\cap Q_\rho} 
	\big(\lambda^2 + |Du_\epsilon|^2\big)^{\frac{p-2}{2}} 
	|Du_\epsilon||Dg_\eps| \,\dx\dt \\
	&\qquad\ge
	\tfrac{m}{2c} \iint_{(\Omega_\epsilon)_\tau\cap Q_\rho} 
	|V_\lambda(Du_\epsilon)|^2 
	\,\dx\dt -
	c \iint_{\Omega_\tau\cap Q_\rho} 
	|V_\lambda(Dg_\eps)|^2 \,\dx\dt.
\end{align*} 
We join the two preceding estimates, take the supremum over
$\tau\in(t_o-\rho^2,t_o)$ in the first term on the left-hand side and
let $\tau\uparrow t_o$ in the second one. This gives 
\begin{align}\label{weak-3}
	 \mathbf S&+
	\iint_{(\Omega_\epsilon)_T\cap Q_\rho} 
	|V_\lambda(Du_\epsilon)|^2\,\dx\dt \\\nonumber
	&\le
         c \bigg[\iint_{\Omega_T\cap Q_\rho}\big(\lambda^p+|Du|^p\big)\dx\dt\bigg]^{\frac{p-1}p}
    \|Du_\eps-Dg_\eps\|_{L^p((\Omega_\eps)_T\cap Q_\rho)}\\[1ex]\nonumber
    &\phantom{\le\,}
    +
    c\big\||b|+|b_\eps|\big\|_{L^{\frac{p(n+2)}{p(n+2)-n}}((\Omega_\eps)_T\cap Q_\rho)}
    \|u_\eps-g_\eps\|_{L^{\frac{p(n+2)}{n}}((\Omega_\eps)_T\cap Q_\rho)}\\
    &\phantom{\le\,}+
         c \iint_{\Omega_T\cap Q_\rho} 
	|V_\lambda(Dg_\eps)|^2 \,\dx\dt \nonumber
\end{align} 
with the abbreviation
\begin{equation*}
	\mathbf S:=\sup_{\tau\in(t_o-\rho^2,t_o)}\int_{(\Omega_\epsilon\cap B_\rho)\times\{\tau\}} 
	|u_\epsilon-g_\eps|^2 \,\dx.
\end{equation*}
The inequalities of Gagliardo-Nirenberg and Young provide us with the estimate
\begin{align*}
  \|u_\eps-g_\eps\|_{L^{\frac{p(n+2)}{n}}((\Omega_\eps)_T\cap Q_\rho)}
  &\le
  c\bigg[\mathbf S^{\frac{p}{n}}
    \iint_{(\Omega_\eps)_T\cap Q_\rho}|Du_\eps-Dg_\eps|^p \,\dx\dt 	
    \bigg]^{\frac{n}{p(n+2)}} \\
  &\le
    c\bigg[
   \mathbf S
    +
    \iint_{(\Omega_\eps)_T\cap Q_\rho}|Du_\eps-Dg_\eps|^p
    \,\dx\dt
    \bigg]^{\frac{n+p}{p(n+2)}}.
\end{align*}
We use this bound in~\eqref{weak-3} and apply Young's inequality,
with the result
\begin{align*}
	 \mathbf S &+
	\iint_{(\Omega_\epsilon)_T\cap Q_\rho} 
	|V_\lambda(Du_\epsilon)|^2\,\dx\dt \\\nonumber
	&\le
          \tfrac12\mathbf S
	 +
	\tfrac12\iint_{(\Omega_\epsilon)_T\cap Q_\rho} 
          |V_\lambda(Du_\epsilon)|^2\,\dx\dt
          +c
           \iint_{\Omega_T\cap Q_\rho}\big(\lambda^p+|Du|^p\big)\dx\dt\\
         & \phantom{\le\,}\nonumber+
         c \iint_{\Omega_T\cap Q_\rho} 
	|V_\lambda(Dg_\eps)|^2 \,\dx\dt
	+
	c
    \big\||b|+|b_\eps|\big\|_{L^{\frac{p(n+2)}{p(n+2)-n}}((\Omega_\eps)_T\cap Q_\rho)}^{\frac{p(n+2)}{p(n+2)-n-p}}.
\end{align*}
In turn we used the elementary estimate
$
	|A|^p \le |V_\lambda (A)|^2 +\lambda^p
$.
The first two terms on the right-hand side can be absorbed into the
left. Finally, we use Lemma~\ref{lem:Dge} to estimate the term involving $Dg_\eps$ by 
\begin{align*}
  \iint_{\Omega_T\cap Q_\rho} |V_\lambda(Dg_\eps)|^2 \,\dx\dt
  \le
    c \iint_{\Omega_T\cap Q_{2\rho}} \big[ \lambda^p+|Du|^p+\rho^{-p}|u|^p\big]\dx\dt.
\end{align*}
Inserting this above, we
arrive at the desired estimate.
\end{proof}

\begin{remark}\label{local-energy} \upshape
The same arguments yield the following local (in time) energy 
estimate
 \begin{align*}
	&  \sup_{\tau\in(t_o-\rho^2,s)}\int_{(\Omega_\epsilon\cap B_\rho)\times\{\tau\}} 
	|u_\epsilon-g_\eps|^2 \,\dx +
	\iint_{(\Omega_\epsilon)_s\cap Q_\rho} 
	\big|V_\lambda(Du_\epsilon)\big|^2\,\dx\dt \\\nonumber
         &\qquad \le c
        \iint_{\Omega_s\cap Q_{2\rho}}\big[\lambda^p+|Du|^p+\rho^{-p}|u|^p\big]\dx\dt
          +
          c
    \big\||b|+|b_\eps|\big\|_{L^{\frac{p(n+2)}{p(n+2)-n}}((\Omega_\eps)_s\cap
           Q_\rho)}^{\frac{p(n+2)}{p(n+2)-n-p}}
\end{align*}
for any $s\in (t_o-\rho^2, t_o]$. \hfill$\Box$
\end{remark} 

\subsection{Proof of the gradient estimate}\label{sec:proof}

We recall \eqref{measure-density-Omega-delta}, \eqref{growth-a-delta}
and \eqref{b-compact-support} and the fact $u_\eps\in
C^3\big((\overline\Omega_\epsilon)_T\cap Q_\rho(z_o)\big)$
(see Appendix~\ref{app:smooth}). Therefore, 
Proposition~\ref{prop:apriori} is applicable with $u,\mathbf{a},b,\Omega,\Theta$ replaced
by $u_\eps,\mathbf{a}_{\eps},b_\eps,\Omega_\eps,\Theta_{\rho/2}(x_o)$. We thus
obtain the gradient estimate
\begin{align}\label{sup-est-w-delta}
	&\sup_{(\Omega_\eps)_T\cap Q_{\rho/2}}
	\big( 1+|Du_\eps|^2\big)^{\frac{1}{2}}\nonumber\\
	&\qquad \le
    C\bigg[\Big(1+ \rho^{n+2}
    \| b_\epsilon\|_{L^\sigma((\Omega_\epsilon)_T\cap Q_\rho)}^{\frac{(n+2)\sigma}{\sigma-n-2}}\Big) 
    \biint_{(\Omega_\epsilon)_T \cap Q_{3\rho/4}}
	\big( 1+|Du_\epsilon|^p\big)
	\,\dx\dt\bigg]^{\frac{d}p}.
\end{align}
In view of~\eqref{measure-density-Omega-delta} and
\eqref{growth-a-delta}, the constant $C$ in the preceding inequality
depends only on $n,N,p,m,M,$ and $\Theta_{\rho/2}(x_o)$, but is independent of $\eps$. 
The energy estimate from Lemma~\ref{lem:energy} implies
\begin{align}\label{energy-bounded}
  \sup_{\tau\in(t_o-\rho^2,t_o)}\int_{(\Omega_\epsilon\cap B_\rho)\times\{\tau\}} 
  |u_\epsilon-g_\eps|^2 \,\dx
  +
  \iint_{(\Omega_\eps)_T\cap Q_\rho}|Du_\eps|^p\dx\dt
  \le
  C,
\end{align}
with a constant $C$ independent of $\epsilon\in(0,1)$; note in particular that
$\|b_\eps\|_{L^\sigma}$ is bounded independently of
$\epsilon$. We combine this with the gradient sup-estimate from
\eqref{sup-est-w-delta} replacing $(\frac\rho2,\frac{3\rho}4)$ by
$(\frac{3\rho}4,\rho)$. This yields the uniform bound 
\begin{align}\label{sup-unif}
  \sup_{(\Omega_\eps)_T\cap Q_{3\rho/4}} |Du_\eps|\le C,
\end{align}
with a constant $C$ independent of $\eps$. 
From the construction of the boundary values $g_\eps$ it is clear that $g_\eps\to u$ in
$L^p(\Omega_T\cap Q_\rho)$ as $\eps\downarrow0$.
Moreover, Lemma~\ref{lem:Dge} ensures that 
\begin{align}\label{boundary-energy-est}
  \iint_{\Omega_T\cap Q_\rho}&\big[|Dg_\eps|^p+|g_\eps|^p\big]\dx\dt 
  \le
  c\iint_{\Omega_T\cap Q_{2\rho}}\big[|Du|^p+\rho^{-p}|u|^p\big]\,\dx\dt,
\end{align}
for every $\eps\in(0,1)$. 
We therefore deduce
\begin{equation}\label{geps-weak-conv}
	\mbox{$g_\eps\wto u$ weakly in $L^p\big(t_o-\rho^2,t_o;W^{1,p}(\Omega\cap B_\rho,\R^N)\big)$ as $\eps\downarrow0$.}
\end{equation}
Moreover, Poincar\'e's inequality and \eqref{boundary-energy-est} yield the bound
\begin{align}\label{ueps-Lp-bound}
  &\iint_{(\Omega_\eps)_T\cap Q_\rho} |u_\eps|^p\,\dx\dt\\\nonumber
  &\qquad\le
  c \iint_{(\Omega_\eps)_T\cap Q_\rho}|u_\eps-g_\eps|^p\,\dx\dt
  +c\iint_{(\Omega_\eps)_T\cap Q_\rho}|g_\eps|^p\,\dx\dt\\\nonumber
  &\qquad\le
  c\rho^p\iint_{(\Omega_\eps)_T\cap Q_\rho}|Du_\eps|^p\,\dx\dt
    +c\iint_{\Omega_T\cap Q_{2\rho}}\big[\rho^p|Du|^p+|u|^p\big]\,\dx\dt.
\end{align}
We extend $u_\eps$ by zero on $Q_\rho\setminus(\Omega_\eps)_T$. Since
$u_\eps=0$ on $(\partial\Omega_\eps)_T\cap Q_\rho$ in the sense of traces,
the extended maps satisfy $u_\eps\in
L^p\big(t_o-\rho^2,t_o;W^{1,p}(B_\rho,\R^N)\big)$ for every
$\eps\in(0,1)$. Moreover, estimates~\eqref{energy-bounded}
and~\eqref{ueps-Lp-bound} imply that the family
$(u_\eps)_{\eps\in(0,1)}$ is bounded in the latter space.
Therefore, we find $\eps_i\downarrow0$ and a limit map $\tilde u\in
L^p(t_o-\rho^2,t_o;W^{1,p}(B_\rho,\R^N))$ such that
\begin{equation}\label{ueps-weak-conv}
	\mbox{$u_{\eps_i}\wto \tilde u$ weakly in $L^p\big(t_o-\rho^2,t_o;W^{1,p}(B_\rho,\R^N)\big)$
	as
    $i\to\infty$.}
\end{equation}
In view of the  uniform $L^\infty{-}L^2$ bound~\eqref{energy-bounded}
we can pass to a non-relabelled subsequence to deduce that
$\tilde u\in L^\infty(t_o-\rho^2,t_o;L^2(B_\rho,\R^N))$ and 
\begin{equation*}
	\mbox{$u_{\eps_i}-g_{\eps_i}\wsto \tilde u-u$
	weakly$^\ast$ in $L^\infty\big(t_o-\rho^2,t_o;L^2(B_\rho,\R^N)\big)$ as
    	$i\to\infty$.}
\end{equation*}
By construction, the maps $u_\eps$ agree with $g_\eps$ on the lateral
boundary in the sense that
\begin{equation*}
u_\eps-g_\eps\in L^p\big(t_o-\rho^2,t_o;W^{1,p}_0(B_\rho,\R^N)\big).
\end{equation*}
Because of the weak convergences~\eqref{geps-weak-conv}
and~\eqref{ueps-weak-conv}, this boundary condition is preserved in
the limit $\eps_i\downarrow0$, from which we deduce 
\begin{equation}\label{boundary-condition-1}
\tilde u-u\in L^p\big(t_o-\rho^2,t_o;W^{1,p}_0(B_\rho,\R^N)\big).
\end{equation}
Now, let  $\epsilon_o>0$ and consider the outer parallel set
$O_{2\eps_o}:=\big\{x\in B_\rho\colon \dist(x,\Omega)<2\eps_o\big\}$.
Since $\Omega_\eps\subset O_{2\eps_o}$ for every $\eps\in(0,\eps_o)$,
we have
\begin{equation*}
	\mbox{$u_\eps-g_\eps=0\;$ a.e.~on $Q_\rho\setminus (O_{2\eps_o})_T$, for every $\eps\in(0,\eps_o)$.}
\end{equation*}
Also this property is preserved in the limit $\eps_i\downarrow0$,
which implies that $\tilde u=u$ a.e.~on $Q_\rho\setminus
(O_{2\eps_o})_T$ for every $\eps_o>0$. In turn, we conclude
\begin{equation}\label{boundary-condition-2}
	\mbox{$\tilde u=u\;$   a.e.~on $Q_\rho\setminus\Omega_T$.  }
\end{equation}
Combining the properties~\eqref{boundary-condition-1}
and~\eqref{boundary-condition-2}, we infer the desired boundary condition
\begin{equation*}
\tilde u\in u + L^p\big(t_o-\rho^2,t_o;W^{1,p}_0(\Omega\cap B_\rho,\R^N)\big)
\end{equation*}
for the limit map $\tilde u$. 
Our next goal is to show that the limit map $\tilde u$ attains the expected initial
values at the initial time $t_o-\rho^2$. To this end, we exploit the
lower semicontinuity of the $L^2$-norm with respect to weak
convergence and the local (in time) energy estimate from Remark \ref{local-energy} to estimate
\begin{align*}
  \tfrac1h\int_{t_o-\rho^2}^{t_o-\rho^2+h} &
  \|\tilde u(t)-u(t)\|^2_{L^2(\Omega\cap B_\rho)}\dt\\
  &\le
  \liminf_{\eps\downarrow0}
  \tfrac1h\int_{t_o-\rho^2}^{t_o-\rho^2+h}
  \|u_\eps(t)-g_\eps(t)\|^2_{L^2(\Omega\cap B_\rho)} \dt\\
  &\le
  c\iint_{(\Omega\cap
    B_{2\rho})\times (t_o-\rho^2 , t_o-\rho^2+h)}\big[\lambda^p+|Du|^p+\rho^{-p}|u|^p\big]\dx\dt\\
  &\phantom{\le\,} +
  c\big\|b\big\|_{L^{\frac{p(n+2)}{p(n+2)-n}}((\Omega\cap B_\rho)\times(t_o-\rho^2,t_o-\rho^2+h))}^{\frac{p(n+2)}{p(n+2)-n-p}},
\end{align*}
for every $h\in(0,\rho^2)$. Since the right-hand side of the last inequality converges to 0 as  $h\downarrow 0$ we infer
$$
	\lim_{h\downarrow 0} \tfrac1h
	\int_{t_o-\rho^2}^{t_o-\rho^2+h}
	\|\tilde u(t)-u(t)\|^2_{L^2(\Omega\cap B_\rho)}\dt =0.
$$
Since $u\in C^0([0,T];L^2(\Omega,\R^N))$
by assumption, this implies that $\tilde u=u$ on 
$(\Omega\cap B_\rho)\times \{t_o-\rho^2\}$ in the usual $L^2$-sense.
At this stage it remains to verify the differential equation for the limit map
$\tilde u$. 
For a fixed compact set $K\Subset \Omega_T\cap Q_{\rho}$, the interior
$C^{1,\alpha}$-estimates from \cite[Chapter IX, Theorem~1.1, Chapter~VIII, Theorems~5.1 and~5.2']{DB} and the uniform energy bound~\eqref{energy-bounded} imply
\begin{equation}\label{C1alpha-bound}
  \|Du_\epsilon\|_{C^{0,\alpha}(K)}\le C
\end{equation}
for every $\eps\in(0,1)$, for some H\"older exponent $\alpha\in(0,1)$ and some constant $C>0$, 
both independent of $\eps$. This allows us to apply Ascoli-Arz\'ela's theorem
to conclude that $Du_\epsilon$ converges uniformly to $D\tilde u$ on
compact subsets of $\Omega_T\cap Q_{\rho}$. In particular, we have
$Du_\epsilon\to D\tilde u$ pointwise in $\Omega_T\cap Q_{\rho}$.
In view of the uniform gradient bound on compact subsets contained in
\eqref{C1alpha-bound}
and the property~\eqref{a-delta-convergence} of the regularized coefficients,
we can use dominated convergence to pass to the limit in the weak
formulation of the system~\eqref{eq-ueps}.
We conclude
that the limit map $\tilde u$ is a weak solution to the system
\eqref{system} on $\Omega_T\cap Q_{\rho}$. Moreover, we know that $\tilde u=u$ on $\partial_{p}(\Omega_T\cap Q_{\rho})$. By uniqueness of solutions this shows that $\tilde u\equiv u$ in $\Omega_T\cap Q_{\rho}$.

Moreover, due
to the sup-bound for the spatial gradient \eqref{sup-unif} we may
apply the dominated convergence theorem to get
\begin{equation}
  \label{ueps-strong-conv}
  \mbox{$Du_{\eps_i}\to D\tilde u=Du\;$ strongly in $L^p(Q_{3\rho/4},\R^N)$ in the limit
    $\eps_i\downarrow0$,} 
\end{equation}
where we extended $u_{\eps_i}$ by zero on
$Q_{3\rho/4}\setminus(\Omega_{\eps_i})_T$.
This strong convergence enables us to 
pass to the limit $\eps_i\downarrow 0$ on the right-hand side of
\eqref{sup-est-w-delta}. Note that the construction of $b_\eps$ ensures the convergence $\|
b_\epsilon\|_{L^\sigma((\Omega_\epsilon)_T\cap Q_\rho)}\to \|
b\|_{L^\sigma(\Omega_T\cap Q_\rho)}$. On
the left-hand side of \eqref{sup-est-w-delta} we may pass to the limit due to the pointwise convergence. In this way, we obtain
\begin{align}\label{sup-est-tilde-u}
	&\sup_{\Omega_T\cap Q_{\rho/2}}
	\big( 1+|D u|^2\big)^{\frac{1}{2}}\nonumber\\
	&\qquad \le
    C\bigg[\Big(1+ \rho^{n+2}
    \| b\|_{L^\sigma(\Omega_T\cap Q_\rho)}^{\frac{(n+2)\sigma}{\sigma-n-2}}\Big) 
    \biint_{\Omega_T \cap Q_{3\rho/4}}
	\big( 1+|D u|^p\big)
	\,\dx\dt\bigg]^{\frac{d}p}.
\end{align}
This yields the asserted
sup-estimate for the gradient of $u$, and  
completes the proof of Theorem~\ref{thm:main}.
\hfill\qed
\appendix

\section{Properties of the regularized coefficients}\label{app:reg}

Here, we provide proofs for the properties of the regularized coefficients $\mathbf a_{\eps}$ stated in Subsection~\ref{section:reg-coeff}. 
The first line in \eqref{growth-a-delta} follows directly from the
definition of~$\mathbf c_\eps$ and the growth
condition~\eqref{bounds-a} for $\mathbf{a}$. The constant $c$ can be
chosen in the form $c(p)\max\{ 1,\mathrm{e}^{p-2}\}$ with the constant $c(p)$ from 
\eqref{bounds-a}. Concerning the ellipticity condition, we observe that
\begin{equation}\label{adelta}
  	\mathbf a_{\eps}'(r)r+\mathbf a_{\eps}(r)
  	=
  	\mathbf c_\eps'(\log r)+\mathbf c_\eps(\log r)
  	=
  	\big(\phi_\eps\ast(\mathbf c'+\mathbf c)\big)(\log r)
	\quad\mbox{for any $r>0$. }
\end{equation}
For the function $\mathbf c'+\mathbf c$ appearing on
the right-hand side, we have in the case $\mu=0$ that 
\begin{align*}
	&\mathbf c'(s)+\mathbf c(s)\\
         &\quad=
         \frac{1}{\eps^2+\mathrm{e}^{2s}}
         \bigg[\mathrm{e}^{2s}
         \Big[
         \sqrt{\eps^2+\mathrm{e}^{2s}}\,\mathbf{a}'\big(\sqrt{\eps^2+\mathrm{e}^{2s}}\big)
         +
         \mathbf{a}\big(\sqrt{\eps^2 +\mathrm{e}^{2s}}\big)
         \Big]
         +
         \eps^2\mathbf{a}\big(\sqrt{\eps^2+\mathrm{e}^{2s}}\big)\bigg]
\end{align*}
for any $s\in\R$.
Using the lower bounds from~\eqref{assumption:a'} and
\eqref{bounds-a}, we deduce 
\begin{align}\label{cdelta}
	\mathbf c'(s)+\mathbf c(s)
         &\ge
         c(p)^{-1}m(\eps^2 +\mathrm{e}^{2s})^{\frac{p-2}2}.
\end{align}
On the other hand, in the case $\mu>0$ we have
\begin{align}\label{cdelta_}
	\mathbf c'(s)+\mathbf c(s)
	&=
	 \mathbf{a}'(\mathrm{e}^{s})\mathrm{e}^{s}
       +\mathbf{a}(\mathrm{e}^{s})
       \ge
       m(\mu^2 +\mathrm{e}^{2s})^{\frac{p-2}2}
\end{align}
for any $s\in\R$.
Similarly as above, we infer from
\eqref{adelta}, \eqref{cdelta}, \eqref{cdelta_} and the definition of $\lambda$ that 
\begin{equation*}
  \mathbf a_{\eps}'(r)r+\mathbf a_{\eps}(r)
  \ge
   c(p)^{-1}m\min \big\{ 1,\mathrm{e}^{-(p-2)}\big\}\big(\lambda^2
   +r^2\big)^\frac{p-2}{2}
   \quad\mbox{for any }r>0.
 \end{equation*}
 This yields the lower bound in~\eqref{growth-a-delta}$_2$. 
 Similarly, by applying the upper bound from \eqref{assumption:a'} (taking also into account the fact
 that $\mathbf a$ is non-negative, cf.~\eqref{bounds-a}), we obtain 
 \begin{align*}
   (\phi_\eps\ast\mathbf{c}')(s)
   \le 
   c(p) M\max\{1,\mathrm{e}^{p-2}\} \big(\lambda^2+\mathrm
    e^{2s}\big)^{\frac{p-2}{2}}
    \quad\mbox{for any }s\in\R. 
  \end{align*}
  From this we deduce  
 \begin{align*}
   \mathbf{a}_{\eps}'(r)r
   &=
   (\phi_\eps\ast\mathbf{c}')(\log r)
   \le
   c(p)M\max\{1,\mathrm{e}^{p-2}\} \big(\lambda^2+r^2\big)^{\frac{p-2}{2}}
   \quad\mbox{for any }r>0,
 \end{align*}
 which implies the asserted upper bound in~\eqref{growth-a-delta}$_2$. 
 At this stage it remains to derive the estimate for the second derivative $\mathbf a_\eps''$. To this
 end, we compute
 \begin{equation*}
   \mathbf a_{\eps}''(r)r^2
   =
   \mathbf{c}_\eps''(\log r)-\mathbf{c}_\eps'(\log r)
   =
   \big((\phi_\eps'-\phi_\eps)\ast \mathbf c'\big)(\log r).
 \end{equation*}
 Then we use~\eqref{assumption:a'} and \eqref{bounds-a} to derive in the case $\mu=0$ the bound
\begin{align*}
 	|\mathbf c'(s)|
	&=
	\bigg|\mathbf{a}'\big(\sqrt{\eps^2 +\mathrm{e}^{2s}}\big)\frac{\mathrm{e}^{2s}}{\sqrt{\eps^2 +\mathrm{e}^{2s}}}\bigg|\\\nonumber
	&\le
        \Big|\sqrt{\eps^2+\mathrm{e}^{2s}}\,\mathbf{a}'\big(\sqrt{\eps^2
          +\mathrm{e}^{2s}}\big)+\mathbf{a}\big(\sqrt{\eps^2+\mathrm{e}^{2s}}\big)\Big|
        +\Big|\mathbf{a}\big(\sqrt{\eps^2
          +\mathrm{e}^{2s}}\big)\Big|\\
        &\le
	\big( 1+c(p)\big)M \big(\eps^2 +\mathrm e^{2s}\big)^\frac{p-2}{2}
	\le
	2c(p)M \big(\eps^2 +\mathrm e^{2s}\big)^\frac{p-2}{2},
 \end{align*}
while in the case $\mu>0$ we obtain
\begin{align*}
 	|\mathbf c'(s)|
	&=
	\big|\mathbf{a}'(\mathrm{e}^{s})\mathrm{e}^{s}\big|
	\le
    \big|\mathrm{e}^{s}\,\mathbf{a}'(\mathrm{e}^{s}) +
    \mathbf{a}(\mathrm{e}^{s})\big| +
    \big|\mathbf{a}(\mathrm{e}^{s})\big|\\
        &\le
	\big( 1+c(p)\big)M \big(\mu^2 +\mathrm e^{2s}\big)^\frac{p-2}{2}
	\le
	2c(p)M \big(\mu^2 +\mathrm e^{2s}\big)^\frac{p-2}{2}.
 \end{align*}
Hence, in both cases we have
\begin{align}\label{cprime-bound}
 	|\mathbf c'(s)|
	\le
	2c(p)M \big(\lambda^2 +\mathrm e^{2s}\big)^\frac{p-2}{2}.
\end{align}
From this we deduce, similarly as above, that
\begin{align*}
  |\mathbf a_{\eps}''(r)|r^2
  &=
  |((\phi_\eps'-\phi_\eps)\ast\mathbf c')(s)|\\
  &\le
  2c(p)M\max\{1,\mathrm{e}^{p-2}\} \big(\lambda^2+\mathrm
  e^{2s}\big)^{\frac{p-2}{2}}\int_\R|\phi_\eps'-\phi_\eps|\,\d s\\
  &\le 
  2c(p)M
  \Big( \tfrac{2}{\eps}\|\phi'\|_{L^\infty}+1\Big)
\max\big\{1,\mathrm{e}^{p-2}\big\} \big(\lambda^2+\mathrm
  e^{2s}\big)^{\frac{p-2}{2}}.
\end{align*}
The proof of the claim~\eqref{growth-a-delta} is thus complete.
Finally, we analyze the convergence of $\mathbf a_{\eps} (r)$ in the
limit $\eps\downarrow 0$ and thereby prove
\eqref{a-delta-convergence}.
For any $s\in\R$ we estimate
\begin{align*}
	\big| \mathbf c_\eps (s)- \mathbf c (s)\big|
	&\le 
	\sup_{s-\eps<r<s+\eps} 
	\,\big|\mathbf{c}(r)-\mathbf{c}(s)\big|
	=\sup_{s-\eps<r<s+\eps} 
	\bigg| \int_s^r \mathbf{c}'(\tau)\,\dtau\bigg|
	\\[1ex]
	&\le
	2c(p)M
	\sup_{s-\eps<r<s+\eps}\bigg| \int_s^r \big(\eps^2 +\mathrm e^{2\tau}\big)^\frac{p-2}{2}\dtau\bigg|
	\\
	&\le
	2c(p)M \eps\max\big\{ 1,\mathrm{e}^{p-2}\big\}
        \big(\eps^2 + \mathrm e^{2s}\big)^{\frac{p-2}{2}},
\end{align*}
where the second-to-last step follows from~\eqref{cprime-bound}.
This gives the desired estimate
\eqref{a-delta-convergence}.


\section{Regularity up to the boundary}\label{app:smooth}

Here, we show that solutions to the regularized
problem \eqref{eq-ueps} are smooth up to the boundary as claimed at the end of Section~\ref{sec:regularization}.
To this end, we follow the strategy
of  Banerjee \& Lewis \cite[Appendix.~Proof of
(2.7)]{BanerjeeLewis:2014} to flatten the boundary and then to reduce
the problem of boundary regularity to the interior case by a
reflection argument. 

\subsection{Schauder estimates for linear parabolic systems}
In this section, we explain Schauder estimates for linear parabolic systems of the type
\begin{equation}\label{A:linear-system}
	\partial_t u^i -\sum_{\al,\be=1}^n\sum_{j=1}^N\big[A^{ij}_{\alpha\beta}u^j_{x_\beta}\big]_{x_\alpha}
	=
	\sum_{\al=1}^n\sum_{j=1}^Nb^{ij}_{\alpha}u^j_{x_\alpha}+\sum_{\al=1}^n(f_\alpha^i)_{x_\alpha}+c^i
	\qquad\mbox{in $\Om_T$,}
\end{equation}
for $i=1,2,\dots, N$, 
where the coefficients $A^{ij}_{\alpha\beta}\colon\Om_T\to\R$ satisfy for some $0<\nu\le L$ the ellipticity and boundedness condition
\begin{equation}\label{parabolicity}
	\nu |\xi|^2\le \sum_{\al,\be=1}^n\sum_{i,j=1}^N A^{ij}_{\alpha\be}\xi^i_\alpha\xi^j_\be\le L|\xi|^2\quad\mbox{for all $\xi\in\rr^{Nn}$.}
\end{equation}
We will assume that the functions $c^i\colon\Omega_T\to\R$ belong to a parabolic Campanato-Morrey space, which is defined as follows. 
\begin{definition}\label{def-Morrey}
With $q \geq 1$, $\theta \ge 0$, a measurable map $w\colon \Omega_T\to \R^k$, $k\ge 1$, belongs
to the (parabolic) Morrey space $L^{q,\theta}(\Omega_T,\R^k)$ if and only if
\begin{equation*}
	\|w\|_{L^{q,\theta}(\Omega_T,\R^k)}^q
	:=
	\sup_{z_o\in\Omega_T,\, 0<\rho<\diam(\Omega_T)}
	\varrho^{-\theta} \iint_{\Omega_T\cap Q_\rho(z_o)} |w|^q \,\dx\dt
	<
	\infty.
\end{equation*}
\end{definition}

By $C^{0,\mu}$ we mean H\"older continuity with respect to the parabolic metric, i.e.~with H\"older exponent $\mu$ in space and $\frac{\mu}{2}$ in time.  With these notions at hand we state the following parabolic Schauder estimates, which can be proved by standard comparison and freezing techniques, cf. \cite{Campanato, Misawa, Schlag}.

\begin{theorem}\label{Thm:Schauder}
Suppose $A^{ij}_{\alpha\be}$ and $f_\alpha^i$ are in $C^{0,\mu}(\Om_T)$ for some $\mu\in(0,1)$,
whereas $b^{ij}_{\alpha}\in L^{\infty}(\Om_T)$ and $c^i\in L^{2,\theta}(\Om_T)$ for $\theta:=n+2\mu$. 
Let $u$ be a weak solution to \eqref{A:linear-system} under the assumption \eqref{parabolicity}.
Then $Du\in C^{0,\mu}_{\loc}(\Om_T)$ and moreover for any compact set $K\Subset\Om_T$ we have
\begin{equation*}
	[Du]_{\mu,K}\le C\big[\|Dv\|_{L^2(\Om_T)}+M\big],
\end {equation*}
where $C$ depends on $n$, $\nu$, $L$ and $\dist(K,\Om_T)$,
and $M$ depends on the norms of $A^{ij}_{\alpha\be}$, $b^{ij}_{\alpha}$, $f_\be^i$, and $c^i$
in their corresponding spaces.
\end{theorem}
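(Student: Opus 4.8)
The plan is to prove the estimate by the classical freezing-and-comparison method in parabolic Campanato--Morrey spaces, which is purely local: fixing a cylinder $Q_{R}(z_o)\Subset\Omega_T$, it suffices to bound the parabolic H\"older seminorm $[Du]_{\mu,Q_{R/2}(z_o)}$ in terms of $\|Du\|_{L^2(\Omega_T)}$ and the data norm $M$ collecting $\|A\|_{C^{0,\mu}}$, $\|b\|_{L^\infty}$, $\|f\|_{C^{0,\mu}}$ and $\|c\|_{L^{2,\theta}}$. Given an arbitrary point $z_1=(x_1,t_1)\in Q_{R/2}(z_o)$ and a radius $r\le R/2$, I would freeze the leading coefficients at $z_1$ and let $v$ solve the homogeneous constant-coefficient system
\begin{equation*}
\partial_t v^i-\sum_{\alpha,\beta=1}^{n}\sum_{j=1}^{N}A^{ij}_{\alpha\beta}(z_1)\,v^j_{x_\beta x_\alpha}=0
\quad\text{in }Q_r(z_1),\qquad v=u\ \text{on }\partial_p Q_r(z_1).
\end{equation*}
Since \eqref{parabolicity} is a genuine (coercive) ellipticity condition and the coefficients are constant, $v$ is smooth in the interior, and the usual interior estimates for constant-coefficient parabolic systems yield, for $\rho\le r$, the Campanato decay
\begin{align*}
\iint_{Q_\rho(z_1)}\big|Dv-(Dv)_{z_1,\rho}\big|^2\,\dx\dt
&\le
c\Big(\tfrac{\rho}{r}\Big)^{n+4}\iint_{Q_r(z_1)}\big|Dv-(Dv)_{z_1,r}\big|^2\,\dx\dt,\\
\iint_{Q_\rho(z_1)}|Dv|^2\,\dx\dt
&\le
c\Big(\tfrac{\rho}{r}\Big)^{n+2}\iint_{Q_r(z_1)}|Dv|^2\,\dx\dt,
\end{align*}
with $c=c(n,N,\nu,L)$.

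The core of the argument is a comparison (Caccioppoli) estimate for $w:=u-v$. The function $w$ lies in $L^2$ in time with values in $W^{1,2}_0(B_r(x_1),\R^N)$, vanishes on the initial slice of $Q_r(z_1)$, and solves the system with source
\begin{equation*}
\sum_{\alpha}\Big[\big(A^{ij}_{\alpha\beta}(z_1)-A^{ij}_{\alpha\beta}\big)u^j_{x_\beta}+\big(f^i_\alpha-f^i_\alpha(z_1)\big)\Big]_{x_\alpha}
+\sum_{\alpha}b^{ij}_{\alpha}u^j_{x_\alpha}+c^i,
\end{equation*}
where the constants $A(z_1)$ and $f(z_1)$ are inserted for free since the divergence of a constant vanishes. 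Testing with $w$ (justified via Steklov averaging in time, using $w\equiv0$ on the bottom slice), using the parabolic H\"older continuity of $A$ and $f$ so that $\|A-A(z_1)\|_{L^\infty(Q_r(z_1))}+r^{-\mu}\|f-f(z_1)\|_{L^\infty(Q_r(z_1))}\le cMr^{\mu}$, the slicewise Poincar\'e inequality $\|w\|_{L^2(Q_r(z_1))}\le cr\|Dw\|_{L^2(Q_r(z_1))}$, Young's inequality to absorb the gradient term, and the Morrey bound $\|c\|_{L^2(Q_r(z_1))}^2\le r^{\theta}\|c\|_{L^{2,\theta}}^2$ with $\theta=n+2\mu$, one arrives at
\begin{equation*}
\iint_{Q_r(z_1)}|Dw|^2\,\dx\dt
\le
cr^{2\mu}\iint_{Q_r(z_1)}|Du|^2\,\dx\dt+cM^2 r^{n+2+2\mu},
\end{equation*}
with $c=c(n,N,\nu,L,\mu,\|b\|_{L^\infty})$.

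From here the conclusion follows by three successive applications of the standard iteration lemma for functions satisfying $\phi(\rho)\le A[(\rho/r)^{\beta}+\varepsilon(r)]\phi(r)+Br^{\gamma}$ with $\beta>\gamma$ (cf.\ \cite{Giaquinta-book}). Writing $\Psi(r):=\iint_{Q_r(z_1)}|Du|^2\,\dx\dt$ and $\Phi(r):=\iint_{Q_r(z_1)}|Du-(Du)_{z_1,r}|^2\,\dx\dt$, combining the $(\rho/r)^{n+2}$-decay of $\iint|Dv|^2$ with the comparison estimate gives $\Psi(\rho)\le c[(\rho/r)^{n+2}+r^{2\mu}]\Psi(r)+cM^2r^{n+2+2\mu}$; since the perturbation $r^{2\mu}$ is below threshold for $R$ small, the iteration lemma yields $Du\in L^{2,\beta}_{\loc}(\Omega_T)$ for every $\beta<n+2$, quantitatively in terms of $\|Du\|_{L^2(\Omega_T)}$ and $M$. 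Taking $\beta=n+2-\mu$ and inserting into $\Phi(\rho)\le c(\rho/r)^{n+4}\Phi(r)+cr^{2\mu}\Psi(r)+cM^2r^{n+2+2\mu}$ (which uses the $(\rho/r)^{n+4}$-decay for $v$ and that averages minimize the quadratic deviation) produces $\Phi(\rho)\le c\widetilde M^2\rho^{n+2+\mu}$ locally, hence by Campanato's embedding \cite{Campanato} $Du\in C^{0,\mu/2}_{\loc}(\Omega_T)\subset L^\infty_{\loc}(\Omega_T)$. Finally, $Du\in L^\infty_{\loc}$ upgrades the Morrey bound to $\Psi(r)\le c\|Du\|_{L^\infty}^2 r^{n+2}$; feeding this back into the $\Phi$-inequality and using $n+4>n+2+2\mu$ (valid since $\mu<1$) gives $\Phi(\rho)\le cM_{*}^2\rho^{n+2+2\mu}$ locally with $M_*\le c(\|Du\|_{L^2(\Omega_T)}+M)$, which is precisely the Campanato characterization of $Du\in C^{0,\mu}_{\loc}$ and yields the asserted seminorm bound once one records, as usual, that the averages $(Du)_{z_1,\rho}$ converge to the Lebesgue value of $Du$ at $z_1$ with oscillation controlled by $\Phi$.

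I expect the main obstacle to lie in the comparison/energy estimate for $w=u-v$: making the time-derivative term rigorous on parabolic cylinders (Steklov averaging, exploiting $w\equiv0$ on the bottom slice), and bookkeeping that every error term produced by unfreezing the coefficients, the divergence source $f$, the lower-order term $b$, and the Morrey datum $c$ either carries the critical exponent $n+2+2\mu$ or reduces to the harmless self-improving perturbation $r^{2\mu}\Psi(r)$. Once that estimate is in hand, the constant-coefficient decay estimates are classical and the Campanato iteration is entirely routine.
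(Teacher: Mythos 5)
Your proposal is correct and is precisely the ``standard comparison and freezing technique'' that the paper itself invokes for Theorem~\ref{Thm:Schauder} (the paper gives no proof, only the references \cite{Campanato, Misawa, Schlag}): freezing the principal coefficients, the Caccioppoli-type comparison estimate with the error terms carrying $r^{2\mu}\Psi(r)$ and $r^{n+2+2\mu}$, and the Campanato bootstrap $L^{2,n+2-\mu}\to C^{0,\mu/2}\to L^\infty\to C^{0,\mu}$ all match the intended argument. The only cosmetic remarks are that the resulting constant also depends on $\mu$, $[A]_{C^{0,\mu}}$ and $\|b\|_{L^\infty}$ through the smallness threshold for the radii (consistent with how $M$ enters the statement), and that $\|Dv\|_{L^2}$ in the theorem is a typo for $\|Du\|_{L^2}$, which you interpreted correctly.
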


\subsection{Flattening of the boundary}
Before we start with the actual construction of local boundary coordinates, we introduce a few abbreviations. 
By $B_\delta^{(n-1)}\equiv B_\delta^{(n-1)}(0)$ we denote the ball of radius $\delta>0$ centered at the origin in $\R^{n-1}$. 
Then, for $\eta>0$ we define $\bC_{\delta,\eta}:=B_\delta^{(n-1)}\times (-\eta,\eta)$, and similarly $\bC_{\delta,\eta}^{+}:= B_\delta^{(n-1)}\times (0,\eta)$
and $\bC_{\delta,\eta}^{-}:= B_\delta^{(n-1)}\times (-\eta,0)$. Cylinders in $\R^{n+1}$ of height $T>0$
with base $\bC_{\delta,\eta},\,\bC_{\delta,\eta}^\pm $ are denoted by $\bQ_{\delta,\eta}$, $\bQ_{\delta,\eta}^\pm$, so that 
$ \bQ_{\delta,\eta}:= \bC_{\delta,\eta}\times (0,T)$.

Since $\partial\Omega_\eps$ is a smooth closed $(n-1)$-dimensional
submanifold of $\R^n$, it can locally be written as graph of a smooth function $\phi\in C^\infty(B_\delta^{n-1})$ after a suitable rigid motion.
More precisely, for any point
$y_o\in \partial \Omega_\eps\cap B_\rho(x_o)$,
there is a neighboorhood $N_o$ of $y_o$ so that
$\Omega_\eps\cap N_o=\Phi(\bC_{\delta,\eta}^-)$ with the
parametrization 
$\Phi\colon \bC_{\delta,\eta}\to N_o\subset\R^n$
defined by
\begin{equation}\label{def:Phi}
  \Phi(y',y_n):= \big(y',\phi(y')\big)+ \nu\big( y',\phi(y')\big) y_n,\quad \mbox{for $y'\in B_\delta ^{n-1}$ and $y_n\in(-\eta,\eta)$,}
\end{equation}
where $\nu\colon \partial\Omega_\eps\to\R^n$ denotes the outward unit normal on
$\partial\Omega_\eps$. By another rigid motion we can achieve that $y_o=0$ and
$\nu(0)=e_n$. 
The inverse mapping 
$
	\Psi:=\Phi^{-1}\colon N_o
	\to \bC_{\delta,\eta}
$
is given by
\begin{equation}\label{eq:Psi}
	\Psi (x)= \Big(x_1-\boldsymbol d_{x_1}(x)\boldsymbol d(x),\dots,
	x_{n-1}-\boldsymbol d_{x_{n-1}}(x)\boldsymbol d(x),
        \boldsymbol d(x)\Big)
        \qquad\mbox{for }x\in N_o,
\end{equation}
where $\boldsymbol d$ denotes the signed distance to
$\partial\Omega_\eps$.       
A straightforward computation yields 
\begin{equation}\label{DPsi(0)}
  D\Psi(0)=\mathrm{id}_{\R^{n}},
\end{equation}
and
\begin{equation}\label{def:Q}
	\mathbf Q(x)
	:=
	D\Psi(x)^t\cdot D\Psi(x)
	 =
	\left[
	\begin{matrix}
	\big( D\Psi_\alpha(x)\cdot D\Psi_\beta(x)\big)_{1\le \alpha,\beta\le n-1}& 0 \\[5pt]
	0 & 1  \\
	\end{matrix}
	\right].
\end{equation}
For a more detailed derivation of these properties, we refer to
\cite[Section~5.1]{BDMS}.
In what follows, we use the short-hand notations
\begin{equation}\label{def:Q2}
  \mathbf{Q}_x(\xi,\zeta):=\sum_{i=1}^N\sum_{\alpha,\beta=1}^n
  \mathbf{Q}_{\alpha\beta}(x)\xi_\alpha^i\zeta_\beta^i
  \qquad\mbox{and}\qquad
  |\xi|_{\mathbf{Q}_x}
  :=\sqrt{ \mathbf Q(x)(\xi,\xi)},
\end{equation}
for matrices $\xi,\zeta\in\R^{Nn}$.
Now we define
\begin{equation}
	\w(y,t):=u_\eps \big(\Phi (y),t\big)
	\quad\iff\quad 
	u_\eps(x)=\w\big(\Psi (x),t\big),
\end{equation}
for $y\in \bC_{\delta,\eta}^-$ and $t\in[0,T]$, and analogously
\begin{equation*}
  \widehat\varphi(y,t):=\varphi\big(\Phi (y),t\big)
  \quad\iff\quad \varphi(x,t)=\widehat\varphi\big(\Psi (x),t\big)
\end{equation*}
for any $\varphi\in L^p(0,T;W^{1,p}_0(\Omega_\eps,\R^N))$.
Then, 
$\w\in L^p(0,T;W^{1,p}( \bC_{\delta,\eta}^-))$ and
$\w=0$  in the sense of traces on $B_\delta^{(n-1)}\times \{0\}\times(0,T)$.
For the
derivatives in spatial directions, we have 
\begin{align*}
	Du_\eps(x,t)\cdot D\varphi(x,t)
	&=
	\mathbf Q_x\big( D\w \big(\Psi (x),t\big),D\widehat \varphi \big(\Psi (x),t\big)\big).
\end{align*}
Moreover, for a.e.~$x\in N_o$ and $t\in(0,T)$ we have
\begin{align*}
  u_\eps(x,t)\cdot \partial_t\varphi(x,t)
  =
  \w(\Psi(x),t)\cdot \partial_t\widehat\varphi(\Psi(x),t).
\end{align*}
Using the two preceding formulae and applying the transformation
$x=\Phi(y)$ on a fixed time slice, we infer 
\begin{align*}\nonumber
  &\int_{\Phi (\bC_{\delta,\eta}^- )\times\{t\}}\big[u_\eps\cdot\partial_t\varphi
  -
  \mathbf a_\eps\big( |Du_\eps|\big) Du_\eps\cdot D\varphi\big] \,\dx\\\nonumber
  &\qquad=
   \int_{\bC_{\delta,\eta}^-\times\{t\}}
  \big[\w\cdot\partial_t\widehat\varphi
  -
  \mathbf a_\eps\big( |D\w|_{\mathbf{Q}_\Phi}\big)
	\mathbf Q_\Phi\big( D\w ,D\widehat \varphi \big)\big] J_n\Phi\, \dy,
\end{align*}
for a.e.~$t\in(0,T)$, where $J_n\Phi:=|\det D\Phi|$ denotes the Jacobian of $\Phi$. Integrating this identity with respect to  $t\in(0,T)$,
we obtain the left-hand side of~\eqref{eq-ueps}. 
Diminishing $\eta>0$ if necessary, we can achieve that the right
hand side $b_\eps$ in \eqref{eq-ueps} vanishes in a tubular neighborhood of $\partial\Omega_\eps\times (0,T)$ by construction, cf.~\eqref{b-compact-support}.
Consequently,~\eqref{eq-ueps} turns into
\begin{equation}\label{equation:u-hat}
  \iint_{\bQ_{\delta,\eta}^-}\Big[\w\cdot\partial_t\widehat\varphi
  -
  \mathbf a_\eps\big( |D\w|_{\mathbf{Q}_\Phi}\big)
  \mathbf Q_\Phi\big( D\w ,D\widehat \varphi \big)\Big] J_n\Phi\, \dy\dt
  =
  0.
\end{equation}
In this equation, the testing function $\widehat\varphi$ can be chosen
as an arbitrary smooth function with compact support in
$\bQ_{\delta,\eta}^-$. By an approximation argument, we
can also verify it for every $\widehat\varphi\in L^p\big(0,T;W^{1,p}_0(
\bC_{\delta,\eta}^-)\big)$ with $\widehat\varphi_t\in
L^2(\bQ_{\delta,\mu}^-)$ and $\widehat\varphi(0)=0=\widehat\varphi(T)$.
Next, for an arbitrary testing function $\psi\in L^p\big(0,T;W^{1,p}_0(
\bC_{\delta,\eta}^-)\big)$ with $\psi_t\in L^2(\bQ_{\delta,\eta}^-)$ and $\psi(0)=0=\psi(T)$, we test  \eqref{equation:u-hat}
with
$\widehat\varphi := (J_n\Phi)^{-1}\psi$, which is admissible
since $J_n\Phi$ is a positive Lipschitz function. This leads to 
\begin{align}\label{transformed-Euler}\nonumber
  &\iint_{\bQ_{\delta,\eta}^-}\Big[\w\cdot\partial_t\psi
  -
  \mathbf a_\eps\big( |D\w|_{\mathbf{Q}_\Phi}\big)
  \mathbf Q_\Phi\big( D\w ,D\psi \big) \Big]\, \dy\dt\\
  &\qquad=
  -\iint_{\bQ_{\delta,\eta}^-}\mathbf a_\eps\big( |D\w|_{\mathbf{Q}_\Phi}\big)
  \mathbf Q_\Phi\big(D\w ,\psi\otimes D[J_n\Phi]\big)(J_n\Phi)^{-1}\dy\dt,
\end{align}
for every 
$\psi\in L^p\big(0,T;W^{1,p}_0(\bC_{\delta,\eta}^-)\big)$ with $\psi_t\in
L^2(\bQ_{\delta,\eta}^-)$ and $\psi(0)=0=\psi(T)$.

\subsection{Reflection and reduction to the interior}
Next, we extend $\mathbf Q_\Phi$ and $J_n\Phi$ to
$\bC_{\delta,\eta}^+$ by an even reflection across
$\Gamma_\delta:=B_\delta^{(n-1)}\times\{0\}$. To this aim we define
$$
	\mbox{$\mathbf Q_{\Phi (y',y_n)}:= \mathbf Q_{\Phi (y',-y_n)}\,$
	and
	$J_n\Phi (y',y_n):=J_n\Phi (y',-y_n)\,$ 
	for any $(y',y_n)\in \bC_{\delta,\eta}^+$.}
$$
Note that the functions $\mathbf{Q}_\Phi$ and $J_n\Phi$
are smooth on $B_\delta^{(n-1)}\times (-\eta,0]$, and therefore their
extensions are also smooth on $\Gamma_\delta$.
However, the extensions are in general only Lipschitz continuous on
$\bC_{\delta,\eta}$.
Only the horizontal derivatives
of the extended Jacobian are continuous across $\Gamma_\delta$,
since they are even functions as the Jacobian itself.
Next, we extend the solution $\w$ by an odd reflection
across the boundary $\Gamma_\delta$ on each time-slice. More
precisely, we let
$$
	\mbox{$\w (y',y_n,t):=-\w (y',-y_n,t)\, $ for 
$(y',y_n)\in \bC_{\delta,\eta}^+$.}
$$
Now we consider testing functions $\psi\in
L^p\big(0,T;W^{1,p}_0(\bC_{\delta,\eta})\big)$ with $\partial_t\psi\in
L^2(\bQ_{\delta,\eta})$ and $\psi(0)=0=\psi(T)$. We decompose $\psi=\psi_{\rm e}+\psi_{\rm o}$
into its even part $\psi_{\rm e}$ and odd part $\psi_{\rm o}$
with respect to reflection across
$\Gamma_\delta$. 
According to  $\bQ_{\delta,\eta}= \bQ_{\delta,\eta}^+\cup \bQ_{\delta,\eta}^-$ we write
\begin{align*}
	\mathbf I:=
  	\iint_{\bQ_{\delta,\eta}} 
	\Big[\w\cdot\partial_t\psi - 
	\mathbf a_\eps\big( |D\w|_{\mathbf Q_\Phi}\big) 
	\mathbf Q_\Phi\big( D\w ,D\psi \big) \Big]\, \dy\dt
	=
	\mathbf I_{\rm e}^+ + \mathbf I_{\rm e}^- +
	\mathbf I_{\rm o}^+ + \mathbf I_{\rm o}^-.
\end{align*}
The right-hand side integrals are defined as follows: For any sign $\{ +,-\}$ and any symmetry type $\{{\rm e},{\rm o}\}$
one has to replace $\bQ_{\delta,\eta}, \psi$ in $\mathbf I$ by the corresponding half cylinder $\{ \bQ_{\delta,\eta}^+, \bQ_{\delta,\eta}^-\}$
and the corresponding even or odd part $\{\psi_{\rm e},\psi_{\rm o}\}$ of $\psi$. 
In the last two terms, we observe that 
$\mathbf Q_\Phi( D\w ,D\psi_{\rm o})$ is an even function with respect to $y_n$ 
because the derivatives of $\w$ and $\psi_{\rm o}$ in direction of $y_i$ with $i\in\{1,\dots ,n-1\}$ are
odd and the derivatives in the direction
of $y_n$ are even. Furthermore, the structure of $\mathbf Q$ from \eqref{def:Q}
does not lead to mixed terms with both types of derivatives. 
For the same reason, $|D\w|_{\mathbf Q_\Phi}$ is an even function, and
by definition we have that 
$\w\cdot\partial_t\psi_{\mathrm{o}}$ is even as well. Consequently,
the integrands of the last two integrals are even, which implies
 $\mathbf I_{\rm o}^-=\mathbf I_{\rm o}^+$.
Similarly, using the facts that $\w$ is odd and $\psi_{\rm e}$ is
even, we deduce that $\mathbf I_{\rm e}^-=-\mathbf
I_{\rm e}^-$.
Therefore, we obtain 
\begin{align}\label{Euler-LHS}
	\iint_{\bQ_{\delta,\eta}}&\Big[\w\cdot\partial_t\psi-
        \mathbf a_\eps\big(|D\w|_{ \mathbf Q_\Phi}\big)
	\mathbf Q_\Phi\big( D\w ,D\psi \big) \Big] \dy\dt
        \\
        \nonumber
	&
	=2\iint_{\bQ_{\delta,\eta}^-}\Big(\w\cdot\partial_t\psi_o-
        \mathbf a_\eps\big(|D\w|_{ \mathbf Q_\Phi}\big)
	\mathbf Q_\Phi\big( D\w ,D\psi_{\rm o} \big)
        \Big) \dy\dt.
\end{align}
Note that the right-hand side coincides with the left-hand side
of~\eqref{transformed-Euler} with $\psi_{\rm o}$ in place of $\psi$.
Analogously to the decomposition of $\mathbf I$, we write
\begin{align*}
  \iint_{\bQ_{\delta,\eta}}&
  \mathbf a_\eps\big( |D\w|_{\mathbf Q_\Phi}\big)
	\mathbf Q_\Phi\big( D\w ,\psi\otimes
        D[J_n\Phi]\big)(J_n\Phi)^{-1}\dy\dt
       =\mathbf{II}_{\rm e}^+ + \mathbf{II}_{\rm e}^- +
       \mathbf{II}_{\rm o}^+ + \mathbf{II}_{\rm o}^-.
\end{align*}
For these integrals, we can use the similar symmetry considerations as
above. Since $\psi_{\mathrm{o}}\otimes  D[J_n\Phi]$ enjoys the same
symmetry properties as $D\psi_{\mathrm{o}}$, we infer
$\mathbf{II}_{\mathbf{o}}^+=\mathbf{II}_{\mathrm{o}}^-$. Similarly, we
deduce $\mathbf{II}_{\mathrm{e}}^+=-\mathbf{II}_{\mathrm{e}}^-$.  This
implies
\begin{align}\label{Euler-RHS}\nonumber
  \iint_{\bQ_{\delta,\eta}}&
  \mathbf a_\eps\big( |D\w|_{\mathbf Q_\Phi}\big)
	\mathbf Q_\Phi\big( D\w ,\psi\otimes
        D[J_n\Phi]\big)(J_n\Phi)^{-1}\dy\dt  \\
  &=
        2\iint_{\bQ_{\delta,\eta}^-}
  \mathbf a_\eps\big( |D\w|_{\mathbf Q_\Phi}\big)
	\mathbf Q_\Phi\big( D\w ,\psi_{\mathrm{o}}\otimes
        D[J_n\Phi]\big)(J_n\Phi)^{-1}\dy\dt.
\end{align}
Note  that $\psi_{\rm o}=0$  on
$\Gamma_\delta$, which makes $\psi_{\rm o}$
admissible in the transformed parabolic system
\eqref{transformed-Euler}. This means that the right-hand sides
of~\eqref{Euler-LHS} and \eqref{Euler-RHS} coincide. Thus, we conclude
that the extended map $\w$ satisfies 
\begin{align}\label{Euler-extension}\nonumber
  \iint_{\bQ_{\delta,\eta}}&
    \Big[\w\cdot\partial_t\psi-\mathbf a_\eps\big(|D\w|_{ \mathbf Q_\Phi}\big)
	\mathbf Q_\Phi\big( D\w ,D\psi \big) \Big]\dy\dt\\
  &=
  \iint_{\bQ_{\delta,\eta}}\mathbf a_\eps\big( |D\w|_{\mathbf Q_\Phi}\big)
   \mathbf Q_\Phi\big( D\w ,\psi\otimes D[J_n\Phi]\big)(J_n\Phi)^{-1}\dy\dt,
 \end{align}
 for every $\psi\in L^p\big(0,T;W^{1,p}_0(\bC_{\delta,\eta})\big)$
 with $\partial_t\psi\in L^2(\bQ_{\delta,\eta})$ and $\psi(0)=0=\psi(T)$.
Dropping the $\Phi$ on $\mathbf{Q}$ for ease of notation,
\eqref{Euler-extension} is the weak form of  the  parabolic system
\begin{equation}\label{Euler-pointwise}
\begin{aligned}
\pl_t \w^i - &\sum_{\al,\be=1}^n\big[\mathbf a_\eps(|D\w|_{{\bf Q}}){\bf Q}_{\al\be}\w^i_{y_\al}\big]_{y_\be}\\
&=\sum_{\al,\be=1}^n\mathbf a_\eps(|D\w|_{\bf Q}){\bf Q}_{\al\be}\w^i_{y_\al} \frac{[J_n\Phi]_{y_\be}}{(J_n\Phi)}
\quad\mbox{ in $\mathcal{Q}_{\dl,\eta}$.}
\end{aligned}
\end{equation}

 \subsection{Smoothness of $u_\eps$ up to the lateral boundary}
%
%
%

We first observe that $\mathbf Q_{\Phi(0)}(\xi,\xi)=|\xi|^2$, since $D\Psi (0)={\rm id}_{\rr^n}$ by~\eqref{DPsi(0)}. By shrinking $\delta$ and $\eta$ if necessary, we can  achieve 
\begin{equation}\label{bounds-norm}
	\mbox{$\tfrac12|\xi|\le  |\xi|_{\mathbf{Q}_{\Phi(y)}}\le 2|\xi|$ for any $\xi\in\rr^{Nn}$
	  and $y\in \bC_{\delta,\eta}$, and 
	 $\mathrm{Lip}\Big(\mathbf{Q}_\Phi\big|_{\bC_{\delta,\eta}}\Big)\le\Lambda$}
\end{equation}
for some universal constant $\Lambda<\infty$. 
This implies that 
assumptions (1.7) -- (1.9) from  \cite{Tolksdorf:1983} are
fulfilled if we replace the functions $b$, $q(\xi)$ used by Tolksdorf 
by the functions $\mathbf{Q}$, $|\xi|_{\mathbf{Q}_x}^2$ defined in
\eqref{def:Q2}. Similarly, we have
\begin{equation}\label{bounds-Jacobian}
  	\tfrac12\le J_n\Phi(y)\le 2
	\mbox{ for  any $y\in \bC_{\delta,\eta}$,} 
	\quad \mbox{and}\quad 
	\mathrm{Lip}\Big( J_n\Phi \big|_{\bC_{\delta,\eta}}\Big)\le\Lambda
\end{equation}
for some universal constant $\Lambda<\infty$. 
Furthermore, the estimates \eqref{growth-a-delta} for the coefficients
$\mathbf a_\eps (t)$ imply that assumptions (1.10)--(1.12) from
Tolksdorf \cite{Tolksdorf:1983} hold true. For the inhomogeneous term, we observe that
\begin{equation*}
	a^i(x,\xi)=\sum_{\al,\be=1}^n\mathbf{a}_\eps(|\xi|_{\mathbf{Q}})\mathbf{Q}_{\al\be}\xi^i_{\al}\frac{[J_n\Phi]_{y_\be}}{J_n\Phi}.
\end{equation*}
Again, by \eqref{bounds-norm} and \eqref{bounds-Jacobian}, we will find the desired positive constant
in order to verify (1.13) from  \cite{Tolksdorf:1983}. Having arrived at this stage we can apply the $C^{1,\alpha}$-regularity results from
\cite{DB}. Indeed, as pointed out by DiBenedetto in the monograph \cite[Chapter~VIII.7]{DB}, the statement of \cite[Chapter~IX, Theorem~1.1]{DB} continues to hold under these assumptions. The application of the theorem yields $D_y\w\in C^{0,\al}_{\loc}(\mathcal{Q}_{\dl,\eta})$.
Hence $u_\varep$ enjoys the same degree of regularity in the vicinity of $(\pl\Om_\varep\cap B_\rho(x_o))\times(0,T)$.
A further application of the interior regularity from
\cite[Chapter~IX]{DB} directly to $u_\varep$ gives 
$Du_\varep\in C^{0,\al}(\overline{\Om_\varep \cap B_\rho(x_o)}\times [\tau,T])$ for some $\tau>0$.

Up to now, all the above regularity results also hold for the
degenerate or singular case, 
and solutions cannot be expected to be more regular in this case.
However, since the regularized problem is non-degenerate, we can show
higher regularity of solutions. We begin by noting that a standard
application of the difference quotient technique yields the weak
differentiability of $V_\lambda(D\w)=(\lambda^2+|D\w|^2)^{\frac{p-2}{4}}D\w$
with $D[V_\lambda(D\w)]\in L^2_{\loc}(\mathcal{Q}_{\dl,\eta},\R^{Nn})$;
see for instance \cite[Lemma 5.1]{DMS} and
\cite[Thm. 1.1]{Scheven:2010} for the cases $p\ge2$ and
$\frac{2n}{n+2}<p<2$, respectively.
By using the fact $\lambda>0$ in the case $p>2$ and the local
boundedness of $|D\w|$ if $p<2$, we deduce that the second spatial
derivatives of the solution satisfies
$D^2_y\w\in L^2_{\loc}(\mathcal{Q}_{\dl,\eta},\R^{Nn})$.

Having second spatial derivates in $L^2_{\loc}$ and first spatial derivatives locally bounded, we are allowed to perform an integration by parts in \eqref{Euler-extension} in the diffusion term. After that we shift all terms except the one containing the time derivative to the right-hand side. In this way we obtain an estimate of the form 
\begin{equation}\label{time-deriv}
  	\bigg|\iint_{\bQ_{\delta,\eta}} \w\cdot\partial_t\psi \,\dy\dt\bigg|
    \le 
    C \|\psi\|_{L^2(\bQ_{\delta,\eta})}
\end{equation}
for any $\psi\in C_0^\infty (\bQ_{\delta,\eta},\R^N)$. This implies that $\partial_t\w \in L^2_{\loc}(\mathcal{Q}_{\dl,\eta})$.

The main ingredient for the higher regularity are the Schauder
estimates for
linear parabolic systems stated in Theorem~\ref{Thm:Schauder}. 
We begin by differentiating \eqref{Euler-extension} in tangential
directions, i.e.~with respect to $y_\ell$ for $\ell=1,2,\dots, n-1$.
As before we omit the $\Phi$ on $\mathbf{Q}$.
Since $D^2_y \w\in L^2_{\loc}(\mathcal{Q}_{\dl,\eta})$, we infer that
$v:=\w_{y_\ell}$ is a weak solution to the following parabolic system:
\begin{equation}\label{Schauder-system}
	\partial_t v^i -
	\sum_{\al,\be=1}^n\sum_{j=1}^N\big[A^{ij}_{\alpha\be}v^j_{y_\alpha}\big]_{y_\be}
	=
	\sum_{\al=1}^n\sum_{j=1}^Nb^{ij}_{\alpha}v^j_{y_\alpha} +
	\sum_{\al=1}^n(f^i_\alpha)_{y_\alpha} + c^i
\end{equation}
in $\mathcal{Q}_{\dl,\eta}$ and for $i=1,\dots, N$, where the coefficients are given by
\begin{align}\label{def_A}
	A^{ij}_{\alpha\be}
	&:=
	\mathbf{a}_\eps(|D\w|_{\mathbf{Q}})\mathbf{Q}_{\alpha\be}\delta^{ij} +
	\frac{\mathbf{a}_\eps^\prime(|D\w|_{\mathbf{Q}})}{|D\w|_{\mathbf{Q}}}
	\sum_{\gamma,\delta=1}^n
	\mathbf{Q}_{\alpha\gm} \mathbf{Q}_{\be\delta} \w^i_{y_\gm}\w^j_{y_\delta} , 
\end{align}
and
\begin{align}\label{def_b}
	b^{ij}_{\alpha}
	&:=
	\sum_{\beta=1}^n 
	\frac{[J_n\Phi]_{y_\be}}{J_n\Phi}
	\bigg[\mathbf{a}_\eps(|D\w|_{\mathbf{Q}})\mathbf{Q}_{\alpha\be}\dl^{ij} +
	\sum_{\gamma, \delta=1}^n 
	\frac{\mathbf{a}_\eps^\prime(|D\w|_{\mathbf{Q}})}{|D\w|_{\mathbf{Q}}}
	\mathbf{Q}_{\alpha\delta} \mathbf{Q}_{\be\gamma}
	\w^i_{y_\gamma}\w^j_{y_\delta}\bigg].
\end{align}
The inhomogeneities are defined by 
\begin{align*}
	f_i^\alpha
	&:=
	\sum_{\beta,\gamma,\delta=1}^n \sum_{k=1}^N 
	\frac{\mathbf{a}_\eps^{\prime}(|D\w|_{\mathbf{Q}})}{2|D\w|_{\mathbf{Q}}}
	[\mathbf{Q}_{\gm\dl}]_{y_\ell}\mathbf{Q}_{\al\be}
	\w^i_{y_\beta} \w^k_{y_\gm}\w^k_{y_\dl} +
	\sum_{\beta=1}^n 
	\mathbf{a}_\eps(|D\w|_{\mathbf{Q}})
	[\mathbf{Q}_{\al\be}]_{y_\ell}\w^i_{y_\beta}
\end{align*}
and
\begin{align*}
	c^i
	&:=
	\sum_{\alpha, \beta=1}^n\mathbf{a}_\eps(|D\w|_{\mathbf{Q}})
	\bigg[
	\frac{\mathbf{Q}_{\al\be}[J_n\Phi]_{y_\be}}{J_n\Phi}\bigg]_{y_\ell}
	\w^i_{y_\al} \\
	&\quad\ \, +
	\frac{[J_n\Phi]_{y_\be}}{J_n\Phi}
	\frac{\mathbf{a}_\eps^\prime(|D\w|_{\mathbf{Q}})}{2|D\w|_{\mathbf{Q}}}
	\sum_{\alpha, \beta, \gamma, \delta=1}^n
	[\mathbf{Q}_{\gm\dl}]_{y_\ell} \mathbf{Q}_{\al\be}
	\w^i_{y_\al}\w^j_{y_\gm}\w^j_{y_\dl} .
\end{align*}
Note that the derivatives $(J_n\Phi)_{y_\ell}$ and $\mathbf{Q}_{y_\ell}$
are Lipschitz continuous on the whole domain $B_{\dl}\times(-\eta,\eta)$ for any $\ell=1,2,\dots,n-1$.
According to the $C^{1,\al}$-regularity of $\w$, the coefficients
$A^{ij}_{\alpha\be}$ 
and the term $f_i^\alpha$ appearing in \eqref{Schauder-system}
are H\"older continuous, while the coefficients $b^{ij}_{\alpha}$
and the inhomogeneities $c^i$ are bounded.
Moreover, for any $\xi\in\rr^{Nn}$, by \eqref{growth-a-delta}$_{1,2}$ we have that 
\[
\sum_{\al,\be=1}^n\sum_{i,j=1}^N A^{ij}_{\alpha\be}\xi^i_\alpha\xi^j_\be\ge\tfrac{m}{c}\big(\varep+|D\w|^2_{\mathbf{Q}}\big)^{\frac{p-2}2}|\xi|^2.
\]
Consequently the interior Schauder estimates from Theorem~\ref{Thm:Schauder} yield the H\"older continuity of the spatial gradient 
$Dv$ for some proper H\"older exponent. In particular, $\w_{y_\alpha y_\beta}$
is locally H\"older continuous on $\mathcal{Q}_{\dl,\eta}$, provided $\alpha+\beta<2n$.

Likewise, we may differentiate \eqref{Euler-pointwise} with respect to $t$.
This procedure becomes legitimate if we can show $D_{y}\partial_t \w\in L^2_{\loc}(\mathcal{Q}_{\dl,\mu})$.
Thanks to \eqref{time-deriv}, this can be done  by working with the difference quotient of $w$ in the time variable.
Indeed, let $h>0$ and define the finite difference in time by
\[
	\tau_h \w(t):=\w(t+h)-\w(t).
\]
Here and in the sequel we keep silent of the dependence of $\w$ on $x$.
Taking finite differences in the time variable of \eqref{Euler-pointwise} we obtain
that the parabolic system
\begin{equation}\label{diff-quo}
\begin{aligned}
	\tau_h\pl_t \w^i -
	&
	\sum_{\al,\be=1}^n \Big[\tau_h \big[{\bf a}_\eps(|D\w|_{{\bf Q}}){\bf Q}_{\al\be}\w^i_{y_\al}\big]\Big]_{y_\be}\\
	&=
	\sum_{\al,\be=1}^n \tau_h\big[{\bf a}_\eps(|D\w|_{\bf Q}){\bf Q}_{\al\be}\w^i_{y_\al}\big] 
	\frac{[J_n\Phi]_{y_\be}}{J_n\Phi},
\end{aligned}
\end{equation}
is satisfied weakly in  $\mathcal{Q}_{\dl,\eta}$.
Next, for fixed $t$ and $h$, we introduce the quantity
\[
\Dl(s):=s D\w(t+h)+(1-s)D\w(t)\in\R^{Nn}
\]
whose entries are $
\Dl_\al^i(s):=s\w_{y_\al}^i(t+h)+(1-s)\w_{y_\al}^i(t),
$
and calculate
\begin{align*}
	\tau_h\big[ {\bf a}_\eps&(|D\w|_{{\bf Q}}){\bf Q}_{\al\be}\w^i_{y_\al}\big]\\
	&= 
	\tau_h \w^{j}_{y_\al}
	\int_0^1\left[{\bf a}_\eps\big(|\Dl(s)|_{\bf Q}\big)
	{\bf Q}_{\al\be}\dl^{ij} +
	\frac{{\bf a}_\eps^{\prime}(|\Dl(s)|_{\bf Q})}{|\Dl(s)|_{\bf Q}}
	{\bf Q}_{\al\gm}\Dl^j_\gm(s){\bf Q}_{\be\dl}\Dl^i_\dl(s)\right]\d s\\
	&=:
	\tau_h \w^{j}_{y_\al} \mathcal{A}_{\al\be}^{ij},
\end{align*}
for $\beta=1,\ldots,n$ and $i=1,\ldots,N$. 
It is not hard to verify that the matrix $\mathcal{A}_{\al\be}^{ij}$
satisfies
\begin{align*}
&\sum_{\al,\be=1}^n\sum_{i,j=1}^N\mathcal{A}^{ij}_{\alpha\be}\xi^j_\alpha\xi^i_\be\ge\tfrac{m}{c}|\xi|^2\int_0^1\big(\varep^2+|\Dl(s)|^2_{\bf Q}\big)^{\frac{p-2}2}\,\d s
\ge C_o|\xi|^2
\end{align*}
and
\begin{align*}
&|\mathcal{A}^{ij}_{\alpha\be}|\le cM \int_0^1\big(\varep^2+|\Dl(s)|^2_{\bf Q}\big)^{\frac{p-2}2}\,\d s\le C_1
\end{align*}
for some  positive constants $C_o$ and $C_1$ depending on $p$, $m$, $M$, $c$, $\varep$, and $\|D\w\|_{L^\infty}$.

We may test  \eqref{diff-quo} by $\tau_h \w^i \z^2$ with $\z\in C^1_0(\mathcal{Q}_{\dl,\eta})$.
Employing the above growth conditions on $\mathcal{A}^{ij}_{\alpha\be}$ and the fact that $\partial_t \w \in L^2_{\loc}(\mathcal{Q}_{\dl,\eta})$, a standard calculation gives
\begin{align*}
	\iint_{\mathcal{Q}_{\dl,\eta}} \z^2|\tau_h D\w|^2\,\d y\d t
	\le 
	C h^2
\end{align*}
for some constant $C$ with dependence only on $C_o$, $C_1$, $\Lm$, $\|\z\|_{L^\infty}$, $\|D\z\|_{L^\infty}$, $\|\partial_t\z\|_{L^\infty}$ and $\|\partial_t \w\|_{L^2(\spt\zeta)}$
but independent of $h$.
Passing to the limit in the above estimate as $h\downarrow0$,
we conclude that $D\partial_t \w\in L^2_{\loc}(\mathcal{Q}_{\dl,\eta})$ as promised. Therefore, we may differentiate \eqref{Euler-extension} with respect to $t$ and obtain, denoting $\tilde v:=\partial_t \w$, that
\begin{equation*}
	\partial_t \tilde v^i -
	\sum_{\al,\be=1}^n\sum_{j=1}^N\big[A^{ij}_{\al\be}\tilde v^j_{y_\al}\big]_{y_\be}
	=
	\sum_{\al=1}^n\sum_{j=1}^N b^{ij}_\alpha \tilde v^j_{y_\alpha}
	\qquad\text{for $i=1,2,\dots, N$}
\end{equation*}
in $\mathcal{Q}_{\dl,\mu}$, where $A^{ij}_{\alpha\be}$ and
$b^{ij}_\alpha$ are defined in \eqref{def_A} and \eqref{def_b}, respectively.
Then the interior Schauder estimates from Theorem~\ref{Thm:Schauder} yield the local  H\"older
continuity of 
$\partial_t D\w$ on
$\mathcal{Q}_{\dl,\mu}$.


To obtain H\"older regularity for $\w_{y_n y_n}$, we turn back to
\eqref{transformed-Euler} in $\mathcal{Q}_{\dl,\mu}^-$.
Let us write it in non-divergence form and keep the terms with $\w^i_{y_ny_n}$
on the left-hand side, while we put all other terms on the right-hand side.
As usual, we will omit $\Phi$ on $\mathbf{Q}$.
In this way, we may obtain an algebraic, linear system
\begin{equation}\label{linear-system}
\sum_{j=1}^N B^{ij}\hat u^j_{y_ny_n}=g^i\quad\text{ in }\mathcal{Q}_{\dl,\mu}^-
\mbox{ for $i=1,2,\dots, N$,}
\end{equation}
where
\[
	B^{ij}
	=
	\mathbf{a}_\eps(|D\w|_{\mathbf{Q}})\mathbf{Q}_{nn}\dl^{ij} +
	\frac{\mathbf{a}_\eps^\prime(|D\w|_{\mathbf{Q}})}{|D\w|_{\mathbf{Q}}}
	\mathbf{Q}_{n\gm} \w^j_{y_\gm}\mathbf{Q}_{\al n}\w^i_{y_\al}
\]
and the right-hand side $g^i$ is a combination of first derivatives, second derivatives of $\w$
excluding $\w_{y_ny_n}$, together with $\mathbf{Q}$, $J_n\Phi$ and their first derivatives.
As a result, $g^i$ is  H\"older continuous for all $i=1,2,\dots, N$.
On the other hand, we observe that the matrix $(B^{ij})$ is positive definite and H\"older continuous in
the closure of $\mathcal{Q}_{\dl,\mu}^-$, provided we choose $\dl$ and $\mu$
sufficiently small. As a result, $\w^i_{y_ny_n}$ can be solved from the algebraic, linear system \eqref{linear-system},
and is also H\"older continuous in the closure of $\mathcal{Q}_{\dl,\mu}^-$. 
Hence we have shown that $\w_{y_iy_j}$ is H\"older continuous in the closure of $\mathcal{Q}_{\dl,\mu}^-$
for all $i,j=1,2,\dots,n$. Consequently, the same fact holds for $\partial_t \w$ due to the system \eqref{transformed-Euler}.
Transforming back to $u_\varep$ we obtain that $\pl_t u_\varep$ and $D^2 u_\varep$
are H\"older continuous up to the lateral boundary $\{\pl\Om_\varep\cap N_o\}\times[\tau,T]$
for some $\tau>0$.

The sketched procedure can be iterated to give even higher regularity. To this
end, we successively differentiate the linear
system~\eqref{Schauder-system} in tangential directions and with
respect to time and apply the Schauder estimate from
Theorem~\ref{Thm:Schauder}. This yields the H\"older continuity for all
derivatives expect from the ones in normal directions. The H\"older
regularity of the remaining derivatives can then be deduced from the
system~\eqref{transformed-Euler} on $\bQ_{\delta,\eta}^-$. In this
way, we inductively deduce $\w\in C^{k,\alpha}_{\loc}(\bQ_{\delta,\eta})$ for any $k\in\N$, which yields
the desired smoothness of the approximating solutions $u_\eps$.


\begin{thebibliography}{99}

\bibitem{AcerbiFusco}E. Acerbi and N. Fusco, Regularity
    for minimizers of non-quadratic functionals: the case $1<p<2$. 
    \emph{J. Math. Anal. Appl.} 140 (1989), 115--135.

  \bibitem{Acerbi-Mingione-electro}E.~Acerbi, G.~Mingione,
    Regularity results for stationary electro-rheological fluids.
    \emph{Arch. Ration. Mech. Anal.} 164 (2002), no. 3, 213--259.
    
  \bibitem{BanerjeeLewis:2014}
    A.~Banerjee and J.~Lewis,
    \newblock Gradient bounds for p-harmonic systems with vanishing Neumann
    (Dirichlet) data in a convex domain.
    \newblock \emph{Nonlinear Anal.} 100 (2014), 78--85. 

\bibitem{B-boundary}
V.~B\"ogelein,
\newblock Global gradient bounds for the parabolic p-Laplacian system. 
\newblock {\em Proc. Lond. Math. Soc. (3)} 111 (2015), no. 3, 633--680. 

\bibitem{BDMS}
	V.~B\"ogelein, F.~Duzaar, P.~Marcellini and C.~Scheven,
	\newblock Boundary regularity  for elliptic systems with $p,q$-growth.
	\newblock{\em preprint}, 2020.

\bibitem{BoeDuMin}
V.~B\"ogelein, F.~Duzaar, and G.~Mingione, 
\newblock The regularity of general parabolic systems with degenerate diffusion. \newblock {\em Mem. Amer. Math. Soc.} 221 (2013), no. 1041.

\bibitem{Campanato}
S.~Campanato,
\newblock Equazioni paraboliche del secondo ordine e spazi $\mathcal L^{2,\theta}(\Omega,\delta)$ (Italian). 
\newblock \emph{Ann. Mat. Pura Appl. (4)} 73 (1966), 55--102. 

\bibitem{Chen} Y.~Z.~Chen, H\"older continuity of the gradient of solutions of nonlinear degenerate parabolic systems, 
{\it Acta Math. Sinica (N.S.)} 2  (1986), no. 4, 309--331.


\bibitem{Chen-DiBenedetto}
Y.~Z.~Chen and E.~DiBenedetto, 
\newblock Boundary estimates for solutions of nonlinear degenerate parabolic systems. 
\newblock {\em J. Reine Angew. Math.} 395 (1989), 102--131.
 
\bibitem{Cianchi-Mazya-2}
	A.~Cianchi and V.G.~Maz'ya, 
	\newblock Global Lipschitz regularity for a class of quasilinear elliptic equations. 
	\newblock \emph{Comm. Partial Differential Equations} 36 (2011), no. 1, 100--133.

\bibitem{Cianchi-Mazya-1}
	A.~Cianchi and V.G.~Maz'ya, 
	\newblock Global boundedness of the gradient for a class of nonlinear elliptic systems. 
	\newblock \emph{Arch. Ration. Mech. Anal.} 212 (2014), no. 1, 129--177.

\bibitem{DiBenedetto:RealAnalysis}
  E.~DiBenedetto,
  \newblock \emph{Real analysis. Second edition.}
  \newblock Birkh\"auser Advanced Texts: Basel Textbooks, Birkh\"auser
  / Springer, New York, 2016.

\bibitem{DB} 
	E.~DiBenedetto, 
	\newblock Degenerate Parabolic Equations.
	\newblock Universitext, Springer-Verlag, New York, 1993. 

\bibitem{DiBenedetto-Friedman} 
E.~DiBenedetto and A.~Friedman, 
\newblock H\"older estimates for nonlinear degenerate parabolic systems. 
\newblock {\em J. Reine Angew. Math.} 357 (1985), 1--22.

\bibitem{DiBenedetto-Friedman2}
E.~DiBenedetto and A.~Friedman, 
\newblock Regularity of solutions of nonlinear degenerate parabolic systems. 
\newblock {\em J. Reine Angew. Math.}349 (1985), 83--128.

\bibitem{DiBenedetto-Friedman3}
E.~DiBenedetto and A.~Friedman, 
\newblock Addendum to: ``H\"older estimates for nonlinear degenerate parabolic systems''. 
\newblock {\em J. Reine Angew. Math.} 363 (1985), 217--220.

\bibitem{DMS}
F.~Duzaar, G.~Mingione, and K.~Steffen,
\newblock Parabolic systems with polynomial growth and regularity.
\newblock \emph{Mem. Am. Math. Soc.} 214 (2011), no. 1005.

\bibitem{DuzaarMingione:2010}
	F.~Duzaar and G.~Mingione,
	\newblock Local Lipschitz regularity for degenerate elliptic systems. 
	\newblock\emph{ Ann. Inst. H. Poincar\'e Anal. Non Lin\'eaire}  27 (2010), no. 6, 1361--1396.

\bibitem{Foss}
	\newblock M.~Foss, 
	\newblock Global regularity for almost minimizers of nonconvex variational problems. 
	\newblock \emph{Ann. Mat. Pura Appl. (4)} 187 (2008), no. 2, 263--321.

\bibitem{Foss-Passarelli-Verde}
	M.~Foss, A.~Passarelli di Napoli, and A.~Verde, 
	\newblock Global Lipschitz regularity for almost minimizers of asymptotically convex variational problems. 
	\newblock \emph{Ann. Mat. Pura Appl. (4)} 189 (2010), no. 1, 127--162.

\bibitem{Giaquinta-book}
	M.~Giaquinta, 
	\newblock \emph{Multiple integrals in the calculus of variations and nonlinear elliptic systems.} 
	\newblock Annals of Mathematics Studies, 105. Princeton University Press, Princeton, NJ, 1983.


\bibitem{GiaquintaModica:1986}
    M. Giaquinta, and G. Modica,
    \newblock Partial regularity of minimizers of quasiconvex integrals.
    \newblock \emph{Ann. Inst. H. Poincar\'e Anal. Non Lin\'eaire}
    3 (1986), no. 3, 185--208.

 \bibitem{GiaquintaModica:1986-a} 
 	 M. Giaquinta, and G. Modica,  
	\newblock Remarks on the regularity of the minimizers of certain degenerate functionals.
	\newblock{\it  Manuscripta Math. } 57 (1986), no. 1, 55--99.

  \bibitem{Grisvard:1985}    
  P.~Grisvard,
  \newblock Elliptic Problems in Nonsmooth Domains.
  Monographs and Studies in Mathematics, 24. Pitman (Advanced Publishing Program),
  Boston, MA, 1985.


\bibitem{Hamburger}
	C.~Hamburger, 
	\newblock Regularity of differential forms minimizing degenerate elliptic functionals. 
	\newblock \emph{J. Reine Angew. Math.} 431 (1992), 7--64.

\bibitem{Kinnunen-Martio}
 J.~Kinnunen and O.~Martio, 
 \newblock Hardy's inequalities for Sobolev functions. 
 \newblock \emph{Math. Res. Lett.} 4 (1997), no. 4, 489--500.
 
 
\bibitem{Lieberman}
	G.M.~Lieberman, 
	\newblock Boundary regularity for solutions of degenerate elliptic equations. 
	\newblock \emph{Nonlinear Anal.} 12 (1988), no. 11, 1203--1219.

\bibitem{Marti:book}
J.~Marti,
\newblock Konvexe Analysis.
\newblock Birkh\"auser Verlag, Basel-Stuttgart, 1977.
 
\bibitem{Misawa}
\newblock M.~Misawa, 
\newblock Existence of a classical solution for linear parabolic systems of nondivergence form. 
\newblock \emph{Comment. Math. Univ. Carolin.} 45 (2004), no. 3, 475--482. 

\bibitem{Scheven:2010}
C.~Scheven,
\newblock Regularity for subquadratic parabolic systems:
      Higher integrability and dimension estimates.
\newblock \emph{Proc. Roy. Soc. Edinburgh} 140A (2010), 1269--1308.

\bibitem{Schlag}
\newblock W.~Schlag,
\newblock Schauder and $L^p$ estimates for parabolic systems via Campanato spaces. 
\newblock \emph{Comm. Partial Differential Equations} 21 (1996), no. 7-8, 1141--1175.

\bibitem{Tolksdorf:1983}  
  P.~Tolksdorf,
  \newblock Everywhere-regularity for some quasilinear systems with a lack of ellipticity. 
  \newblock \emph{Ann. Mat. Pura Appl. (4)} 134 (1983), 241--266. 
  
\bibitem{Uhlenbeck}
	K.~Uhlenbeck, 
	\newblock Regularity for a class of non-linear elliptic systems. 
	\newblock \emph{Acta Math.} 138 (1977), no. 3-4, 219--240.

\bibitem{Uraltseva}
	N.N.~Uraltseva,
	\newblock Degenerate quasilinear elliptic systems. (Russian) 
	\newblock \emph{Zap. Nau\v cn. Sem. Leningrad. Otdel. Mat. Inst. Steklov. (LOMI)} 7 1968 184--222. 

\bibitem{Wiegner}
	M.~Wiegner,
	\newblock On $C_\alpha$-regularity of the gradient of solutions of degenerate parabolic system.
	\newblock \emph{Ann. Mat. Pura Appl. (IV).} 145 (1986)  385--405.

\end{thebibliography}
\end{document}